\documentclass[10pt]{article}
\usepackage{color}
\usepackage{amsmath}
\usepackage{graphicx}
\usepackage{footnote}
\usepackage{subscript}
\usepackage{amsfonts,color}
\usepackage{amsmath,amssymb}
\usepackage{mathtools}
\usepackage{amssymb} 
\usepackage{mathtools}
\usepackage{amsthm,wasysym}
\usepackage[english]{babel}
\usepackage{cancel}
\usepackage{colonequals}
\makeatletter
\usepackage{float}
\usepackage{nicefrac,enumitem}
\usepackage{hyperref}
\usepackage{fixltx2e}

\makeatletter
\let\@fnsymbol\@arabic
\makeatother

\newcommand{\UEN}{\overline{U}\hspace{-0.07cm}_e\hspace{-0.08cm}^{\natural}}

\usepackage{bbm}
\newcommand{\id}{{ {\mathbbm{1}}}}

\newcommand{\tr}{{\rm tr}}
\newcommand{\dev}{{\rm dev}}
\newcommand{\sym}{{\rm sym}}
\newcommand{\skw}{{\rm skew}}

\newcommand{\Curl}{{\rm Curl}}

\newcommand{\norm}[1]{\|#1\|}

\setcounter{tocdepth}{2} 

\def\dd{\displaystyle}

\setlength{\textheight}{23cm}
\setlength{\textwidth}{17cm}
\setlength{\topmargin}{-1cm}
\setlength{\oddsidemargin}{-0cm}
\setlength{\evensidemargin}{-1cm}

\setlength\arraycolsep{2pt}

\newtheorem{theorem}{Theorem}[section]

\newtheorem{proposition}[theorem]{Proposition}

\def\dd{\displaystyle}


\usepackage{multicol}
\usepackage[font=small]{caption}
\newcommand{\citet}[2][]{\citeauthor{#2} \cite[#1]{#2}}
\graphicspath{{gfx/}}
%
%
%
%

%
%


\RequirePackage{amsmath}	
\RequirePackage{mathtools}	
%
%
%
%
%

%
%
%

\newcommand{\col}{\colon}

%
%
%

\newcommand{\R}{\mathbb{R}}

\DeclareMathOperator{\SO}{SO}

\DeclareMathOperator{\so}{\mathfrak{so}}

%
%
%

%
%
%

\renewcommand{\skew}{\mathop{\mathrm{skew}}\nolimits}

\DeclareMathOperator{\anti}{anti}
\DeclareMathOperator{\axl}{axl}

%
%
%

\newcommand{\pdd}[3][]{\frac{\partial\ifx&#1&\else^{#1}\fi #2}{\partial #3}}

\DeclareMathOperator{\@macros@div}{div}
\renewcommand{\div}{\@macros@div}

\newcommand{\D}{\mathrm{D}}
%
%
%

%
%
%

\newcommand{\dynnorm}[1]{\left\lVert #1 \right\rVert}
\newcommand{\innerproduct}[1]{\langle #1 \rangle}

\newcommand{\iprod}{\innerproduct}
\newcommand{\dyniprod}[1]{\left\langle #1 \right\rangle}

%
%
%

%
%
%
\newcommand{\matr}[1]{\begin{pmatrix} #1 \end{pmatrix}}

%
%
%

%
%
%

%
%
%

%
%

%
%

%
%
%
\providecommand{\availableaturl}[2][]{%
	available at \url{#2}%
}

%
%
%
%
%




%

%

%



%
%

%
%
%
%
%
%
%
%

%
%
%

%
%
%

%
%
%

%
%
%

%
%
%
\usepackage[babel,german=quotes]{csquotes} 

\makeatletter%
\@ifpackageloaded{hyperref}{}{}%
\makeatother%


\usepackage{color}
\usepackage{amsmath}

\usepackage{footnote}
\usepackage{subscript}
\usepackage{amsfonts,color}
\usepackage{amsmath,amssymb}
\usepackage{amssymb} 
\usepackage{mathtools}
\usepackage{amsthm,wasysym}
\usepackage{framed}
\usepackage{cancel}
\usepackage{graphicx}
\usepackage{caption}
\usepackage{subcaption}
\usepackage{rotating}

\usepackage[english]{babel}
\usepackage[utf8]{inputenc}
\makeatletter
\usepackage{float}
\usepackage{wrapfig}
\restylefloat{figure}

\makeatletter
\let\@fnsymbol\@arabic
\makeatother

\usepackage{bbm}

\setcounter{tocdepth}{2} 

\def\dd{\displaystyle}

\setlength{\textheight}{23cm}
\setlength{\textwidth}{17cm}
\setlength{\topmargin}{-1cm}
\setlength{\oddsidemargin}{-0cm}
\setlength{\evensidemargin}{-1cm}

\setlength\arraycolsep{2pt}

\def\barr{\begin{array}}
	\setcounter{secnumdepth}{4}
	\setcounter{tocdepth}{2}
	\usepackage{lscape}

	\def\earr{\end{array}}
\def\bec#1{\begin{equation}\label{#1}}
\def\becn{\begin{equation*}}
\def\endec{\end{equation}}
\def\endecn{\end{equation*}}
\def\dd{\displaystyle}
\def\bfm#1{\mbox{\boldmat}}

\renewcommand{\dd}{\displaystyle}

%
%
%
%
%

%

%

%

%



\usepackage{enumitem}





\usepackage{pifont}

%

\newcommand{\imin}[1][\lambda]{m{\ifx&#1&\else(#1)\fi}}
\newcommand{\imax}[1][\lambda]{M{\ifx&#1&\else(#1)\fi}}
\newcommand{\iset}[1][\lambda]{J{\ifx&#1&\else(#1)\fi}}
%
%

%

%
%



\allowdisplaybreaks[1]

\usepackage{titletoc}

\begin{document}

\title{Explicit formula for  the Gamma-convergence homogenized quadratic  curvature energy in isotropic Cosserat  shell models}
\author{  
	Maryam Mohammadi Saem\thanks{Maryam Mohammadi Saem,  \ \  Lehrstuhl f\"{u}r Nichtlineare Analysis und Modellierung, Fakult\"{a}t f\"{u}r
		Mathematik, Universit\"{a}t Duisburg-Essen,  Thea-Leymann Str. 9, 45127 Essen, Germany, email: maryam.mohammadi-saem@stud.uni-due.de} \quad and  \quad  Emilian Bulgariu\thanks{ Emilian Bulgariu,\ Department of Exact Sciences, Faculty of Horticulture,   Ia\c si University of Life Sciences (IULS),  Aleea Mihail Sadoveanu no.3, 700490 Ia\c si,
		Romania, email:  ebulgariu@uaiasi.ro} \quad and\quad \\ Ionel-Dumitrel Ghiba\thanks{Ionel-Dumitrel Ghiba, \ \  Alexandru Ioan Cuza University of Ia\c si, Department of Mathematics,  Blvd.
		Carol I, no. 11, 700506 Ia\c si,
		Romania; and  Octav Mayer Institute of Mathematics of the
		Romanian Academy, Ia\c si Branch,  700505 Ia\c si, email: dumitrel.ghiba@uni-due.de, dumitrel.ghiba@uaic.ro}\quad and\quad 	
		    Patrizio Neff\,\thanks{Patrizio Neff,  \ \ Head of Lehrstuhl f\"{u}r Nichtlineare Analysis und Modellierung, Fakult\"{a}t f\"{u}r
		Mathematik, Universit\"{a}t Duisburg-Essen,  Thea-Leymann Str. 9, 45127 Essen, Germany, email: patrizio.neff@uni-due.de}
}

\maketitle
\begin{abstract}
	We show how to explicitly compute the homogenized curvature energy appearing in the isotropic $\Gamma$-limit for flat and for curved initial configuration Cosserat shell models, when a parental three-dimensional minimization problem on $\Omega \subset \mathbb{R}^3$ for a Cosserat energy  based on the second order dislocation density tensor $\alpha:=\overline{R} ^T \Curl\overline{R} \in \mathbb{R}^{3\times 3}$, $\overline{R}\in {\rm SO}(3)$  is used. 
\end{abstract}
\tableofcontents
\section{Introduction}\label{intro}
  The Cosserat theory  introduced  by the Cosserat brothers in 1909 \cite{Cosserat09,Cosserat08} represents a generalization of the elasticity theory. While the elasticity theory models each constituent particle of the body as a material point, i.e., it is able to model only the translation of each particle through the classical deformation $\varphi\col \Omega\subset \R^3\to \R^3$,  the Cosserat theory models the micro-rotation of each particle attaching to each material point an independent triad of orthogonal directors, the microrotation $ {\overline{R}}\col \Omega\subset \R^3\to \SO(3)$.  Invariance of the energy under superposed rigid body motions (left-invariance under $\SO(3)$) allowed them to conclude the suitable form of the energy density as  $W=W( \overline{U}, {\mathfrak{K}})$, where $\overline{U}:={\overline{R}}^T\D \varphi$ is the first Cosserat deformation tensor and $ {\mathfrak{K}}:=({\overline{R}^T\partial_{x_1} \overline{R}}, {\overline{R}^T\partial_{x_2} \overline{R}}, {\overline{R}^T\partial_{x_3} \overline{R}})$ is the second Cosserat deformation tensor.  The Cosserat brothers never considered specific forms of the elastic energy and they never linearised their model to obtain the well-known linear Cosserat (micropolar) model \cite{ghibarizzi}. In the present paper we  only consider isotropic material, i.e., the behaviour of the elastic material is modelled with the help of an additionally right-invariant  energy under $\SO(3)$. In addition we will  consider  quadratic energies in suitable strains (a physically linear dependence of the stress tensor and of the couple-stress tensor on the strain measures)  which allows an explicit and  practical \cite{Nebel,Sander} representation of the energy.
 
  In \cite{Maryam} we have provided  a nonlinear membrane-like Cosserat shell model on a curved reference configuration starting from a geometrically nonlinear, physically linear three-dimensional isotropic Cosserat model. Beside the change of metric, the obtained membrane-like Cosserat shell model \cite{Maryam} is still capable to capture the transverse shear deformation and the  {Cosserat}-curvature due to remaining Cosserat effects.  The Cosserat-shell model presented in \cite{Maryam}  for curved initial configuration generalizes the Cosserat-shell model constructed in \cite{neff2007geometrically1} for flat initial configurations.
 There are many different ways to mathematically model shells \cite{Koiter69}, e.g., the \textit{derivation approach} \cite{neff2004geometricallyhabil,neff2004geometrically,GhibaNeffPartI,GhibaNeffPartII,GhibaNeffPartIV,GhibaNeffPartIII,GhibaNeffPartV}, the \textit{intrinsic approach}  \cite{Altenbach-Erem-11,Altenbach-Erem-Review,green1965general}, the \textit{asymptotic method}, the \textit{direct approach} \cite{Naghdi69,antman2005nonlinear, cohen1966nonlinear1, cohen1966nonlinear, cohen1989mathematical, Cosserat09, ericksen1957exact, green1965general, rubin2013cosserat,birsan2019refined,boyer2017poincare,birsan2020derivation,birsan2023}). However, {\it Gamma-convergence} methods are  preferred in the mathematical community. 
 
 When the Cosserat parental  three-dimensional energy is considered, in the deduction of the Gamma-limit for the curved initial configuration  we have to construct four homogenized energies, while in the expression of the Gamma-limit will appear only two: the homogenized  membrane energy and the homogenized curvature energy. In the deduction of the Gamma-limit in \cite{Maryam},  we have explicitly stated the form of the homogenized membrane energy, while  the explicit form of the homogenized curvature energy was only announced and we have used only some implicit properties (its continuity, convexity, etc.). The same was done in the deduction of the Gamma-limit for a flat initial configuration in \cite{neff2007geometrically1} (we notice that another form of the Cosserat-curvature energy was considered) and no explicit form of the  homogenized curvature energy could be given. In \cite{Maryam} we have announced the form of the homogenized curvature energy without giving details about its deduction. Therefore, this is the main aim of this paper, i.e., to provide the solutions of all optimization problems needed for having an explicit approach of Cosserat shell model for  flat (Section \ref{flat}) and curved (Section \ref{curved}) initial configurations via the Gamma-limit. The second goal is to point out the advantages, at least from a computation point of view, of the usage of the curvature strain tensor $\alpha:=\overline{R}^T \Curl \overline{R}$ in the parental three-dimensional Cosserat-curvature energy, instead of other curvature strain tensors considered in literature.
 We mention that, even if $\alpha$ is   controlled by $\widehat{\mathfrak{K}} :=\Big(\overline{R} ^T\D (\overline{R} .e_1),R^T\D (\overline{R} .e_2),R^T  \D (\overline{R} .e_3)\Big)\in \R^{3\times 9}$ used in \cite{neff2004geometrically,neff2007geometrically1},  an explicit expression  via Gamma-convergence  of the homogenisation of the quadratic  curvature energy in terms of the third order  tensor $\widehat{\mathfrak{K}} $ is missing in the literature, even for flat initial configuration. In fact it turned out that  $\widehat{\mathfrak{K}}$ is frame-indifferent but not itself isotropic, a fact which makes it unsuitable to be used in an isotropic model. Beside these advantages  of the usage of the  second order dislocation density tensor $\alpha$, there is a one-two-one relation between $\alpha$ and  the so-called wryness tensor  $\Gamma := \Big(\mathrm{axl}(\overline{R} ^T\,\partial_{x_1} \overline{R} )\,|\, \mathrm{axl}(\overline{R} ^T\,\partial_{x_2} \overline{R} )\,|\,\mathrm{axl}(\overline{R} ^T\,\partial_{x_3} \overline{R} )\,\Big)\in \mathbb{R}^{3\times 3}$ (second order Cosserat deformation tensor \cite{Cosserat09},
 a Lagrangian strain measure for curvature-orientation change\cite{Pietraszkiewicz04}). This property  is not shared with $\widehat{\mathfrak{K}} $ and  $\mathfrak{K}$. We show that considering $\mathfrak{K} $ is equivalent to a particular choice of the constitutive coefficients  in a model in which $\alpha$ is used and, therefore, the formulas determined for the homogenized quadratic  curvature energy via Gamma convergence are valid for  parental isotropic three-dimensional energies which are quadratic in $\mathfrak{K} $, too. However, the general form of a quadratic isotropic energy has a more complicated form in terms of $\mathfrak{K} $ in comparison to the case when we express it in terms of $\alpha$, see Subsection \ref{conections}. Therefore, from a computational point of view it is more convenient to consider $\alpha$ in a isotropic Cosserat model.
Moreover, using \cite{Neff_curl08} (see Subsection \ref{conections}),
 we have that $\alpha$ controls $\mathfrak{K}$ in $L^2(\Omega, \R^{3\times 3\times 3})$ which controls  $\widehat{\mathfrak{K}}$ in $L^2(\Omega, \R^{3\times 3\times 3})$ which controls  $\alpha$ in $L^2(\Omega, \R^{3\times 3})$. 
 Therefore, a positive definite quadratic form in terms of one of the three variants of appropriate curvature tensors is energetically controlled by each of the three Cosserat-curvature tensors. This is why when the problem of existence of the solution or other similar qualitative results are considered, the form of the used Cosserat-curvature strain tensor is irrelevant, in the sense that if such a result is obtained for a  Cosserat-curvature quadratic in Cosserat-curvature strain tensor then it may be immediately extended for   Cosserat-curvature quadratic energies in the other two Cosserat-curvature strain tensor considered in the present paper.
 However, the usage of the Cosserat-curvature strain tensor $\alpha$ has two main advantages: 
 \begin{itemize}
 	\item 
 	a quadratic isotropic energy in terms of the wryness tensor $\Gamma$ is rewritten in a transparent and explicit form as a quadratic energy in terms of dislocation density tensor $\alpha$ (and vice versa), see Subsection \ref{conections};
 		\item the expression of 
 	a quadratic isotropic energy in terms of the wryness tensor  $\Gamma$ is very simple and suitable for analytical computations, see Subsection \ref{conections};
 	\item it admits the explicit analytical calculation of the homogenized quadratic curvature energies in the construction of the Cosserat shell model via $\Gamma$-convergence method, see Sections \ref{flat} and \ref{curved}.
 \end{itemize}
\section{Three dimensional geometrical nonlinear and physical linear\\ Cosserat models}
\subsection{General notation}
	Before continuing, let us introduce the notation we will use or have already used in Section \ref{intro} and in the abstract.  We denote by $\mathbb{R}^{m\times n}$, $n,m\in\mathbb{N}$, the set of real $m\times n$ second order tensors, written with
	capital letters. We adopt the usual abbreviations of Lie-group theory, i.e.,
	${\rm GL}(n)=\{X\in\mathbb{R}^{n\times n}\;|\det({X})\neq 0\}$ the general linear group,
	${\rm SL}(n)=\{X\in {\rm GL}(n)\;|\det({X})=1\},\;
	\mathrm{O}(n)=\{X\in {\rm GL}(n)\;|\;X^TX=\id_n\},\;{\rm SO}(n)=\{X\in {\rm GL}(n)| X^TX=\id_n,\det({X})=1\}$ with
	corresponding Lie-algebras $\mathfrak{so}(n)=\{X\in\mathbb{R}^{n\times n}\;|X^T=-X\}$ of skew symmetric tensors
	and $\mathfrak{sl}(n)=\{X\in\mathbb{R}^{n\times n}\;| \tr({X})=0\}$ of traceless tensors. Here, 
	for $a,b\in\mathbb{R}^n$ we let $ \bigl\langle {a},{b}\bigr\rangle _{\mathbb{R}^n}$  denote the scalar product on $\mathbb{R}^n$ with
	associated
	{(squared)}
	vector norm $\lVert a\rVert_{\mathbb{R}^n}^2= \bigl\langle {a},{a}\bigr\rangle _{\mathbb{R}^n}$.
	The standard Euclidean scalar product on $\mathbb{R}^{n\times n}$ is given by
	$ \bigl\langle  {X},{Y}\bigr\rangle _{\mathbb{R}^{n\times n}}={\rm tr}(X Y^T)$, and thus the
	{(squared)}
	Frobenius tensor norm is
	$\lVert {X}\rVert^2= \bigl\langle  {X},{X}\bigr\rangle _{\mathbb{R}^{n\times n}}$. In the following we omit the index
	$\mathbb{R}^n,\mathbb{R}^{n\times n}$. The identity tensor on $\mathbb{R}^{n \times n}$ will be denoted by $\id_n$, so that
	${\rm tr}({X})= \bigl\langle {X},{\id}_n\bigr\rangle $. We let ${\rm Sym}(n)$ and ${\rm Sym}^+(n)$ denote the symmetric and positive definite symmetric tensors, respectively.  For all $X\in\mathbb{R}^{3\times3}$ we set ${\rm sym}\, X=\frac{1}{2}(X^T+X)\in{\rm Sym}(3)$, $\skw\, X\,=\frac{1}{2}(X-X^T)\in \mathfrak{so}(3)$ and the deviatoric part $\dev  \, X=X-\frac{1}{n}\;\tr(X)\,\id_n\in \mathfrak{sl}(n)$  and we have
	the {orthogonal Cartan-decomposition  of the Lie-algebra} 
	$
	\mathfrak{gl}(3)=\{\mathfrak{sl}(3)\cap {\rm Sym}(3)\}\oplus\mathfrak{so}(3) \oplus\mathbb{R}\!\cdot\! \id_3,\ 
	X=\dev  \,\sym \,X+ \skw\,X+\frac{1}{3}\tr(X)\, \id_3\,.
	$
We use the canonical identification of $\mathbb{R}^3$ with $\so(3)$, and, for
$
{A}=	\begin{footnotesize}\begin{pmatrix}
0 &-a_3&a_2\\
a_3&0& -a_1\\
-a_2& a_1&0
\end{pmatrix}\end{footnotesize}\in \so(3)
$
we consider the operators ${\rm axl}\,:\so(3)\rightarrow\mathbb{R}^3$ and $\anti:\mathbb{R}^3\rightarrow \so(3)$ through
$
{\rm axl}\, {A}:=\left(
a_1,
a_2,
a_3
\right)^T,$ $  {A}.\, v=({\rm axl}\,  {A})\times v, $ $ ({\rm anti}(v))_{ij}=-\epsilon_{ijk}\,v_k \ \  \forall \, v\in\mathbb{R}^3,
$ $({\rm axl}\,  {A})_k=-\frac{1}{2}\, \epsilon_{ijk} {A}_{ij}=\frac{1}{2}\,\epsilon_{kij} {A}_{ji}\,,$ $  {A}_{ij}=-\epsilon_{ijk}\,({\rm axl}\,  \mathbf{A})_k=:{\rm anti}({\rm axl}\,  {A})_{ij},
$
where $\epsilon_{ijk}$ is the totally antisymmetric third order permutation tensor.
	For $X\in {\rm GL}(n)$,  ${\rm Adj}({X})$  denotes the tensor of
	transposed cofactors, while  the $(i, j)$ entry of the cofactor is the $(i, j)$-minor times a sign factor.  
	Here, given $z_1,z_2,z_3\in \mathbb{R}^{n\times k}$,  the notation $(z_1\,|\,z_2\,|\,z_3)$    means a matrix $Z\in \mathbb{R}^{n\times 3k}$ obtained by taking $z_1,z_2,z_3$ as block matrices. A third order tensor $A=(A_{ijk})\in \mathbb{R}^{3\times 3\times 3}$ will be replaced with an equivalent object, by reordering its  components in a $\mathbb{R}^{3\times 9}$ matrix
$
	A\equiv(A_1\,|\,A_2\,|\,A_3)\in \mathbb{R}^{3\times 9}, \  A_k:=(A_{ijk})_{ij}=A.\, e_k\in \mathbb{R}^{3\times 3}, \ k=1,2,3,
$
	and we consider
	$ \text{sym}A=\big(\text{sym}\,A_1\,\,|\,\text{sym}\,A_2,\,|\,\text{sym}\,A_3 \big)\in\mathbb{R}^{3\times 9},$ $
	\text{skew}A=\big(\text{skew}A_1\,|\,\text{skew}A_2\,|\,\text{skew}A_3 \big)\in\mathbb{R}^{3\times 9},$ $
	\tr (A)=\tr (A_1)+\tr (A_2)+\tr (A_3).
	$
	Moreover, we define the products of  a second order tensor $B=(B_{ij})_{ij}\in\mathbb{R}^{3\times 3}$ and a  third order tensor $A=(A_1\,|\,A_2\,|\,A_3)\in  \mathbb{R}^{3\times 9}$ in a natural way as 
$B\, A=(B\,A_1\,|\,B\,A_2\,|\,B\,A_3)\in  \mathbb{R}^{3\times 9},$ $
	A\,B=\big(\sum_{k=1}^3 A_k\,B_{k1}\,|\, \sum_{k=1}^3 A_k\,B_{k2}\, |\,\sum_{k=1}^3 A_k\,B_{k3}\big)\in  \mathbb{R}^{3\times 9}.
	$
	Let us remark that for  $B=(B_{ij})_{ij}\in {\rm GL}^+(3)$ having the inverse $B=(B^{ij})_{ij}$ and  $A,C\in\mathbb{R}^{3\times 9}$ the following equivalences hold true
	$
	A\, B=C\  \Leftrightarrow\  \sum_{k=1}^3 A_k\,B_{kl}=C_l\  \Leftrightarrow\  \sum_{l=1}^3(B^{lm}\sum_{k=1}^3 A_k\,B_{kl})=\sum_{l=1}^3C_l\,B^{lm}   \Leftrightarrow\  A=C\, B^{-1}.
	$
	We define the norm of a  third order tensor $A=(A_1\,|\,A_2\,|\,A_3)\in  \mathbb{R}^{3\times 9}$ by
	$
	\|A\|^2=\sum_{k=1}^3\|A_k\|^2.
	$
For $A_1, A_2, A_3\in \so(3)$ we define $\axl A=(\axl A_1\,|\,\axl A_2\,|\,\axl A_3)\in \mathbb{R}^{3\times 3}$, while for $z=(z_1\,|\,z_2\,|\,z_3)\in \mathbb{R}^{3\times 3}$ we define $\anti z=( \anti z_1\,|\, \anti z_2\,|\, \anti z_3)\in \mathbb{R}^{3\times 9}$. For a given matrix $M\in \mathbb{R}^{2\times 2}$ we define the lifted quantity $
M^\flat =\begin{footnotesize}\begin{pmatrix}
M_{11}& M_{12}&0 \\
M_{21}&M_{22}&0 \\
0&0&0
\end{pmatrix}\end{footnotesize}
\in \mathbb{R}^{3\times 3}.
$
	
	Let $\Omega$ be an open domain of $\mathbb{R}^{3}$. The usual Lebesgue spaces of square integrable functions, vector or tensor fields on $\Omega$ with values in $\mathbb{R}$, $\mathbb{R}^3$ or $\mathbb{R}^{3\times 3}$, respectively will be denoted by ${\rm L}^2(\Omega)$. Moreover, we introduce the standard Sobolev spaces 
	$
	{\rm H}^1(\Omega)\,=\,\{u\in {\rm L}^2(\Omega)\, |\, \D \, u\in {\rm L}^2(\Omega)\}, $ 
	${\rm H}({\rm curl};\Omega)\,=\,\{v\in {\rm L}^2(\Omega)\, |\, {\rm curl}\, v\in {\rm L}^2(\Omega)\}
	$
	of functions $u$ or vector fields $v$, respectively.  For vector fields $u=\left(    u_1, u_2,u_3\right)$ with  $u_i\in {\rm H}^{1}(\Omega)$, $i=1,2,3$,
	we define
	$
	\D  \,u:=\left(
	\D \,  u_1\,|\,
	\D \, u_2\,|\,
	\D \, u_3
	\right)^T,
	$
	while for tensor fields $P$ with rows in ${\rm H}({\rm curl}\,; \Omega)$, i.e.,
	$
	P=\begin{footnotesize}\begin{pmatrix}
	P^T.e_1\,|\,
	P^T.e_2\,|\,
	P^T. e_3
	\end{pmatrix}\end{footnotesize}^T$ with $(P^T.e_i)^T\in {\rm H}({\rm curl}\,; \Omega)$, $i=1,2,3$,
	we define
	$
	{\rm Curl}\,P:=\begin{footnotesize}\begin{pmatrix}
	{\rm curl}\, (P^T.e_1)^T\,|\,
	{\rm curl}\, (P^T.e_2)^T\,|\,
	{\rm curl}\, (P^T\, e_3)^T
	\end{pmatrix}\end{footnotesize}^T
	.
	$
	The corresponding Sobolev-spaces will be denoted by
	$
	{\rm H}^1(\Omega)$ and   ${\rm H}^1(\Curl;\Omega)$, respectively. We will use the notations: $\D  _\xi$, $\D  _x$, $\Curl_\xi$, $\Curl_x$ etc. to indicate the variables for which these quantities are calculated.

	\subsection{Geometrical nonlinear and physically linear Cosserat elastic 3D models}
	We consider an elastic material which  in its reference configuration fills the three dimensional domain $\Omega \subset R ^3$. 
	In the Cosserat theory, each point of the reference body is  endowed with three independent orthogonal directors, i.e., with a matrix field $\overline{R} :\Omega \rightarrow \text{SO(3)}$ called the \textit{microrotation} tensor. Let us remark that while the tensor $ {\rm polar}(\D  \varphi )\in \text{SO(3)}$ of the polar decomposition of $F := \D  \varphi ={\rm polar}(\D  \varphi )\sqrt{(\D  \varphi )^T\D  \varphi }$ is not independent of $\varphi $ \cite{nefffichle,borisonfischle,Neff_lankei}, the tensor $\overline{R} $ in the Cosserat theory is independent of $\D  \varphi $. In other words, in general, $\overline{R} \neq {\rm polar}(\D  \varphi )$.
	In  geometrical nonlinear and physically linear Cosserat elastic 3D models, the deformation $\varphi $ and the microrotation $\overline{R} $ are the solutions of  the following \textit{nonlinear minimization problems} on $\Omega $:
	\begin{align}\label{energy model}
	I(\varphi ,F , \overline{R} ,  \partial_{x_i}\overline{R} )=\dd\int_{\Omega } \Big[W_{\rm strain}(F,\overline{R} )+{W}_{\rm Cosserat- curv}(\overline{R},  \partial_{x_i}\overline{R} )\Big]\; dV   \quad \mapsto \min. \qquad\text{w.r.t}\quad (\varphi ,\overline{R} ),
	\end{align}
	where $	F=\D  \varphi$ represents the deformation gradient, $W_{\rm strain}(F,\overline{R} )$ is the strain energy, ${W}_{\rm Cosserat- curv}(\overline{R},  \partial_{x_i}\overline{R} )$ is the  Cosserat curvature (bending) energy and 	and $dV$ denotes the  volume element in the $\Omega $-configuration.  	For  simplicity of exposition we consider that the  {external loadings}  are not present and we have only Dirichlet-type boundary conditions for $\varphi$.

	  In this paper, the strain energy is considered to be  a general isotropic quadratic energy (physically linear) in terms of the non-symmetric Biot-type stretch tensor $\overline U  :\,=\,\dd\overline{R}^T  F \in \mathbb{R} ^{3\times 3}$ (the first Cosserat deformation tensor), i.e.,
	\begin{align}
	W_{\rm strain}(F,\overline{R} )=W_{\rm{mp}}(\overline U  ):\,=\,&\mu\,\lVert \dev\,\text{sym}(\overline U  -\id_3)\rVert^2+\mu_{\rm c}\,\lVert \text{skew}(\overline U  -\id_3)\rVert^2+
	\frac{\kappa}{2}\,[{\rm tr}(\text{sym}(\overline U  -\id_3))]^2\, ,
	\end{align}
	while the Cosserat curvature (bending) energy ${W}_{\rm Cosserat- curv}(\overline{R},  \partial_{x_i}\overline{R})$ is considered to be  isotropic  in terms of  $\overline{R}$ and quadratic in the following curvature strain candidates 
	\begin{align}\label{energy refeconf}
	\mathfrak{K} :=\,&\overline{R} ^T\D \overline{R} =\Big(\overline{R} ^T\partial_{x_1}\overline{R} ,\overline{R} ^T\partial_{x_1}\overline{R} ,\overline{R} ^T\partial_{x_1}\overline{R} \Big)\in \R^{3\times 9},\notag\\
	\widehat{\mathfrak{K}} :=\,&\big(\overline{R} ^T\D (\overline{R} .e_1),R^T\D (\overline{R} .e_2),R^T\D   (\overline{R} .e_3)\big)\in \R^{3\times 9},\\
	\alpha :\,=\,&\overline{R} ^T\, \Curl  \,\overline{R} \in \mathbb{R} ^{3\times 3}\qquad \qquad\qquad  \qquad\qquad\qquad\qquad\qquad  \small{\textrm{(the second order  dislocation density tensor \cite{Neff_curl08})}}\, ,\notag\\
	   \Gamma :=\,& \Big(\mathrm{axl}(\overline{R} ^T\,\partial_{x_1} \overline{R} )\,|\, \mathrm{axl}(\overline{R} ^T\,\partial_{x_2} \overline{R} )\,|\,\mathrm{axl}(\overline{R} ^T\,\partial_{x_3} \overline{R} )\,\Big)\in \mathbb{R}^{3\times 3} \ \small{\text{\rm (the wryness tensor)}}.\notag
	\end{align}
	 The second order Cosserat deformation tensor $\Gamma$ (the wryness tensor) was considered 	as a Lagrangian strain measure for curvature-orientation change \cite{Pietraszkiewicz04}) since the introduction of the Cosserat model \cite{Cosserat09}, the second order dislocation density tensor $\alpha$ is in a direct relation to the wryness tensor via Nye's formulas and energetically controls $\mathfrak{K}$ (see \cite{Neff_curl08,lankeitneff}), the tensor $\mathfrak{K}$ represents a first impulse choice while $\widehat{\mathfrak{K}}$ is an ad hoc choice which is not suitable for an isotropic quadratic curvature energy. Let us notice that all the mentioned curvature tensors are frame-indifferent by definition, i.e., they remain invariant after the change $\overline{R}\to  \overline{Q}\,\overline{R}$, with $\overline{Q}\in {\rm SO}(3)$ constant. In addition, $\Gamma$, $\alpha$ and $\mathfrak{K}$ are isotropic, property which are not shared with $\widehat{\mathfrak{K}} $.

	 The suitable form of the isotropic Cosserat-curvature energy is discussed in Section \ref{conections}. However, let us already announce  that from our point of view, the most suitable expression for analytical computations of the general isotropic energy quadratic in $\overline{R}$ is
	\begin{align}
{W}_{\rm Cosserat- curv}(\overline{R},  \partial_{x_i}\overline{R} )=
	W_{\rm{curv}}( \alpha ):\,=\,&\mu\,{L}_{\rm c}^2\left( b_1\,\lVert \text{sym}\, \alpha \rVert^2+b_2\,\lVert \text{skew}\, \alpha \rVert^2+  \frac{b_3}{4}\,
	[{\rm tr}(\alpha )]^2\right)\\
	=\,&\mu\,{L}_{\rm c}^2\left( b_1\,\lVert \text{sym}\, \Gamma \rVert^2+b_2\,\lVert \text{skew}\, \Gamma \rVert^2+  b_3\,
	[{\rm tr}(\Gamma )]^2\right). \notag
	\end{align}
	  The parameters $\mu\,$ and $\lambda$ are the elasticity \textit{Lam\'e-type\footnote{In the sense that if Cosserat effects are ignored $\mu\,$ and $\lambda$ are the Lam\'e coefficients of classical isotropic elasticity}}
constants, $\kappa=\displaystyle\frac{2\,\mu\,+3\,\lambda}{3}$ is the \textit{infinitesimal bulk modulus}, $\mu_{\rm c}> 0$ is the \textit{Cosserat couple modulus} and $L_c>0$ is the \textit{internal length} and responsible for \textit{size effects} in the sense that smaller samples are relatively stiffer than larger samples.  If not stated otherwise, we assume, here, that $\mu\,>0$, $\kappa>0$, $\mu_{\rm c}> 0$. The Cosserat couple modulus $\mu_c$ controls the deviation of the microrotation $\overline{R}$ from the continuum rotation $\text{polar}(\D   \varphi)$ in the polar decomposition of $\D   \varphi=\text{polar}(\D   \varphi)\cdot\sqrt{\D   \varphi^T \D  \varphi}$.
	For $\mu_c\to \infty$ the constraint $R=\text{polar}(\D \varphi)$ is generated and the model would turn into a Toupin couple stress model. We also assume that $b_1>0, b_2>0$ and $b_3>0$, which assures the \textit{coercivity}  and \textit{convexity of the curvature} energy \cite{neff2007geometrically1}.

	\subsection{More on Cosserat-curvature strain measures}\label{conections}

	\subsubsection{The curvature tensor $\mathfrak{K}=\overline{ R }^T {\rm D} \overline{ R } \in\mathbb{R}^{3\times9}$ }
	A first choice for a curvature strain tensor is  the third order elastic Cosserat curvature tensor  \cite{Cosserat08,Cosserat09,Cosserat1896,Cosserat09b}
	\begin{align}
{\mathfrak{K}}:=&\dd\overline{ R }^T {\rm D}\, \overline{ R }=\overline{ R }^T\,(\partial_{x_1} \overline{ R }\,|\, \partial_{x_2} \overline{ R }\,|\, \partial_{x_3} \overline{ R })=(\overline{ R }^T\,\partial_{x_1} \overline{ R }\,|\, \overline{ R }^T\,\partial_{x_2} \overline{ R }\,|\,\overline{ R }^T\, \partial_{x_3} \overline{ R }) \in\mathbb{R}^{3\times9},
	\end{align}
	and the curvature energy given by observing that 
$
	\dd \widetilde{W}_{\rm{curv}}({\mathfrak{K}}):=\dd a_1\,\|{\mathfrak{K}}\|^2\, . \notag
$
	This one-parameter choice is motivated \cite{gastelneff} by  
	$
	{\mathfrak{K}}\equiv\text{skew}\, {\mathfrak{K}},
$
	 since $\overline{ R }^T\partial_{x_i} \overline{ R }\in \mathfrak{so}(3)$, $i=1,2,3,$ and
	\begin{align}
	\text{sym}\, {\mathfrak{K}}&=\big(\text{sym}(\overline{ R }^T\partial_{x_1} \overline{ R })\,|\,\text{sym}(\overline{ R }^T\partial_{x_2} \overline{ R })\,|\,\text{sym}(\overline{ R }^T\partial_{x_3} \overline{ R }) \big)=(0_3\,|\,0_3\,|\,0_3),\notag\vspace{1.5mm}\\
	\tr {\mathfrak{K}}&=\tr(\overline{ R }^T\partial_{x_1} \overline{ R })+\tr(\overline{ R }^T\partial_{x_2} \overline{ R })+\tr(\overline{ R }^T\partial_{x_3} \overline{ R }) =0,
	\notag\vspace{1.5mm}\\
	\text{skew}\, {\mathfrak{K}}&=\big(\text{skew}(\overline{ R }^T\partial_{x_1} \overline{ R })\,|\,\text{skew}(\overline{ R }^T\partial_{x_2} \overline{ R })\,|\,\text{skew}(\overline{ R }^T\partial_{x_3} \overline{ R }) \big).
	\end{align}
However, this is not the most general form of an quadratic isotropic energy in  $\overline{R}$ as will be seen later.

	The third order tensor
$
	 {\mathfrak{K}}=\Big(\overline{ R }^T\partial_{x_1} \overline{ R }\,|\, \overline{ R }^T\partial_{x_2} \overline{ R }\,|\,\overline{ R }^T\partial_{x_3} \overline{ R }\,\Big) \in \mathbb{R} ^{3\times 9}
	$
is usually replaced	by  the  {wryness tensor} $\Gamma = \Big(\mathrm{axl}(\overline{ R }^T\partial_{x_1} \overline{ R })\,|\, \mathrm{axl}(\overline{ R }^T\,\partial_{x_2} \overline{ R })\,|\,\mathrm{axl}(\overline{ R }^T\partial_{x_3} \overline{ R })\,\Big)$, since we have the one-to-one relations  
\begin{align}\label{kG}
\mathfrak{K}=\anti \Gamma, \qquad \qquad \Gamma=\axl \mathfrak{K},
\end{align}
due to the fact that $\overline{ R }^T\partial_{x_1} \overline{ R }\in \mathfrak{so}(3),$ $i=1,2,3$,
which in indices read
\begin{align}\label{kGi}
\mathfrak{K}_{ijk}=\overline{R}_{li}\frac{\partial \overline{R}_{lj}}{\partial x_k}, \qquad \qquad \mathfrak{K}_{ijk}=-\epsilon_{ijl} \Gamma_{lk}, \qquad \qquad \Gamma_{ik}=\frac{1}{2}\,\sum_{r,l=1}^3
\epsilon_{ilr} {\mathfrak{K}}_{lrk}.
\end{align}

	For a detailed discussion on various strain measures of the non-linear micropolar continua we refer to  \cite{Pietraszkiewicz09}.
 	
\begin{proposition}\label{propiso}
	A general isotropic quadratic energy depending on $\overline{ R}^T{\rm D} \overline{ R}\in \mathbb{R}^{3\times 9}$ has the form
	\begin{align}\label{curveg}
	\widetilde{W}(\mathfrak{K})
	=\,&b_1\,\lVert \textrm{\rm sym} \,\axl \mathfrak{K}\rVert^2+b_2\,\lVert \textrm{\rm skew}\,\axl \mathfrak{K}\rVert^2+4\,b_3\,
	[{\rm tr}(\axl \mathfrak{K})]^2\notag\\
	=\,&\ \ \ \, b_1\,\lVert \textrm{\rm sym} \,\big(\axl (\mathfrak{K}.e_1)\,|\,\axl (\mathfrak{K}.e_2)\,|\,\axl (\mathfrak{K}.e_3)\big)\rVert^2\\&+b_2\,\lVert \textrm{\rm skew}\,\big(\axl (\mathfrak{K}.e_1)\,|\,\axl (\mathfrak{K}.e_2)\,|\,\axl (\mathfrak{K}.e_3)\big)\rVert^2\notag\\&+b_3\,
	[{\rm tr}(\big(\axl (\mathfrak{K}.e_1)\,|\,\axl (\mathfrak{K}.e_2)\,|\,\axl (\mathfrak{K}.e_3)\big))]^2.\notag
	\end{align}
\end{proposition}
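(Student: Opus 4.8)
The plan is to reduce the statement to a classical invariant-theory fact. The key observation is that $\mathfrak{K}=\overline{R}^T\D\overline{R}$ never ranges over all of $\R^{3\times 9}$: each block $\mathfrak{K}.e_i=\overline{R}^T\partial_{x_i}\overline{R}$ lies in $\so(3)$, so by \eqref{kG} the assignment $\mathfrak{K}\mapsto\Gamma=\axl\mathfrak{K}$ is a linear isomorphism between $\so(3)^3\subset\R^{3\times 9}$ and $\R^{3\times 3}$, with $\norm{\mathfrak{K}.e_i}^2=2\,\norm{\axl(\mathfrak{K}.e_i)}^2$. Hence a quadratic energy $\widetilde{W}(\mathfrak{K})$ \emph{is the same thing} as a quadratic form $\widehat{W}(\Gamma):=\widetilde{W}(\anti\Gamma)$ on $\R^{3\times 3}$, and frame-indifference is automatic here, since $\mathfrak{K}$ (hence $\Gamma$) is already invariant under $\overline{R}\to Q\,\overline{R}$ with constant $Q\in\SO(3)$. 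So the whole content of the proposition is: determine all \emph{isotropic} quadratic forms on $\R^{3\times 3}$.

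First I would fix the isotropy action on $\Gamma$. A short chain-rule computation (rotating the reference configuration, $\overline{R}(x)\mapsto\overline{R}(Qx)\,Q$, and using $Q^T\anti(v)\,Q=\anti(Q^Tv)$ for $Q\in\SO(3)$) shows that the wryness tensor then transforms by conjugation, $\Gamma\mapsto Q^T\Gamma\,Q$ -- the classical transformation law, see \cite{Pietraszkiewicz04,Pietraszkiewicz09}. Thus isotropy of $\widehat{W}$ amounts to $\widehat{W}(Q^T\Gamma\,Q)=\widehat{W}(\Gamma)$ for all $Q\in\SO(3)$.

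Then I would classify such forms. Writing $\widehat{W}(\Gamma)=\Mprod{\mathbb{C}\Gamma}{\Gamma}$ with $\mathbb{C}\col\R^{3\times 3}\to\R^{3\times 3}$ self-adjoint and linear, isotropy is equivalent to $\mathbb{C}$ commuting with $\SO(3)$-conjugation. One may then either invoke the orthogonal Cartan-type splitting $\R^{3\times 3}=\R\,\id_3\oplus\so(3)\oplus(\sl(3)\cap\Sym(3))$ (recalled in the notation section) into three pairwise non-isomorphic irreducible $\SO(3)$-modules of dimensions $1$, $3$, $5$, and conclude by Schur's lemma that $\mathbb{C}$ is a scalar multiple of the identity on each summand; or argue directly that the most general conjugation-isotropic linear map on $\R^{3\times 3}$ is $\mathbb{C}X=a\,X+b\,X^T+c\,\tr(X)\,\id_3$, so that
\[
\Mprod{\mathbb{C}\Gamma}{\Gamma}=(a+b)\,\norm{\sym\Gamma}^2+(a-b)\,\norm{\skw\Gamma}^2+c\,[\tr\Gamma]^2 .
\]
Either way $\widehat{W}(\Gamma)=b_1\,\norm{\sym\Gamma}^2+b_2\,\norm{\skw\Gamma}^2+b_3\,[\tr\Gamma]^2$, and since these three terms are linearly independent this is exactly the general isotropic quadratic form.

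It remains to translate back, writing $\Gamma=\axl\mathfrak{K}=\big(\axl(\mathfrak{K}.e_1)\,|\,\axl(\mathfrak{K}.e_2)\,|\,\axl(\mathfrak{K}.e_3)\big)$, i.e.\ $\sym\Gamma=\sym\axl\mathfrak{K}$, $\skw\Gamma=\skw\axl\mathfrak{K}$ and $\tr\Gamma=\tr\axl\mathfrak{K}$, and relabelling the constants $b_1,b_2,b_3$ (absorbing, where needed, the factor $2$ relating $\norm{A}$ with $\norm{\axl A}$ on $\so(3)$); this yields \eqref{curveg}. I expect the only genuine obstacle to be the first step, i.e.\ checking carefully that ``isotropic in $\overline{R}$'' really forces the conjugation action $\Gamma\mapsto Q^T\Gamma\,Q$ on the wryness tensor; once that is settled, the remainder is a short, standard representation-theoretic argument together with elementary bookkeeping of constants.
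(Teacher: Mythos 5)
Your argument is correct and takes essentially the same route as the paper: pass from the skew-block third-order tensor $\mathfrak{K}$ to the second-order wryness tensor $\Gamma=\axl\mathfrak{K}$, classify isotropic quadratic forms on $\R^{3\times 3}$, and translate back. The genuine difference is the middle step: the paper disposes of it by citing Eremeyev and Pietraszkiewicz \cite{EremeyevPi}, whereas you prove it. Your verification that isotropy acts on $\Gamma$ by conjugation $\Gamma\mapsto Q^T\Gamma Q$ (via $\overline{R}(x)\mapsto\overline{R}(Qx)\,Q$ and $Q^T\anti(v)\,Q=\anti(Q^Tv)$) is correct, and the subsequent classification via the irreducible $\SO(3)$-decomposition $\R^{3\times 3}=\R\,\id_3\oplus\so(3)\oplus\bigl(\sl(3)\cap\Sym(3)\bigr)$ together with Schur's lemma, or equivalently via the explicit commutant $X\mapsto a\,X+b\,X^T+c\,\tr(X)\,\id_3$, is exactly the standard argument underlying the cited result. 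What your version buys is a self-contained proof that makes transparent the representation-theoretic reason why precisely three constants appear. One incidental remark on the statement itself: the coefficient $4\,b_3$ in the first displayed line is a misprint and should read $b_3$ --- since $\axl\mathfrak{K}$ \emph{is} $\bigl(\axl(\mathfrak{K}.e_1)\,|\,\axl(\mathfrak{K}.e_2)\,|\,\axl(\mathfrak{K}.e_3)\bigr)$ by the paper's own definitions, the two lines of the display are literally identical and must carry the same coefficient; the stray $4$ likely migrated from the $\tr(\alpha)=2\,\tr(\Gamma)$ relation that appears in the companion $\alpha$-formula. In particular, no factor actually needs to be absorbed in your final bookkeeping step: the translation $\Gamma=\axl\mathfrak{K}$ carries $\|\sym\Gamma\|^2$, $\|\skw\Gamma\|^2$, $[\tr\Gamma]^2$ to the stated terms verbatim.
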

\begin{proof} The proof is based on the result  from \cite{EremeyevPi} and on the identities \eqref{kG}. Indeed, a quadratic energy in $\mathfrak{K}$ is a quadratic energy in $\Gamma$. Due to the results presented in \cite{EremeyevPi}, a quadratic isotropic energy written in terms of  $\Gamma$ is given by 
\begin{align}\label{isoE}	W(\Gamma)=\,&b_1\,\lVert \textrm{\rm sym} \,\Gamma\rVert^2+b_2\,\lVert \textrm{\rm skew}\,\Gamma\rVert^2+b_3\,
	[{\rm tr}(\Gamma)]^2.
	\end{align}
Using \eqref{kG}, the proof is complete.
	\end{proof}

	We can express the uni-constant isotropic curvature term as a positive definite quadratic form in terms of $\Gamma$, i.e.,
	\begin{align}
	\|{\rm D} \overline{ R}\|^2_{\R^{3\times 3\times 3}}&=\|\overline{ R}^T{\rm D} \overline{ R}\,\|_{\R^{3\times 3\times 3}}^2=\|\mathfrak{K}\|_{\R^{3\times 3\times 3}}^2=\|\overline{ R}^T\partial_{x_1}  \overline{ R}\|_{\R^{3\times 3}}^2+\|\overline{ R}^T\partial_{x_2}\overline{ R}\|_{\R^{3\times 3}}^2+\|\overline{ R}^T\partial_{x_3}\overline{ R}\|_{\R^{3\times 3}}^2\notag\\
	&=2\,\|\axl(\overline{ R}^T\partial_{x_1}  \overline{ R})\|_{\R^{3}}^2+2\,\|\axl(\overline{ R}^T\partial_{x_2}  \overline{ R})\|_{\R^{3}}^22\,\|\axl(\overline{ R}^T\partial_{x_3}  \overline{ R})\|_{\R^{3}}^2=2\,\|\Gamma\|_{\R^{3\times 3}}^2.
	\end{align}
	Therefore, 	a general positive definite quadratic isotropic curvature energy \eqref{curveg} in $\Gamma$
	is a  positive definite  quadratic form in terms of $\|\overline{ R}^T{\rm D} \overline{ R}\|^2_{\R^{3\times 3\times 3}}$, and vice versa. 
Thus, working with  a quadratic isotropic positive definite energy in terms of $\overline{ R}^T{\rm D} \overline{ R}$ is equivalent with working with a quadratic positive definite isotropic energy in terms of $\Gamma$. Since the expression of a isotropic  curvature energy has a simpler form in terms of $\Gamma$, in order to keep the calculations as simple as possible, we prefer to work with $\Gamma$. 

	\subsubsection{The curvature tensor $\alpha=\overline{R}^T\Curl \,\overline{R}\in \mathbb{R}^{3\times 3}$}\label{cta}

Another choice as curvature strain  is  the  \textit{second order dislocation density tensor} $\alpha$.
Comparing with $\overline{R}^T\D \,\overline{R}$, it simplifies considerably
the representation by allowing to use the orthogonal decomposition
\begin{align}\overline{R}^T\, \Curl \,\overline{R}=\alpha=\dev\, \sym \,\alpha+\skw\, \alpha+ \frac{1}{3}\,\tr(\alpha)\id_3.\end{align}

Moreover, it yields an equivalent control of spatial derivatives of rotations \cite{Neff_curl08} and allows us to write the curvature energy   in a fictitious Cartesian configuration in terms of the {wryness tensor}  \cite{Neff_curl08,Pietraszkiewicz04}
$
\Gamma\in  \mathbb{R} ^{3\times 3},
$
since (see \cite{Neff_curl08})  the following close relationship between the \textit{wryness tensor}
and the \textit{dislocation density tensor} holds
\begin{align}\label{Nye1}
\alpha =-\Gamma  ^T+\tr(\Gamma  )\, \id_3, \qquad\textrm{or equivalently,}\qquad \Gamma  =-\alpha ^T+\frac{1}{2}\tr(\alpha)\, \id_3.
\end{align}
Hence,  
\begin{align}
\sym \,\Gamma  =&-\sym\,\alpha+ \frac{1}{2}\tr(\alpha )\, \id_3, \qquad 
{\rm dev}\,\sym \,\Gamma  =-{\rm dev}\,\sym\,\alpha,\vspace{1.5mm}\notag\\
\skw \,\Gamma  =&-\skw(\alpha^T )=\skw\,\alpha ,\qquad\quad \ \ 
\tr(\Gamma  )=-\tr(\alpha )+ \frac{3}{2}\,\tr(\alpha )=\frac{1}{2}\tr(\alpha )
\end{align}
and
\begin{align}
\sym \,\alpha\,=\,&-\sym\,\Gamma+ \tr(\Gamma)\, \id_3, \quad
{\rm dev}\,\sym \,\alpha\,=\,-{\rm dev}\,\sym\, \Gamma,\quad
{\skw}
\,\alpha\,=\,\skw\,\Gamma,\quad 
\tr(\alpha)\,=\,2\,\tr(\Gamma).
\end{align}
In addition, from \cite{EremeyevPi} we have 
\begin{proposition}
	A general quadratic isotropic energy depending on $\alpha$ has the form
	\begin{align}\label{giso}
	W_{\rm{curv}}( \alpha )
	=\,&b_1\,\lVert \textrm{\rm sym}\, \alpha\rVert^2+b_2\,\lVert \textrm{\rm skew}\,\alpha\rVert^2+\frac{b_3}{4}\,
	[{\rm tr}(\alpha)]^2.
	\end{align}
\end{proposition}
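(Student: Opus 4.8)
The plan is to argue exactly as for Proposition \ref{propiso}: the assertion is simply the representation of a general isotropic quadratic form on the nine-dimensional space $\mathbb{R}^{3\times 3}$, now evaluated at the dislocation density tensor $\alpha$. The backbone is the result of \cite{EremeyevPi} already used in \eqref{isoE}: every quadratic form on $\mathbb{R}^{3\times 3}$ invariant under $X\mapsto Q^{T}X\,Q$ for all $Q\in\SO(3)$ is a linear combination of $\lVert\sym X\rVert^2$, $\lVert\skw X\rVert^2$ and $[\tr X]^2$. The reason exactly three parameters occur is the orthogonal Cartan decomposition $\mathbb{R}^{3\times 3}=\{\sl(3)\cap\Sym(3)\}\oplus\so(3)\oplus\mathbb{R}\cdot\id_3$ into pairwise inequivalent $\SO(3)$-irreducible blocks (the five-, three- and one-dimensional representations): no invariant bilinear pairing couples two distinct blocks, so the only invariants are the squared norms $\lVert\dev\,\sym X\rVert^2$, $\lVert\skw X\rVert^2$, $[\tr X]^2$, equivalently $\lVert\sym X\rVert^2$, $\lVert\skw X\rVert^2$, $[\tr X]^2$ since $\lVert\sym X\rVert^2=\lVert\dev\,\sym X\rVert^2+\tfrac13[\tr X]^2$.

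First I would apply this representation directly with $X=\alpha$: as $W_{\rm curv}$ is by hypothesis quadratic and isotropic in $\alpha$, it equals $\beta_1\lVert\sym\alpha\rVert^2+\beta_2\lVert\skw\alpha\rVert^2+\beta_3[\tr\alpha]^2$, and relabeling $\beta_1=:b_1$, $\beta_2=:b_2$, $\beta_3=:b_3/4$ yields \eqref{giso}, the factor $1/4$ being only a normalization chosen for later compatibility with the $\Gamma$-form. Alternatively — and this is the route I would actually write out, since it exposes that normalization — I would pass through the wryness tensor: the Nye-type relations \eqref{Nye1} constitute a \emph{linear} bijection $\alpha\leftrightarrow\Gamma$ which is moreover isotropic (equivariant under conjugation), so a quadratic isotropic energy in $\alpha$ is a quadratic isotropic energy in $\Gamma$, hence by \eqref{isoE} it equals $b_1\lVert\sym\Gamma\rVert^2+b_2\lVert\skw\Gamma\rVert^2+b_3[\tr\Gamma]^2$. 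Then I would substitute the identities already recorded, $\sym\Gamma=-\sym\alpha+\tfrac12\tr(\alpha)\,\id_3$, $\skw\Gamma=\skw\alpha$, $\tr\Gamma=\tfrac12\tr\alpha$, use $\lVert\id_3\rVert^2=3$ and $\langle\sym\alpha,\id_3\rangle=\tr\alpha$ to get $\lVert\sym\Gamma\rVert^2=\lVert\sym\alpha\rVert^2-\tfrac14[\tr\alpha]^2$ and $[\tr\Gamma]^2=\tfrac14[\tr\alpha]^2$, and collect terms to arrive at \eqref{giso} after the (purely cosmetic) relabeling of the coefficient multiplying $[\tr\alpha]^2$.

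I do not expect a genuine obstacle. The only point deserving a careful line is the absence of cross terms in the isotropic representation — i.e.\ that $\so(3)$, the trace-free symmetric tensors, and $\mathbb{R}\cdot\id_3$ are pairwise non-isomorphic as $\SO(3)$-modules, so that no invariant quadratic form mixes them — and, if one takes the second route, the bookkeeping of the scalar factors coming from $\tr\alpha=2\,\tr\Gamma$ and from the $\id_3$-terms in the expansion of $\lVert\sym\Gamma\rVert^2$; both are routine.
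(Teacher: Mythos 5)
Your proposal is correct and follows essentially the same route as the paper: a quadratic isotropic energy in $\alpha$ is, via the linear isotropic Nye bijection \eqref{Nye1}, a quadratic isotropic energy in $\Gamma$, which by \cite{EremeyevPi} has the form \eqref{isoE}, and substituting the Nye identities back yields \eqref{giso}. Your explicit bookkeeping ($\lVert\sym\Gamma\rVert^2=\lVert\sym\alpha\rVert^2-\tfrac14[\tr\alpha]^2$, $[\tr\Gamma]^2=\tfrac14[\tr\alpha]^2$) and the remark that the coefficient of $[\tr\alpha]^2$ is only determined up to the harmless relabeling of $b_3$ simply make explicit what the paper leaves implicit.
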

\begin{proof} We use again that  a quadratic energy in $\alpha$ is a quadratic energy in $\Gamma$, i.e., due to \cite{EremeyevPi},  is given by 
	\eqref{isoE}. 
The proof is complete after using the  Nye's formulas \eqref{Nye1}.
\end{proof}
Since   a quadratic isotropic positive definite energy in terms of $\overline{ R}^T{\rm D} \overline{ R}$ is equivalent with  a quadratic positive definite isotropic energy in terms of $\Gamma$, considering $\alpha$ is equivalent with considering ${\mathfrak{K}}$, as long as a quadratic isotropic energy is used.  As we will see in the present paper, a quadratic curvature energy in terms of $\alpha$ is suitable for explicit calculations of homogenized curvature energy for shell models via the $\Gamma$-convergence method.

	\subsubsection{	The curvature (in fact: bending) tensor $\widehat{\mathfrak{K}}=\big(\overline{ R }^T \D  (\overline{ R }.e_1)\,|\,\overline{ R }^T \D  (\overline{ R }.e_2)\,|\,\overline{ R }^T \D  (\overline{ R }.e_3)\big)\in \mathbb{R} ^{3\times 9}$}
	
	The curvature tensor $\widehat{\mathfrak{K}}=\big(\overline{ R }^T \D  (\overline{ R }.e_1)\,|\,\overline{ R }^T \D  (\overline{ R }.e_2)\,|\,\overline{ R }^T \D  (\overline{ R }.e_3)\big)\in \mathbb{R} ^{3\times 9}$ is motivated from the flat Cosserat shell model
	\cite{neff2004geometricallyhabil,neff2004geometrically}. Indeed, in this setting, the general bending energy term appearing from an engineering ansatz through the thickness of the shell appears as
	\begin{align}\label{isoinc}
	\frac{h^3}{12}\left(	\mu\,\lVert \, \mathrm{sym} (\overline{R}^T\D (\overline{ R }.e_3))\rVert^2 +  \mu_{\rm c}\,\lVert \mathrm{skew} (\overline{R}^T\D (\overline{ R }.e_3))\rVert^2 +\,\dfrac{\lambda\,\mu}{\lambda+2\mu}\,\big[ \mathrm{tr}   (\overline{R}^T\D (\overline{ R }.e_3))\big]^2\right).
	\end{align}

	Motivated by this, in earlier papers \cite{neff2004geometrically,neff2004geometricallyhabil,neff2007geometrically,neff2007geometrically1}, as generalization of $\overline{R}^T\D(\overline{ R }.e_3)$, 	the  third order elastic Cosserat curvature tensor is considered in the form
		\begin{align}
		\widehat{\mathfrak{K}}=\big(\overline{ R }^T \D  (\overline{ R }.e_1)\,|\,\overline{ R }^T \D  (\overline{ R }.e_2)\,|\,\overline{ R }^T \D  (\overline{ R }.e_3)\big)=\overline{ R }^T \big(\D  (\overline{ R }.e_1), \D  (\overline{ R }.e_2), \D  (\overline{ R }.e_3)\big) \in \mathbb{R} ^{3\times 9} ,
		\end{align}
	treating the three directions $e_1, e_2, e_3$ equally,	and the curvature energy is taken to be
		\begin{align}
	\widehat{	W}_{\rm{curv}}( \widehat{\mathfrak{K}})=a_1\|\sym \,\widehat{\mathfrak{K}}\|^2+a_2\|\skew \widehat{\mathfrak{K}}\|^2+a_3[\tr (\widehat{\mathfrak{K}})]^2.
		\end{align}
We mean by $\,\widehat{\mathfrak{K}}.e_i=\overline{ R }^T \D  (\overline{ R }.e_i)\,$ and
\begin{align}\|\text{sym}\,\widehat{\mathfrak{K}}\|^2&=\displaystyle \sum_{i=1}^3\|\text{sym}\,\widehat{\mathfrak{K}}.e_i\|^2= \sum_{i=1}^3\|\text{sym}\,(\overline{ R }^T \D  (\overline{ R }.e_i))\|^2, \notag\\ \|\text{skew}\,\widehat{\mathfrak{K}}\|^2&=\displaystyle \sum_{i=1}^3\|\text{skew}\,\widehat{\mathfrak{K}}.e_i\|^2= \sum_{i=1}^3\|\text{skew}\,(\overline{ R }^T \D  (\overline{ R }.e_i))\|^2,\\
[\tr(\widehat{\mathfrak{K}})]^2&=\displaystyle \sum_{i=1}^3[\tr(\widehat{\mathfrak{K}}.e_i)]^2= \sum_{i=1}^3[\tr(\overline{ R }^T \D  (\overline{ R }.e_i))]^2\notag.\end{align}
		However, this curvature energy has now three abstract orthogonal preferred directions, which makes it only cubic and not isotropic, as we will see.
		
		There does not exist an analysis to show that this is the most general form of an isotropic energy depending on ${\widehat{\mathfrak{K}}}$.
		 Actually, as we will see in the following, for general positive  values for $(\alpha_i^1, \alpha_i^2,\alpha_i^3)$ the energies of the form \eqref{isoinc} are anisotropic.  We simplify the discussion  by considering only the energy $\lVert  \overline{R}^T\D(\overline{R}.e_i)\rVert^2$. After the transformation $\overline{R}\to \overline{R}\,\overline{Q}$, with $\overline{Q}=(e_1\,|\,e_3\,|\, e_2)\in {\rm SO}(3)$ constant, we have 
		\begin{align}
		\lVert  \overline{Q}^T\overline{R}^T\D(\overline{R}\,\overline{Q}.e_3)\rVert^2=	\lVert  \overline{R}^T\D(\overline{R}\,\overline{Q}.e_3)\rVert^2=\lVert  \overline{R}^T\D(\overline{R}.e_2)\rVert^2\neq \lVert  \overline{R}^T\D(\overline{R}.e_3)\rVert^2.
		\end{align}

		As regards the direct relation between $\|\widehat{\mathfrak{K}}\|^2=\sum_{i=1}^3\big\|\overline{ R}^T\D (\overline{ R}.e_i)\big\|^2$ and $\|\alpha\|^2$ we have
	\begin{align}
	\sum_{i=1}^3\big\|\overline{ R}^T\D (\overline{ R}.e_i)\big\|^2_{\R^{3\times 3}}&=\sum_{i=1}^3\big\|\D (\overline{ R}.e_i)\big\|^2_{\R^{3\times 3}}=\big\|\D \overline{ R}\big\|^2_{\R^{3\times 3}}\notag\\&=1\cdot\|\dev\,\sym \overline{ R}^T\Curl \overline{ R}\|^2_{\R^{3\times 3}}+1\cdot\|\skew \overline{ R}^T\Curl \overline{ R}\|^2_{\R^{3\times 3}}+\frac{1}{12}\cdot[\tr(\overline{ R}^T\Curl \overline{ R})]^2\\
	&\geq c_+ \big\|\Curl \overline{ R} \,\big\|^2_{\R^{3\times 3}},\notag
	\end{align}
	where $c_+>0$ is a constant.
		Since 	a coercive   curvature energy in $\widehat{\mathfrak{K}}$  is completely   controlled by $\sum_{i=1}^3\big\|\overline{ R}^T\D (\overline{ R}.e_i)\big\|^2_{\R^{3\times 3}},
	$ a  positive definite quadratic isotropic energy  of the form in terms of $\alpha=\overline{ R}^T\Curl \overline{ R}$  (equivalently on the wryness tensor $\Gamma$)
	is a  positive definite  quadratic form in terms of $\|\overline{ R}^T{\rm D} \overline{ R}\|^2_{\R^{3\times 3\times 3}}$, and vice versa. 
	Hence, a quadratic positive definite energy in terms of $\widehat{\mathfrak{K}}$ is energetically equivalent with a quadratic positive definite energy in terms of $\alpha$ (and $\Gamma$).

Let us remark that 	both
 $\Big(\partial_{x_1}\overline{R} \,|\,\partial_{x_2}\overline{R} \,|\,\partial_{x_3}\overline{R} \Big)\in \R^{3\times 9},$ $\big(\D (\overline{R} .e_1),\D (\overline{R} .e_2),\D   (\overline{R} .e_3)\big)$ contain the same terms, $\frac{\partial \overline{R}_{ij}}{\partial x_k}$, $i,j,k=1,2,3$, but differently ordered. By multiplication with $\overline{R}^T$ of $\Big(\partial_{x_1}\overline{R} \,|\,\partial_{x_2}\overline{R} \,|\,\partial_{x_3}\overline{R} \Big)\in \R^{3\times 9}$ and $\big(\D (\overline{R} .e_1)\,|\,\D (\overline{R} .e_2)\,|\,\D   (\overline{R} .e_3)\big)$ we obtain $\mathfrak{K}$ and $\widehat{\mathfrak{K}}$, respectively, i.e. 
	$
	\mathfrak{K}_{ijk}=\overline{R}_{li}\frac{\partial \overline{R}_{lj}}{\partial x_k}, \ 	\widehat{\mathfrak{K}}_{ijk}=\overline{R}_{li}\frac{\partial \overline{R}_{lk}}{\partial x_j}, i,j,k=1,2,3
	$
	and
	$
	\mathfrak{K}_{ijk}=	\widehat{\mathfrak{K}}_{ikj},\  i,j,k=1,2,3.
	$ 
	 We have the following relation between $\widehat{\mathfrak{K}}$ and $\Gamma$
	 \begin{align}\label{kGiG}
	 \widehat{\mathfrak{K}}_{ijk}=\mathfrak{K}_{ikj}=-\epsilon_{ikl} \Gamma_{lj}, \qquad \qquad \Gamma_{ik}=\frac{1}{2}\,\sum_{r,l=1}^3
	 \epsilon_{ilr} {\mathfrak{K}}_{lrk}=\frac{1}{2}\,\sum_{r,l=1}^3
	 \epsilon_{ilr} {\widehat{\mathfrak{K}}}_{lkr}.
	 \end{align}

	 Let us introduce the operator $\mathcal{A}:\mathbb{R}^{3\times 9}\to \mathbb{R}^{3\times 9}$ by
	 $
	 (\mathcal{A}.\widehat{\mathfrak{K}})_{ijk}=\widehat{\mathfrak{K}}_{ikj}.
	 $

	\begin{proposition}
		A general isotropic energy depending on $\widehat{\mathfrak{K}}=\big(\overline{ R }^T \D  (\overline{ R }.e_1)\,|\,\overline{ R }^T \D  (\overline{ R }.e_2)\,|\,\overline{ R }^T \D  (\overline{ R }.e_3)\big)\in \mathbb{R}^{3\times 9}$ has the form
	\begin{align}\label{curvegkt}
	\widetilde{W}(\widehat{\mathfrak{K}})
	=\,&b_1\,\lVert \textrm{\rm sym} \,\axl(\mathcal{A}.\widehat{\mathfrak{K}})\rVert^2+b_2\,\lVert \textrm{\rm skew}\,\axl(\mathcal{A}.\widehat{\mathfrak{K}})\rVert^2+b_3\,
	[{\rm tr}(\axl(\mathcal{A}.\widehat{\mathfrak{K}}))]^2.
	\end{align}
	\end{proposition}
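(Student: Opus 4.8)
The plan is to treat the operator $\mathcal{A}$ as a change of variables reducing the claim to Proposition~\ref{propiso}, equivalently to the Eremeyev--Pietraszkiewicz representation \eqref{isoE} of quadratic isotropic energies of the wryness tensor $\Gamma$.

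First I would record the identity $\mathcal{A}.\widehat{\mathfrak{K}}=\mathfrak{K}$, which is immediate from the component relation $\mathfrak{K}_{ijk}=\widehat{\mathfrak{K}}_{ikj}$ in \eqref{kGiG} and the definition $(\mathcal{A}.\widehat{\mathfrak{K}})_{ijk}=\widehat{\mathfrak{K}}_{ikj}$. In particular $\mathcal{A}$ is a linear involution of $\mathbb{R}^{3\times9}$ carrying the admissible $\widehat{\mathfrak{K}}$-tensors (those with $A_{ijk}=-A_{kji}$, as holds for $\widehat{\mathfrak{K}}_{ijk}=-\widehat{\mathfrak{K}}_{kji}$) bijectively onto the admissible $\mathfrak{K}$-tensors (those whose three $3\times3$ blocks lie in $\so(3)$), on which $\axl$ is defined; hence $\axl(\mathcal{A}.\widehat{\mathfrak{K}})=\axl\mathfrak{K}=\Gamma$ by \eqref{kG}, and $\widehat{\mathfrak{K}}\mapsto\axl(\mathcal{A}.\widehat{\mathfrak{K}})$ is a linear isomorphism onto $\mathbb{R}^{3\times3}$.

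The main step is an equivariance argument. Since $\mathcal{A}.\widehat{\mathfrak{K}}[\overline{R}]=\mathfrak{K}[\overline{R}]$ holds identically in the rotation field $\overline{R}$, the bijection $\mathcal{A}$ intertwines the transformation induced on $\widehat{\mathfrak{K}}$-space by any change $\overline{R}\mapsto\overline{R}^{\,\overline{Q}}$ with the corresponding transformation on $\mathfrak{K}$-space; using that the wryness tensor $\Gamma$, and hence $\mathfrak{K}=\anti\Gamma$, transforms as a proper second-order tensor, $\Gamma\mapsto\overline{Q}^T\Gamma\,\overline{Q}$, under an isotropy transformation (right multiplication of $\overline{R}$ by a constant $\overline{Q}\in\mathrm{SO}(3)$ together with the induced rotation of the reference coordinates), one sees that the isotropy action on the admissible $\widehat{\mathfrak{K}}$-space is conjugate, via $\axl\circ\mathcal{A}$, to the $\mathrm{SO}(3)$-conjugation action on $\mathbb{R}^{3\times3}$. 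Consequently $\widetilde{W}$ is an isotropic quadratic form in $\widehat{\mathfrak{K}}$ if and only if $\widetilde{W}\circ\mathcal{A}^{-1}=\widetilde{W}\circ\mathcal{A}$ is an isotropic quadratic form in $\mathfrak{K}$, which by Proposition~\ref{propiso} (i.e.\ by \eqref{kG} and \cite{EremeyevPi}) equals $b_1\,\lVert\sym\,\Gamma\rVert^2+b_2\,\lVert\skew\,\Gamma\rVert^2+b_3\,[\tr(\Gamma)]^2$ with $\Gamma=\axl\mathfrak{K}$. Re-substituting $\Gamma=\axl(\mathcal{A}.\widehat{\mathfrak{K}})$ gives \eqref{curvegkt}; the converse inclusion is automatic since $\Gamma\mapsto\overline{Q}^T\Gamma\,\overline{Q}$ preserves $\lVert\sym\,\Gamma\rVert^2$, $\lVert\skew\,\Gamma\rVert^2$ and $\tr(\Gamma)$, in accordance with the earlier remark that $\widehat{\mathfrak{K}}$ by itself is only cubic and not isotropic.

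The hard part is exactly this equivariance step, together with fixing the ambient space: "isotropic energy depending on $\widehat{\mathfrak{K}}$" has to be understood on the $9$-dimensional admissible subspace carved out by $\widehat{\mathfrak{K}}=\big(\overline{R}^T\D(\overline{R}.e_1)\,|\,\overline{R}^T\D(\overline{R}.e_2)\,|\,\overline{R}^T\D(\overline{R}.e_3)\big)$, not on all of $\mathbb{R}^{3\times9}$ --- on the full space additional invariants coming from the non-physical directions would appear --- and one must verify that the index permutation $\mathcal{A}$ truly conjugates the induced action into the standard conjugation action $\Gamma\mapsto\overline{Q}^T\Gamma\,\overline{Q}$ for which \eqref{isoE} is the general invariant. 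Everything else is bookkeeping with the component identities already collected in \eqref{kGi}--\eqref{kGiG}.
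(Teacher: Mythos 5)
Your proposal is correct and follows essentially the same route as the paper's proof: identify $\mathcal{A}.\widehat{\mathfrak{K}}=\mathfrak{K}$ and then invoke Proposition~\ref{propiso} (via the Eremeyev--Pietraszkiewicz representation \eqref{isoE}). The paper's own proof is extremely terse --- it just states that an isotropic quadratic energy in $\widehat{\mathfrak{K}}$ is given by the $\axl\mathfrak{K}$ formula and closes with ``since $\mathcal{A}.\widehat{\mathfrak{K}}=\mathfrak{K}$'' --- whereas you usefully spell out the equivariance of the isotropy action under the index permutation $\mathcal{A}$ and the restriction to the admissible subspace, the points the paper leaves implicit.
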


\begin{proof} Using \eqref{propiso}, we have that  a quadratic isotropic energy in ${\widehat{\mathfrak{K}}}$ is given by 
	\begin{align}\label{curvegk}
\widetilde{W}(\widehat{\mathfrak{K}})
=\,&b_1\,\lVert \textrm{\rm sym} \,\axl \mathfrak{K}\rVert^2+b_2\,\lVert \textrm{\rm skew}\,\axl \mathfrak{K}\rVert^2+b_3\,
[{\rm tr}(\axl \mathfrak{K})]^2.
\end{align}
Since  $
\mathcal{A}.\widehat{\mathfrak{K}}=\mathfrak{K}$, the proof is complete.
\end{proof}

Let us remark that comparing to \eqref{isoinc}, a general isotropic energy depending on $\widehat{\mathfrak{K}}$, i.e., \eqref{curvegkt} is different since we have the summation of different products between the elements of $\overline{R}^T$ and $\D \overline{R}$, due to the action of the axial operator together with the operator $\mathcal{A}$. 

From \eqref{isoinc} one could  obtain an isotropic energy by setting 
	\begin{align}\label{isoincav}
\int_{\widetilde{Q}\in {\rm SO}(3)}\frac{h^3}{12}\sum_{i=1}^3\left(	\mu\,\lVert \, \mathrm{sym} (\overline{R}^T\D (\widetilde{Q}.e_i))\rVert^2 +  \mu_{\rm c}\,\lVert \mathrm{skew} (\overline{R}^T\D (\widetilde{Q}.e_i))\rVert^2 +\,\dfrac{\lambda\,\mu}{\lambda+2\mu}\,\big[ \mathrm{tr}   (\overline{R}^T\D (\widetilde{Q}.e_i))\big]^2\right),
\end{align}i.e., averaging over all directions.

\section{Homogenized curvature energy for the flat Cosserat-shell model via $\Gamma$-convergence}\label{flat}\setcounter{equation}{0}

 Let us consider an elastic material which  in its reference configuration fills the three dimensional \textit{flat shell-like thin} domain $\Omega_h=\omega\times\big[-\frac{h}{2},\frac{h}{2}\big]$, and $\omega\subset \mathbb{R}^2$  a bounded domain with Lipschitz boundary $\partial\omega$. The scalar $0<h\ll1$ is called \textit{thickness} of the shell.
  
 Due to the discussion from  Subsection \ref{conections}, in this paper we consider the Cosserat-curvature energy in terms of the wryness tensor $\Gamma$ in the form
 \begin{align}\label{curvenerg}
 \widetilde{W}_{\rm curv}(\Gamma)&\,=\,\mu\,{L}_{\rm c}^2 \left( b_1\,\lVert\sym \,\Gamma\rVert^2+b_2\,\lVert \skew \,\Gamma\rVert^2+\,b_3\,
 [{\rm tr}(\Gamma)]^2\right)\,.
 \end{align}

  In order to apply  the methods of $\Gamma$-convergence for constructing the variational problem on $\omega$ of the flat Cosserat-shell model,  the first step is to transform our problem further from $\Omega_h$ to a \textit{domain} with fixed thickness  $\Omega_1=\omega\times[-\frac{1}{2},\frac{1}{2}]\subset\mathbb{R}^3,\;\omega\subset \mathbb{R}^2$. For this goal, scaling of the variables (dependent/independent) would be the first step. 
   In all our computations the mark $\cdot^\natural$ indicates the nonlinear scaling and the mark $\cdot_h$ indicates that the assigned quantity depends on the thickness $h$.
  In a first step we will apply the nonlinear scaling to the deformation. For $\Omega_1=\omega\times\Big[-\displaystyle\frac{1}{2},\frac{1}{2}\Big]\subset\mathbb{R}^3$, $\omega\subset \mathbb{R}^2$, we define the scaling transformations
 \begin{align}\label{nonlinear sca}
 \nonumber\zeta\col&\; \eta\in\Omega_1\mapsto \mathbb{R}^3\,, \qquad \zeta(\eta_1,\eta_2,\eta_3):=(\eta_1,\eta_2,h\,\eta_3)\,,\quad 
 \zeta^{-1}\col\;x\in\Omega_h\mapsto\mathbb{R}^3\,, \qquad\zeta^{-1}(x_1,x_2,x_3):=(x_1,x_2,\frac{x_3}{h})\,,
 \end{align}
 with $\zeta(\Omega_1)=\Omega_h$. By using the  above transformations (\ref{nonlinear sca}) we obtain the formula for the transformed deformation $\varphi$ as 
 \begin{align}
 \nonumber\varphi(x_1,x_2,x_3)&=\varphi^\natural(\zeta^{-1}(x_1,x_2,x_3))\, \quad \forall x\in \Omega_h \,; \qquad \varphi^\natural(\eta)=\varphi(\zeta(\eta))\quad \forall \eta\in \Omega_1\,,\\
 \D _x\varphi(x_1,x_2,x_3)&=\begin{pmatrix}\vspace{0.2cm}
 \partial_{\eta_1}\varphi_{1}^\natural(\eta)& \partial_{\eta_2}\varphi_{1}^\natural(\eta)&\displaystyle\frac{1}{h}\partial_{\eta_3}\varphi_{1}^\natural(\eta)\\\vspace{0.2cm}
 \partial_{\eta_1}\varphi_{2}^\natural(\eta)& \partial_{\eta_2}\varphi_{2}^\natural(\eta)&\displaystyle\frac{1}{h}\partial_{\eta_3}\varphi_{2}^\natural(\eta)\\\vspace{0.2cm}
 \partial_{\eta_1}\varphi_{3}^\natural(\eta)& \partial_{\eta_2}\varphi_{3}^\natural(\eta)&\displaystyle\frac{1}{h}\partial_{\eta_3}\varphi_{3}^\natural(\eta)\\
 \end{pmatrix}=\D _\eta^h\varphi^\natural(\eta)=:F^\natural_h \,.
 \end{align}
 Now we will do the same process for the microrotation tensor $  \overline{R}_h^\natural\col\Omega_1\rightarrow \text{SO(3)}$ 
 \begin{align}
  \overline{R}(x_1,x_2,x_3)&=  \overline{R}_h^\natural(\zeta^{-1}(x_1,x_2,x_3))\, \qquad \forall x\in \Omega_h\,;\; \;  \overline{R}_h^\natural(\eta)= \overline{R}(\zeta(\eta))\,,\; \quad\forall \eta\in \Omega_1\,.
 \end{align}

 With this, the non-symmetric stretch tensor expressed in a point of $\Omega_1$ is given by
 \begin{align}
 \UEN =  \overline{R}_h^{\natural,T}F^\natural_h =  \overline{R}_h^{\natural,T}\D _\eta^h\varphi^\natural(\eta)\,.
 \end{align}
 and
 \begin{align}\label{gs}
 \Gamma_{e,h}^\natural& =\Big(\mathrm{axl}( \overline{R}_h^{\natural,T}\,\partial_{\eta_1}  \overline{R}_h^\natural)\,|\, \mathrm{axl}( \overline{R}_h^{\natural,T}\,\partial_{\eta_2}  \overline{R}_h^\natural)\,|\,\frac{1}{h}\mathrm{axl}( \overline{R}_h^{\natural,T}\,\partial_{\eta_3}  \overline{R}_h^\natural)\,\Big)
 .
 \end{align}

 
 The next step, in order to apply the $\Gamma$-convergence technique, is to transform the minimization problem onto the \textit{fixed domain} $\Omega_1$, which is independent of the thickness $h$.
 According to the results from the previous subsection, we have found that the original three-dimensional variational problem \eqref{energy refeconf} is equivalent to the following minimization problem on $\Omega_1$
 \begin{align}\label{energy model.fictituos.ond omega1}
 I^\natural_h(\varphi^\natural,\D _\eta^h\varphi^\natural,  \overline{R}_h^\natural,\Gamma_{e,h}^\natural)
 & =\int_{\Omega_1}\; h \;\Big[\Big({{W}}_{\rm mp}(U_{e,h}^\natural)+{\widetilde{W}}_{\rm curv}(\Gamma_{e,h}^\natural)\Big)\Big]\;dV_\eta\quad 
 \mapsto \quad \min\;\text{w.r.t}\; (\varphi^\natural,  \overline{R}_h^\natural)\,,
 \end{align}
 where 
 \begin{align}\label{boun co}
 \nonumber {{W}}_{\rm mp}(U_{e,h}^\natural)&=\;\mu\,\norm{\sym (U_{e,h}^\natural-\id_3)}^2+\mu_c\,\norm{\skew (U_{e,h}^\natural-\id_3)}^2 + \frac{\lambda}{2}[\tr(\sym(U_{e,h}^\natural-\id_3))]^2\,,\\
 \widetilde{W}_{\rm curv}(\Gamma_{e,h}^\natural)&\,=\,\mu\, {L}_{\rm c}^2 \left( a_1\,\lVert \dev \,\text{sym} \,\Gamma_{e,h}^\natural\rVert^2+a_2\,\lVert \text{skew} \,\Gamma_{e,h}^\natural\rVert^2+\,a_3\,
 [{\rm tr}(\Gamma_{e,h}^\natural)]^2\right)\\
&\,=\,\mu\,{L}_{\rm c}^2 \left( b_1\,\lVert\sym \,\Gamma_{e,h}^\natural\rVert^2+b_2\,\lVert \skew \,\Gamma_{e,h}^\natural\rVert^2+\,b_3\,
 [{\rm tr}(\Gamma_{e,h}^\natural)]^2\right)\,,\notag
 \end{align}
 where  $a_1=b_1,$ $a_2=b_2$ and $a_3=\frac{12b_3-b_1}{3}$.
  
  In the article \cite{neff2007geometrically1} one aim of the authors was to  to find the $\Gamma$-limit of the family of functional which is related to 
  \begin{align}\label{energyinnewarea 1}
  \mathcal{I}_h^\natural(\varphi^\natural,  \D ^h_\eta \varphi^\natural, \overline{R}_h^\natural,\Gamma_h^\natural) &=
  \begin{cases}
  \displaystyle\frac{1}{h}\,I_h^\natural(\varphi^\natural,  \D ^h_\eta \varphi^\natural,  \overline{R}_h^\natural,\Gamma_h^\natural)\quad &\text{if}\;\; (\varphi^\natural, \overline{R}_h^\natural)\in \mathcal{S}',
  \\
  +\infty\qquad &\text{else in}\; X,
  \end{cases}
  \end{align}
  where 
   \begin{align}\label{sets}
  X&:=\{(\varphi^\natural, \overline{R}_h^\natural)\in {\rm L}^2(\Omega_1,\mathbb{R}^3)\times {\rm L}^{2}(\Omega_1, \text{SO(3)})\}\,,\\
  \nonumber\mathcal{S}'&:=\{(\varphi, \overline{R}_h)\in {\rm H}^{1}(\Omega_1,\mathbb{R}^3)\times { {{\rm H}^{1}}}(\Omega_1,\text{SO(3)})\,\big|\,\, \varphi|_{\partial \Omega_1}(\eta)=\varphi^\natural_d(\eta)\}\,.
  \end{align}
 That means, to obtain an energy functional expressed only in terms of the weak limit of a  subsequence of  $(\varphi_{h_j}^\natural, \overline{R}_{h_j}^\natural)\in X$, when $h_j$ goes to zero. In other words, as we will see, to construct an energy function depending only on quantities definite on the planar midsurface $\omega$.
 However, in \cite{neff2007geometrically1}  the authors have considered a different Cosserat-curvature energy based on the Cosserat-curvature tensor
  $\widehat{\mathfrak{K}}=(\overline{R}^T\D  (\overline{R}.e_1) R_1,\overline{R}^T\D   (\overline{R}.e_2),\overline{R}^T\D   (\overline{R}.e_3))\in \R^{3\times 3\times3}$, which in the simplest form  reads
 \begin{align}\label{nch}
 \widehat{W}_{\text{curv}}(\widehat{\mathfrak{K}})=\mu\frac{L_c^2}{12}\Big(\alpha_1\norm{\sym \widehat{\mathfrak{K}}}^2+\alpha_2\norm{\skew\widehat{\mathfrak{K}}}^2+\alpha_3\tr(\widehat{\mathfrak{K}})^2\Big)\,,
 \end{align}
and no explicit form of the homogenized curvature energy has been computed (perhaps even not possible to be computed for curved initial configuration). In fact $\widehat{\mathfrak{K}}$ is not isotropic and has to be avoided in an isotropic model, as seen above.

 In order to construct  the $\Gamma$-limit there is the need to solve two auxiliary optimization problems, i.e.,
 \begin{itemize}
 	\item[O1:] the  optimization problem which for each pair $(m, \overline{R}_{0})$, where $m:\omega\to \mathbb{R}^3$,  $\overline{R}_{0}:\omega\to {\rm SO}(3)$ defines the homogenized membrane energy
 \begin{align}
 {W}_{\rm mp}^{{\rm hom, plate}}
 (\mathcal{E}^{\rm plate}_{m,\overline{R}_{0}}):=\inf_{\widetilde{d}\in \mathbb{R}^3} {W}_{\rm mp}\Big(\overline{R}^{T}_{0}(\D  m|\widetilde{d})\Big)=\inf_{\widetilde{d}\in \mathbb{R}^3} {W}_{\rm mp}\Big(\mathcal{E}^{\rm plate}_{m,\overline{R}_{0}}-(0|0|\widetilde{d})\Big).
 \end{align}
 where 
 $
 \mathcal{E}^{\rm plate}_{m,\overline{R}_{0}} :=\overline{R}^{T}_{0}(\D  m |0)-\id_2^\flat\,
$
denotes the \textit{elastic strain tensor} for the flat Cosserat-shell model.
	\item[O2:] the  optimization problem which for each   $\overline{R}_{0}:\omega\to {\rm SO}(3)$ defines the   homogenized curvature energy 
	\begin{align} \widetilde{W}^{{\rm hom, plate}}_{\rm curv}(\mathcal{K}_{\overline{R}_{0}}^{\rm plate} ):&=\widetilde{W}_{\rm curv}^*\Big(\mathrm{axl}(\overline{R}_{0}^T\,\partial_{\eta_1} \overline{R}_{0})\,|\, \mathrm{axl}(\overline{R}_{0}^T\,\partial_{\eta_2} \overline{R}_{0})\,|\,\axl \,(A^*)\,\Big)\\
	&=\inf_{A\in \mathfrak{so}(3)}\widetilde{W}_{\rm curv}\Big(\mathrm{axl}(\overline{R}_{0}^T\,\partial_{\eta_1} \overline{R}_{0})\,|\, \mathrm{axl}(\overline{R}_{0}^T\,\partial_{\eta_2} \overline{R}_{0})\,|\,\axl \,(A)\,\Big)\notag
	\end{align}
	where 
	$
	\mathcal{K}_{\overline{R}_{0}}^{\rm plate}  :\,=\,  \Big(\mathrm{axl}(\overline{R}_{0}^T\,\partial_{x_1} \overline{R}_{0})\,|\, \mathrm{axl}(\overline{R}_{0}^T\,\partial_{x_2} \overline{R}_{0})\,|0\Big)\not\in {\rm Sym}(3) $
	denotes	the elastic  bending-curvature tensor for the flat Cosserat-shell model.
	\end{itemize}

The first optimisation problem O1 was solved in \cite{neff2007geometrically1} giving
   \begin{align}\label{hominf_2D}
  {W}_{\rm mp}^{{\rm hom, plate}}
  (\mathcal{E}^{\rm plate}_{m,\overline{R}_{0}})
  &=W_{\mathrm{shell}}\big(   [\mathcal{E}_{m,\overline{Q}_{e,0}}^{\rm plate}]^{\parallel} \big)+\frac{2\,\mu\, \,  \mu_{\rm c}}{\mu_c\,+\mu\,}\lVert [\mathcal{E}_{m,\overline{Q}_{e,0}}^{\rm plate}]^{\perp}\rVert^2,\notag
  \end{align}
 with the orthogonal decomposition in the tangential plane and in the normal direction\footnote{Here, for vectors $\xi,\eta\in\mathbb{R}^n$, we have considered the tensor product
 	$(\xi\otimes\eta)_{ij}=\xi_i\,\eta_j$. Let us denote by $\overline{R}_i$ the columns of the matrix $\overline{R}$, i.e., $\overline{R}=\big(\overline{R}_1\,|\,\overline{R}_2\,|\,\overline{R}_3\big)$, $\overline{R}_i=\overline{R}\, e_i$.
 	Since $(\id_3-e_3\otimes e_3)\overline{R}^{T}=\big(\overline{R}_1\,|\,\overline{R}_2\,|\,0\big)^T$, it follows that $ [\mathcal{E}_{m,\overline{Q}_{e,0}}^{\rm plate}]^\parallel=\big(\overline{R}_1\,|\,\overline{R}_2\,|\,0\big)^T(\D  m|0)-\id_2^\flat=\big(\big(\overline{R}_1\,|\,\overline{R}_2\big)^T\,\D  m\big)^\flat-\id_2^\flat$, while
 	\begin{align}
 	[\mathcal{E}_{m,\overline{Q}_{e,0}}^{\rm plate}]^\perp=(0\,|\,0\,|\,\overline{R}_3)^T(\D  m|0)=\begin{footnotesize}
 	\begin{pmatrix}
 	0&0&0\\
 	0&0&0\\
 	\langle \overline{R}_3,\partial_{x_1}m\rangle&\langle \overline{R}_3,\partial_{x_2}m\rangle&0
 	\end{pmatrix}\,.
 	\end{footnotesize}
 	\end{align} }
 \begin{align}
 [\mathcal{E}_{m,\overline{R}_{0}}^{\rm plate}]^\parallel\coloneqq(\id_3-e_3\otimes e_3) \,[\mathcal{E}_{m,\overline{R}_{0}}^{\rm plate}], \qquad \qquad [\mathcal{E}_{m,\overline{R}_{0}}^{\rm plate}]^\perp\coloneqq(e_3\otimes e_3)\,[\mathcal{E}_{m,\overline{R}_{0}}^{\rm plate}]\,
 \end{align}
  and
 \begin{align}
 W_{\mathrm{shell}}\big(   [\mathcal{E}_{m,\overline{R}_{0}}^{\rm plate}]^{\parallel} \big)=
 \, \mu\,\lVert  \mathrm{sym}\,   \,[\mathcal{E}_{m,\overline{R}_{0}}^{\rm plate}]^{\parallel}\rVert^2 +  \mu_{\rm c}\,\lVert \mathrm{skew}\,   \,[\mathcal{E}_{m,}^{\rm plate}]^{\parallel}\rVert^2 +\,\dfrac{\lambda\,\mu\,}{\lambda+2\,\mu\,}\,\big[ \mathrm{tr}    ([\mathcal{E}_{m,\overline{R}_{0}}^{\rm plate}]^{\parallel})\big]^2.
 \end{align}

  As regards the second optimization problem O2, in \cite{neff2007geometrically1} the authors had to solve a similar problem but corresponding to a curvature energy given by \eqref{nch}, i.e., the dimensionally reduced homogenized curvature energy is defined through the
  \begin{align}
  W_{\text{curv}}^{\text{hom, plate}}(\mathcal{A})=\inf_{u,v,w\in \mathbb{R}^3}\widehat{W}_{\text{curv}}\Big((\mathcal{A}e_1|u),(\mathcal{A}e_2|v),(\mathcal{A}e_3|w)\Big)\,,
  \end{align}
  where $\mathcal{A}:=(\overline{R}_{0}^T(\partial_{x_1}(\overline{R}_{0}e_1)|\partial_{x_2}(\overline{R}_{0}e_1)),\overline{R}_{0}^T(\partial_{x_1}(\overline{R}_{0})e_2|\partial_{x_2}(\overline{R}_{0}e_2)),\overline{R}_{0}^T(\partial_{x_1}(\overline{R}_{0}e_3)|\partial_{x_2}(\overline{R}_{0}e_3)))$. In this representation, calculating the homogenized energy looks more difficult and it was not explicitly done.

  In this section we show that  considering the curvature energy depending on the Cosserat-curvature tensor $\alpha$  (equivalently on the three-dimensional wryness tensor $\Gamma$) the calculation of the homogenized curvature energy (i.e., the solution of O2) is  easier and analytically achievable. 
   \begin{theorem}
   	The homogenized curvature energy for a flat Cosserat-shell model  is given by
   	\begin{align}\label{homflat}
   	W_{\text{\rm curv}}^{\text{\rm hom,plate}}(\Gamma  )
   	&=\mu L_c^2\Big(b_1\norm{\sym[\mathcal{K}_{\overline{R}_{0}}^{\rm plate}]^\parallel}^2+b_2\norm{\skew [\mathcal{K}_{\overline{R}_{0}}^{\rm plate}]^\parallel}^2+\frac{b_1b_3}{(b_1+b_3)}\tr([\mathcal{K}_{\overline{R}_{0}}^{\rm plate}]^\parallel)^2+\frac{2\,b_1b_2}{b_1+b_2}\norm{[\mathcal{K}_{\overline{R}_{0}}^{\rm plate}]^\perp}\Big)\notag
   	\end{align}
   	with
   	the orthogonal decomposition in the tangential plane and in the normal direction
   	\begin{align}
   	\mathcal{K}_{\overline{R}_{0}}^{\rm plate}=[\mathcal{K}_{\overline{R}_{0}}^{\rm plate}]^\parallel+[\mathcal{K}_{\overline{R}_{0}}^{\rm plate}]^\perp, \qquad  [\mathcal{K}_{\overline{R}_{0}}^{\rm plate}]^\parallel\coloneqq e_3\otimes e_3 \,\mathcal{K}_{\overline{R}_{0}}^{\rm plate}, \qquad [\mathcal{K}_{\overline{R}_{0}}^{\rm plate}]^\perp\coloneqq(\id_3-e_3\otimes e_3) \,\mathcal{K}_{\overline{R}_{0}}^{\rm plate}\,.
   	\end{align}
   \end{theorem}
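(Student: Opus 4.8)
The plan is to reduce the optimization problem O2 to an elementary coordinatewise minimization. Since $\axl\colon\mathfrak{so}(3)\to\mathbb{R}^3$ is a linear bijection, the infimum over $A\in\mathfrak{so}(3)$ equals the infimum over the free vector $d:=\axl A\in\mathbb{R}^3$, so one has to compute $\inf_{d\in\mathbb{R}^3}\widetilde{W}_{\rm curv}\bigl((\gamma_1\,|\,\gamma_2\,|\,d)\bigr)$, where $\gamma_i:=\axl(\overline{R}_0^T\partial_{\eta_i}\overline{R}_0)$ for $i=1,2$ are the first two columns of $\mathcal{K}_{\overline{R}_{0}}^{\rm plate}$. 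First I would write the competitor $\Gamma=(\gamma_1\,|\,\gamma_2\,|\,d)$ entrywise and expand $\widetilde{W}_{\rm curv}(\Gamma)/(\mu L_c^2)=b_1\|\sym\Gamma\|^2+b_2\|\skew\Gamma\|^2+b_3[\tr\Gamma]^2$ using $\|\sym\Gamma\|^2=\sum_i\Gamma_{ii}^2+\tfrac12\sum_{i<j}(\Gamma_{ij}+\Gamma_{ji})^2$, $\|\skew\Gamma\|^2=\tfrac12\sum_{i<j}(\Gamma_{ij}-\Gamma_{ji})^2$ and $\tr\Gamma=\Gamma_{11}+\Gamma_{22}+\Gamma_{33}$.

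The point — and the reason $\Gamma$ (equivalently $\alpha$) is so much more convenient than $\widehat{\mathfrak K}$ here — is that the three free scalars $d_1=\Gamma_{13}$, $d_2=\Gamma_{23}$, $d_3=\Gamma_{33}$ enter in pairwise disjoint groups of terms: $d_1$ only through $\tfrac{b_1}2\bigl(d_1+(\gamma_1)_3\bigr)^2+\tfrac{b_2}2\bigl(d_1-(\gamma_1)_3\bigr)^2$, $d_2$ only through the analogous expression with $(\gamma_2)_3$, and $d_3$ only through $b_1 d_3^2+b_3\bigl((\gamma_1)_1+(\gamma_2)_2+d_3\bigr)^2$, all the remaining terms being independent of $d$. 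Hence the minimization decouples into three independent scalar strictly convex quadratics — strict convexity, uniqueness of the minimizer and attainment of the infimum all following from $b_1,b_2,b_3>0$ — each solved by setting the derivative to zero. The first two give the common minimal value $\tfrac{2b_1b_2}{b_1+b_2}(\gamma_i)_3^2$ at $d_i^\ast=\tfrac{b_2-b_1}{b_1+b_2}(\gamma_i)_3$, and the third gives $\tfrac{b_1b_3}{b_1+b_3}\bigl((\gamma_1)_1+(\gamma_2)_2\bigr)^2$ at $d_3^\ast=-\tfrac{b_3}{b_1+b_3}\bigl((\gamma_1)_1+(\gamma_2)_2\bigr)$.

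Finally I would reassemble these three minima with the $d$-independent remainder $b_1\bigl[(\gamma_1)_1^2+(\gamma_2)_2^2+\tfrac12\bigl((\gamma_2)_1+(\gamma_1)_2\bigr)^2\bigr]+\tfrac{b_2}2\bigl((\gamma_2)_1-(\gamma_1)_2\bigr)^2$ and recognize the result in invariant form. Splitting $\mathcal{K}_{\overline{R}_{0}}^{\rm plate}$ as in the membrane case into its tangential (in-plane) part $[\mathcal{K}_{\overline{R}_{0}}^{\rm plate}]^\parallel=(\id_3-e_3\otimes e_3)\,\mathcal{K}_{\overline{R}_{0}}^{\rm plate}$ and its normal row $[\mathcal{K}_{\overline{R}_{0}}^{\rm plate}]^\perp=(e_3\otimes e_3)\,\mathcal{K}_{\overline{R}_{0}}^{\rm plate}$, a direct computation of the Frobenius norms gives $b_1\|\sym[\mathcal{K}]^\parallel\|^2=b_1[(\gamma_1)_1^2+(\gamma_2)_2^2+\tfrac12((\gamma_2)_1+(\gamma_1)_2)^2]$, $b_2\|\skew[\mathcal{K}]^\parallel\|^2=\tfrac{b_2}2((\gamma_2)_1-(\gamma_1)_2)^2$, $\tr([\mathcal{K}]^\parallel)^2=((\gamma_1)_1+(\gamma_2)_2)^2$ and $\|[\mathcal{K}]^\perp\|^2=(\gamma_1)_3^2+(\gamma_2)_3^2$, so that multiplying by $\mu L_c^2$ produces exactly the asserted formula (the $\parallel/\perp$ labels being understood in this membrane-consistent sense, with the last norm squared). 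There is no genuinely hard step here: the only thing to watch is the bookkeeping of which entries of $\Gamma$ are free, and the non-interaction of $d_1,d_2,d_3$, which is precisely what the orthogonal $\sym/\skew/\tr$ decomposition underlying $\widetilde{W}_{\rm curv}$ makes transparent — and which is lost when one works with $\widehat{\mathfrak K}$ instead.
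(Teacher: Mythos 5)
Your proof is correct and follows essentially the same route as the paper: parametrize the third column of the wryness tensor by a free vector, expand the quadratic form componentwise, observe that the three free scalars decouple into three independent one-dimensional strictly convex minimizations, solve each by setting the derivative to zero, and reassemble the minimal values with the $d$-independent remainder before recognizing the result in the $\parallel/\perp$ decomposition. You also correctly flag two typographical slips in the theorem statement that the paper's own proof text does not fix: the definitions of $[\mathcal{K}_{\overline{R}_{0}}^{\rm plate}]^\parallel$ and $[\mathcal{K}_{\overline{R}_{0}}^{\rm plate}]^\perp$ are interchanged relative to the membrane convention (one needs $[\mathcal{K}]^\parallel=(\id_3-e_3\otimes e_3)\mathcal{K}$ and $[\mathcal{K}]^\perp=(e_3\otimes e_3)\mathcal{K}$ for the formula to hold), and the final term should read $\norm{[\mathcal{K}_{\overline{R}_{0}}^{\rm plate}]^\perp}^2$ rather than $\norm{[\mathcal{K}_{\overline{R}_{0}}^{\rm plate}]^\perp}$.
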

   
   \begin{proof}
 Let us define $\Gamma_0=(\Gamma_1^0\,|\,\Gamma_2^0\,|\,\Gamma_3^0):=\Big(\mathrm{axl}(\overline{R}_{0}^T\,\partial_{\eta_1} \overline{R}_{0})\,|\, \mathrm{axl}(\overline{R}_{0}^T\,\partial_{\eta_2} \overline{R}_{0})\,|\,\axl \,(A)\,\Big)$. Then the homogenized curvature energy turns out to be 
  \begin{align}
  W_{\text{curv}}^{\text{hom}}((\Gamma_1^0\,|\,\Gamma_2^0))=\widetilde{W}_{\text{curv}}(\Gamma_1^0\,|\,\Gamma_2^0|c^*)=\inf_{c\in\R^3 } W_{\text{curv}}((\Gamma_1^0\,|\,\Gamma_2^0|c))\,.
  \end{align}
  
  By using the relation (\ref{curvenerg}), we start to do the calculations for the $\sym$, $\skw$ and trace parts as
  \begin{align}
  \sym\Gamma^0 =\begin{pmatrix}
  \Gamma  _{11}^0&\frac{\Gamma  _{12}^0+\Gamma  _{21}^0}{2}&\frac{c_1+\Gamma _{31}^0 }{2}\\\frac{\Gamma  _{21}^0+\Gamma  _{12}^0}{2}&\Gamma  _{22}^0&\frac{c_2+\Gamma _{32}^0 }{2}\\\frac{\Gamma  _{31}^0+c_1}{2}&\frac{\Gamma  _{32}^0+c_2}{2}&c_3
  \end{pmatrix}\,,\qquad \skw\,\Gamma^0 =\begin{pmatrix}
  0&\frac{\Gamma  _{12}^0-\Gamma  _{21}^0}{2}&\frac{c_1-\Gamma _{31}^0 }{2}\\\frac{\Gamma  _{21}^0-\Gamma  _{12}^0}{2}&0&\frac{c_2-\Gamma _{32}^0 }{2}\\\frac{\Gamma  _{31}^0-c_1}{2}&\frac{\Gamma  _{32}^0-c_2}{2}&0
  \end{pmatrix}\,,
  \end{align}
  and
$
  \tr(\Gamma_0 )=(\Gamma^0 _{11} +\Gamma^0 _{22} +c_3)\,.
$
  We have
  \begin{align}
  W_{\rm{curv}}(\Gamma_0  )&=\mu L_c^2\Big(b_1\big((\Gamma^0 _{11})^{2}+\frac{1}{2}(\Gamma _{12}^0 +\Gamma^0 _{21} )^2+\frac{1}{2}(c_1+\Gamma _{31}^{0})^2+(\Gamma _{22}^{0})^2+\frac{1}{2}(c_2+\Gamma _{32}^0 )^2+c_3^2\big)\\
  \nonumber&\qquad\ \ \ \ +b_2\big(\frac{1}{2}(\Gamma _{12}^{0}-\Gamma _{21}^{0})^2+\frac{1}{2}(c_1-\Gamma _{31}^0 )^2+\frac{1}{2}(c_2-\Gamma _{32}^0 )^2\big)+b_3(\Gamma _{11}^0 +\Gamma _{22}^0 +c_3)^2\Big)\,.
  \end{align}
 But  this is an easy optimization problem in $\mathbb{R}^3$. Indeed, the stationary points are  
  \begin{align}
  \nonumber 0&=\frac{\partial W_{\rm{curv}}(\Gamma  _0)}{\partial c_1}=b_1(c_1+\Gamma _{31}^0 )+b_2(c_1-\Gamma _{31}^0 )=(b_1+b_2)c_1+(b_1-b_2)\Gamma _{31}^0 \quad\Rightarrow \quad c_1=\frac{b_2-b_1}{b_1+b_2}\Gamma _{31}^0 \,,\\
  0&=\frac{\partial W_{\rm{curv}}(\Gamma _0 )}{\partial c_2}=b_1(c_2+\Gamma _{32}^0 )+b_2(c_2-\Gamma _{32}^0 )=(b_1+b_2)c_2+(b_1-b_2)\Gamma _{32}^0 \quad\Rightarrow \quad c_2=\frac{b_2-b_1}{b_1+b_2}\Gamma _{32} ^0\,,\\
  \nonumber 0&=\frac{\partial W_{\rm{curv}}(\Gamma  _0)}{\partial c_3}=b_1c_3+b_3(\Gamma _{11}^0 +\Gamma _{22}^0 +c_3)\quad\Rightarrow \quad c_3=\frac{-b_3}{b_1+b_3}(\Gamma _{11}^0 +\Gamma _{22}^0 )\,,
  \end{align}
and  the matrix defining the quadratic function in $c_1,c_2,c_3$ is positive definite, this stationary point is the minimizer, too.
  By inserting the unknowns inside $W_{\rm{curv}}$ we find $W_{\text{curv}}^{\text{hom, plate}}$ given by
  \begin{align}
  W_{\text{curv}}^{\text{hom, plate}}(\Gamma  )&=\mu L_c^2\Big(b_1\big((\Gamma _{11}^{0})^{2}+(\Gamma _{22}^{0})^{2}+(\frac{-b_3}{b_1+b_3}(\Gamma _{11}^0 +\Gamma _{22}^0 ))^2+\frac{1}{2}(\Gamma _{21}^0 +\Gamma _{12}^0 )^2+\frac{1}{2}(\frac{b_2-b_1}{b_1+b_2}\Gamma _{31}^0 +\Gamma _{31}^0 )^2\notag\\
  &\qquad\qquad +\frac{1}{2}(\frac{b_2-b_1}{b_1+b_2}\Gamma _{32}^0 +\Gamma _{32}^0 )^2\big)+b_2\big(\frac{1}{2}(\Gamma _{12}^0 -\Gamma _{21}^0 )^2+\frac{1}{2}(\frac{b_2-b_1}{b_1+b_2}\Gamma  _{31}^0-\Gamma _{31}^0 )^2\nonumber\\
  \nonumber&\qquad\qquad+\frac{1}{2}(\frac{b_2-b_1}{b_1+b_2}\Gamma _{32}^0 -\Gamma _{32}^0 )^2\big)+b_3\big((\Gamma _{11}^0 +\Gamma _{22}^0 )-\frac{b_3}{b_1+b_3}(\Gamma _{11}^0 +\Gamma _{22}^0 )\big)^2\Big)\\
  \nonumber&=\mu L_c^2\Big(b_1\big((\Gamma _{11}^0)^{2}+(\Gamma _{22}^{0})^{2}+\frac{b_3^2}{(b_1+b_3)^2}(\Gamma _{11}^0 +\Gamma _{22}^0 )^2+\frac{1}{2}(\Gamma _{21}^0 +\Gamma _{12}^0 )^2+2\frac{b_2^2}{(b_1+b_2)^2}(\Gamma _{31}^0)^{2}\\
  &\qquad\qquad +2\frac{b_2^2}{(b_1+b_2)^2}(\Gamma _{32}^0)^{2}\big)+b_2\big(\frac{1}{2}(\Gamma _{12}^0 -\Gamma _{21}^0 )^2+2\frac{b_1^2}{(b_1+b_2)^2}(\Gamma ^0)^{2}_{31}+2\frac{b_1^2}{(b_1+b_2)^2}(\Gamma _{32}^0)^{2}\big)\nonumber\\
  &\qquad\qquad+b_3\frac{b_1^2}{(b_1+b_3)^2}(\Gamma _{11}^0 +\Gamma _{22}^0 )^2\Big)\\
  \nonumber&=\mu L_c^2\Big(b_1\big((\Gamma _{11}^0)^{2}+(\Gamma _{22}^{0})^ {2}\big)+\frac{b_1b_3}{(b_1+b_3)}(\Gamma _{11}^0 +\Gamma _{22}^0 )^2+\frac{b_1}{2}(\Gamma _{21}^0 +\Gamma _{12}^0 )^2\\
  \nonumber &\quad\qquad\qquad+2\frac{b_1b_2}{(b_1+b_2)}(\Gamma _{31}^{0})^2+2\frac{b_1b_2}{(b_1+b_2)}(\Gamma _{32}^0)^2+\frac{b_2}{2}(\Gamma _{21}^0 -\Gamma _{12}^0 )^2\Big)\\
  \nonumber&=\mu L_c^2\Big(b_1\norm{\sym \Gamma _\square }^2+b_2\norm{\skew \Gamma _\square }^2+\frac{b_1b_3}{(b_1+b_3)}\tr(\Gamma _\square )^2+\frac{2b_1b_2}{(b_1+b_2)}\norm{\matr{\Gamma _{31}^{0}\\\Gamma _{32}^{0}}}^2\Big)\,,
  \end{align}
  where $\Gamma _\square =\matr{\Gamma _{11}^0 &\Gamma _{12}^0 \\\Gamma _{21}^0 &\Gamma _{22}^0 }$. 
  
  Therefore, the homogenized curvature energy for the flat Cosserat-shell model  is
  \begin{align}\label{homflat}
  W_{\text{curv}}^{\text{hom, plate}}(\Gamma  )&=\mu L_c^2\Big(b_1\norm{\sym \Gamma _\square }^2+b_2\norm{\skew \Gamma _\square }^2+\frac{b_1b_3}{(b_1+b_3)}\tr(\Gamma _\square )^2+\frac{2b_1b_2}{(b_1+b_2)}\norm{\matr{\Gamma _{31}^{0}\\\Gamma _{32}^{0}}}^2\Big)\,\\
  &=\mu L_c^2\Big(b_1\norm{\sym[\mathcal{K}_{\overline{R}_{0}}^{\rm plate}]^\parallel}^2+b_2\norm{\skew [\mathcal{K}_{\overline{R}_{0}}^{\rm plate}]^\parallel}^2+\frac{b_1b_3}{(b_1+b_3)}\tr([\mathcal{K}_{\overline{R}_{0}}^{\rm plate}]^\parallel)^2+\frac{2\,b_1b_2}{b_1+b_2}\norm{[\mathcal{K}_{\overline{R}_{0}}^{\rm plate}]^\perp}\Big)\notag
  \end{align}
  with
  the orthogonal decomposition in the tangential plane and in the normal direction
  \begin{align*}
 \hspace{1.5cm} \mathcal{K}_{\overline{R}_{0}}^{\rm plate}=[\mathcal{K}_{\overline{R}_{0}}^{\rm plate}]^\parallel+[\mathcal{K}_{\overline{R}_{0}}^{\rm plate}]^\perp, \qquad [\mathcal{K}_{\overline{R}_{0}}^{\rm plate}]^\parallel\coloneqq e_3\otimes e_3 \,\mathcal{K}_{\overline{R}_{0}}^{\rm plate}, \qquad  [\mathcal{K}_{\overline{R}_{0}}^{\rm plate}]^\perp\coloneqq(\id_3-e_3\otimes e_3) \,\mathcal{K}_{\overline{R}_{0}}^{\rm plate}\,.\hspace{1.5cm} \qedhere
  \end{align*}
  \end{proof}
  Since we have now the explicit form of both homogenized energies (membrane and curvature),  we are  ready to indicate the exact form of the $\Gamma$-limit of the sequence of functionals $\mathcal{J}_{h_j}\col X\rightarrow \overline{\mathbb{R}}$  and to provide the following  theorem, see \cite{Maryam}
  \begin{theorem}\label{Theorem_homo_plate}
  	Assume   the boundary data satisfy the conditions
  	\begin{equation}\label{2n5}
  	\varphi^\natural_d=\varphi_d\big|_{\partial \Omega_1} \text{(in the sense of traces) for } \ \varphi_d\in {\rm H}^1(\Omega_1;\mathbb{R}^3), \qquad \Gamma_1\subset \partial
  	\end{equation}
  and let the constitutive parameters satisfy 
  	\begin{align}
  	\mu\,>0, \qquad\quad \kappa>0, \qquad\quad \mu_{\rm c}> 0,\qquad\quad a_1>0,\quad\quad a_2>0,\qquad\quad a_3>0\,.
  	\end{align} 
  	Then, 
  	for any sequence $(\varphi_{h_j}^\natural, \overline{R}_{h_j}^{\natural})\in X$ such that $(\varphi_{h_j}^\natural, \overline{R}_{h_j}^{\natural})\rightarrow(\varphi_0, \overline{R}_{0})$ as $h_j\to 0$,	
  	the sequence of functionals $\mathcal{I}_{h_j}\col X\rightarrow \overline{\mathbb{R}}$  from \eqref{energyinnewarea 1}
  	\textit{ $\Gamma$-converges} to the limit energy  functional $\mathcal{I}_0\col X\rightarrow \overline{\mathbb{R}}$ defined by
  	\begin{equation}\label{couple}
  	\mathcal{I}_0 (m,\overline{R}_{0}) =
  	\begin{cases}\dd
  	\int_{\omega} [ {W}_{\rm mp}^{{\rm hom, plate}}(\mathcal{E}_{m,\overline{R}_{0}}^{\rm plate})+\widetilde{W}_{\rm curv}^{{\rm hom, plate}}(\mathcal{K}_{\overline{R}_{0}}^{\rm plate})]\; d\omega\quad &\text{if}\quad (m,\overline{R}_{0})\in \mathcal{S}_\omega' \,,
  	\\
  	+\infty\qquad &\text{else in}\; X,
  	\end{cases}
  	\end{equation}
  	where 
  	\begin{align}\label{approxi}
  	m(x_1,x_2)&:=\varphi_0 (x_1,x_2)=\lim_{h_j\to 0} \varphi_{h_j}^\natural(x_1,x_2,\frac{1}{h_j}x_3), \qquad \overline{Q}_{e,0}(x_1,x_2)=\lim_{h_j\to 0} \overline{R}_{h_j}^\natural(x_1,x_2,\frac{1}{h_j}x_3),\notag\\
  	\mathcal{E}_{m,\overline{R}_{0}}^{\rm plate}&=\overline{R}^{T}_{0}(\D  m|0)-\id_2^\flat\,,\qquad
  	\mathcal{K}_{\overline{R}_{0}}^{\rm plate} =\Big(\mathrm{axl}(\overline{R}^{T}_{0}\,\partial_{x_1} \overline{R}_{0})\,|\, \mathrm{axl}(\overline{R}^{T}_{0}\,\partial_{x_2} \overline{R}_{0})\,|0\Big)\not\in {\rm Sym}(3)\,,\notag
  	\end{align}
  	and
  	\begin{align}
  	\nonumber {W}_{\rm mp}^{{\rm hom, plate}}
  	(\mathcal{E}_{m,\overline{R}_{0}}^{\rm plate})&=
  	\, \mu\,\lVert  \mathrm{sym}\,   \,[\mathcal{E}_{m,\overline{R}_{0} }^{\rm plate}]^{\parallel}\rVert^2 +  \mu_{\rm c}\,\lVert \mathrm{skew}\,   \,[\mathcal{E}_{m,\overline{R}_{0} }^{\rm plate}]^{\parallel}\rVert^2 +\,\dfrac{\lambda\,\mu\,}{\lambda+2\,\mu\,}\,\big[ \mathrm{tr}    ([\mathcal{E}_{m,\overline{R}_{0} }^{\rm plate}]^{\parallel})\big]^2 +\frac{2\,\mu\, \,  \mu_{\rm c}}{\mu_c\,+\mu\,}\norm{[\mathcal{E}_{m,\overline{R}_{0} }^{\rm plate}]^Tn_0}^2\\
  	&=W_{\mathrm{shell}}\big(   [\mathcal{E}_{m,\overline{R}_{0} }^{\rm plate}]^{\parallel} \big)+\frac{2\,\mu\, \,  \mu_{\rm c}}{\mu_c\,+\mu\,}\lVert [\mathcal{E}_{m,\overline{R}_{0} }^{\rm plate}]^{\perp}\rVert^2,\\  \widetilde{W}^{{\rm hom, plate}}_{\rm curv}(\mathcal{K}_{\overline{R}_{0}}^{\rm plate})
  	\nonumber&=\inf_{A\in \mathfrak{so}(3)}\widetilde{W}_{\rm curv}\Big(\mathrm{axl}(\overline{R}^{T}_{0}\,\partial_{\eta_1} \overline{R}_{0})\,|\, \mathrm{axl}(\overline{R}^{T}_{0}\,\partial_{\eta_2} \overline{R}_{0})\,|\,\axl(A)\,\Big)[(\D _x \Theta)^\natural(0)]^{-1}\\
  	\nonumber&=\mu L_c^2\Big(b_1\norm{\sym[\mathcal{K}_{\overline{R}_{0}}^{\rm plate}]^\parallel}^2+b_2\norm{\skew [\mathcal{K}_{\overline{R}_{0}}^{\rm plate}]^\parallel}^2+\frac{b_1b_3}{(b_1+b_3)}\tr([\mathcal{K}_{\overline{R}_{0}}^{\rm plate}]^\parallel)^2+\frac{2\,b_1b_2}{b_1+b_2}\norm{[\mathcal{K}_{\overline{R}_{0}}^{\rm plate}]^\perp}\Big)\,.
  	\end{align}
  \end{theorem}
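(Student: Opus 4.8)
The plan is to combine the abstract $\Gamma$-convergence machinery already worked out for flat Cosserat shells in \cite{neff2007geometrically1} (and in its curved-configuration extension \cite{Maryam}) with the two explicit optimization results now at our disposal: the membrane optimization O1 solved in \cite{neff2007geometrically1} and the curvature optimization O2 solved in the preceding Theorem. The point is that the $\Gamma$-limit produced by that machinery has precisely the form \eqref{couple}, with a membrane density ${W}_{\rm mp}^{{\rm hom, plate}}(\mathcal{E}_{m,\overline{R}_{0}}^{\rm plate})$ and a curvature density $\widetilde{W}^{{\rm hom, plate}}_{\rm curv}(\mathcal{K}_{\overline{R}_{0}}^{\rm plate})$ that, a priori, are only \emph{defined} as infima — over $\widetilde d\in\mathbb{R}^3$ in O1 and over $A\in\mathfrak{so}(3)$ in O2. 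Since these two densities enter the limit functional only through their values, the remaining work is to substitute the closed forms. One first records that the hypotheses $a_1,a_2,a_3>0$, together with the identities $a_1=b_1$, $a_2=b_2$, $a_3=\tfrac{12b_3-b_1}{3}$, force $b_1,b_2,b_3>0$; hence the preceding Theorem applies and yields the explicit $\widetilde{W}^{{\rm hom, plate}}_{\rm curv}$, while \cite{neff2007geometrically1} gives the split of ${W}_{\rm mp}^{{\rm hom, plate}}$ into the tangential shell part $W_{\mathrm{shell}}\big([\mathcal{E}_{m,\overline{R}_{0}}^{\rm plate}]^{\parallel}\big)$ and the normal correction $\tfrac{2\mu\mu_c}{\mu+\mu_c}\|[\mathcal{E}_{m,\overline{R}_{0}}^{\rm plate}]^{\perp}\|^2$. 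Inserting both densities into \eqref{couple}, and noting that for a flat reference configuration $(\D_x\Theta)^\natural(0)=\id_3$ so that the chart factor $[(\D_x\Theta)^\natural(0)]^{-1}$ is trivial, produces exactly the claimed $\mathcal{I}_0$.

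For completeness I would also recall why the two infimum problems O1 and O2 arise, i.e.\ sketch the $\Gamma$-convergence proof itself. After the nonlinear rescaling to $\Omega_1$ one has $\mathcal{I}_{h_j}=\int_{\Omega_1}\big[W_{\rm mp}(\UJN)+\widetilde W_{\rm curv}(\Gamma_{e,h_j}^\natural)\big]\,dV$, with the $\tfrac1{h}$-weighted third columns of $\D_\eta^h\varphi^\natural$ and of $\Gamma_{e,h}^\natural$ as in \eqref{gs}. For the $\liminf$ inequality one may assume $\liminf\mathcal{I}_{h_j}<\infty$ and pass to a subsequence of bounded energy; coercivity of $W_{\rm mp}$ and $\widetilde W_{\rm curv}$ (ensured by $\mu,\mu_c>0$ and $b_1,b_2,b_3>0$) gives $L^2$ bounds, and the $\tfrac1{h_j}$ weights force $\partial_{\eta_3}\varphi_{h_j}^\natural\to 0$ and $\partial_{\eta_3}\overline R_{h_j}^\natural\to 0$ in $L^2$, so the limits $m=\varphi_0$ and $\overline R_0$ are independent of $\eta_3$ and the convergence upgrades to weak $H^1$. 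Extracting weak $L^2$-limits $\widetilde d$ of $\tfrac1{h_j}\partial_{\eta_3}\varphi_{h_j}^\natural$ and $A$ of $\tfrac1{h_j}\big(\overline R_{h_j}^{\natural,T}\partial_{\eta_3}\overline R_{h_j}^\natural\big)$, weak lower semicontinuity of the convex integrands bounds $\liminf\mathcal{I}_{h_j}$ below by $\int_{\Omega_1}\big[W_{\rm mp}(\overline R_0^T(\D m|\widetilde d))+\widetilde W_{\rm curv}(\,\cdots\,|\axl A)\big]\,dV$, and passing to the infima over $\widetilde d$ and $A$ gives $\int_\omega\big[{W}_{\rm mp}^{{\rm hom, plate}}+\widetilde W^{{\rm hom, plate}}_{\rm curv}\big]$. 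For the $\limsup$ inequality one produces a recovery sequence from the pointwise optimal fields $\widetilde d^*(x_1,x_2)$ and $A^*(x_1,x_2)$ — which are exactly the minimizers computed explicitly in O1 and in the preceding Theorem — by setting $\varphi_{h_j}^\natural(\eta):=m(\eta_1,\eta_2)+h_j\eta_3\,\widetilde d^*(\eta_1,\eta_2)$ and $\overline R_{h_j}^\natural(\eta):=\overline R_0(\eta_1,\eta_2)\exp\!\big(h_j\eta_3\,\anti(\axl A^*(\eta_1,\eta_2))\big)$, checking that $\UJN\to\overline R_0^T(\D m|\widetilde d^*)$ and $\Gamma_{e,h_j}^\natural\to(\,\cdots\,|\axl A^*)$ in $L^2$, and concluding by continuity of $W_{\rm mp}$, $\widetilde W_{\rm curv}$ and dominated convergence that $\lim\mathcal{I}_{h_j}=\mathcal{I}_0(m,\overline R_0)$.

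The step I expect to demand the most care is the recovery-sequence construction near the Dirichlet part $\Gamma_1$ of the lateral boundary: the affine-in-$\eta_3$ ansatz above need not match $\varphi_{h_j}^\natural|_{\partial\Omega_1}=\varphi_d^\natural$, and $\overline R_{h_j}^\natural$ must be corrected so as to remain $\mathrm{SO}(3)$-valued while still converging strongly. As in \cite{neff2007geometrically1,Maryam} this is dealt with by a boundary-layer modification — mollifying $\widetilde d^*$ and $A^*$ and interpolating to the prescribed data on a neighbourhood of $\Gamma_1$ of width shrinking with $h_j$ — whose energetic cost is $o(1)$. The remaining technical points, namely the measurability and $L^\infty$-boundedness of the minimizers $\widetilde d^*$, $A^*$ as functions of $(x_1,x_2)$ (immediate from the explicit linear formulas in O1 and in the preceding Theorem) and the interchange of infimum and integral, are routine. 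With these in place, substituting the two explicit homogenized densities into \eqref{couple} gives the theorem.
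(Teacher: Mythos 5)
Your proposal takes essentially the same route as the paper: the paper itself gives no self-contained proof of this theorem, but obtains it by citing the $\Gamma$-convergence machinery of \cite{neff2007geometrically1,Maryam} (which uses only implicit properties of the homogenized densities) and then substituting the explicit membrane density from O1 and the explicit homogenized curvature density from the preceding theorem, exactly as you do; the paper's Conclusion additionally observes the flat result as the $\Theta=\mathrm{id}$ specialization of the curved Theorem \ref{Theorem homo}, which is equivalent to your direct substitution. Your extra sketch of the $\liminf$/recovery-sequence argument is consistent with the cited machinery and goes beyond what the paper records, so there is no gap relative to the paper's own proof.
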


  \section{Homogenized curvature energy for the curved Cosserat-shell model via $\Gamma$-convergence}\label{curved}\setcounter{equation}{0}
  
  In this section we consider the case of a curved Cosserat-shell model and we give the explicit form and the detailed calculation  of the homogenized curvature energy. In comparison to the flat Cosserat-shell model, the calculations are more  complicated.
  Hence, let us consider an elastic material which  in its reference configuration fills the three dimensional \textit{shell-like thin} domain $\Omega_\xi\subset R ^3$, i.e., we  assume that there exists a $C^1$-diffeomorphism $\Theta\col R ^3\rightarrow  R ^3$ with $\Theta(x_1,x_2,x_3):=(\xi_1,\xi_2,\xi_3)$ such that $\Theta(\Omega_h)=\Omega_\xi$ and $\omega_\xi=\Theta(\omega\times\{0\})$, where  $\Omega_h\subset  R ^3$ for $\Omega_h=\omega\times\big[-\frac{h}{2},\frac{h}{2}\big]$, with $\omega\subset  R ^2$  a bounded domain with Lipschitz boundary $\partial\omega$. The scalar $0<h\ll1$ is called \textit{thickness} of the shell, while the domain $\Omega_h$ is called \textit{fictitious Cartesian configuration} of the body. In fact, in this paper,  we consider the following diffeomorphism $\Theta\col R ^3\rightarrow  R ^3$ which describes the curved surface of the shell
  \begin{align}
  \Theta(x_1,x_2,x_3)=y_0(x_1,x_2)+x_3\,n_0(x_1,x_2)\,,
  \end{align}
  where $y_0\col\omega\rightarrow  R ^3$ is a $C^2(\omega)$-function  and $n_0=\frac{\partial_{x_1}y_0\times \partial_{x_2}y_0}{\norm{\partial_{x_1}y_0\times \partial_{x_2}y_0}}$ is the unit normal vector on $\omega_\xi$. 
  Remark that
  \begin{align}
  \D  _x \Theta(x_3)&\,=\,(\D  y_0|n_0)+x_3(\D  n_0|0) \, \  \forall\, x_3\in \left(-\frac{h}{2},\frac{h}{2}\right),
  \ \ 
  \D  _x \Theta(0)\,=\,(\D  y_0|\,n_0),\ \ [\D  _x \Theta(0)]^{-T}\, e_3\,=n_0,
  \end{align}
  and  $\det\D  _x \Theta(0)=\det(\D  y_0|n_0)=\sqrt{\det[ (\D  y_0)^T\D  y_0]}$ represents the surface element.  We also have the polar decomposition $\D _x\Theta(0)=Q_0\,U_0$, where
  \begin{align}
  Q_0=\text{polar}(\D _x\Theta(0))=\text{polar}([\D _x\Theta(0)]^{-T})\in \text{SO(3)}\quad\text{and}\quad U_0\in \text{Sym}^+(3)\,.
  \end{align}
  
  The first step in our  shell model is to transform the problem to a variational problem defined on the fictitious flat configuration $\Omega_h=\omega\times\big[-\frac{h}{2},\frac{h}{2}\big]$. 
  The next step, in order to apply the $\Gamma$-convergence technique, is to transform the minimization problem onto the \textit{fixed domain} $\Omega_1$, which is independent of the thickness $h$. These two steps were done in \cite{Maryam}, the three-dimensional problem \eqref{energy model} (corresponding to the Cosserat-curvature tensor $
 \alpha$) being equivalent to the following minimization problem on $\Omega_1$
  \begin{align}\label{energy model.fictituos.ond omega1}
  \nonumber &I^\natural_h(\varphi^\natural,\D _\eta^h\varphi^\natural,\overline{Q}_{e,h}^\natural,\Gamma_{e,h}^\natural)
  =\int_{\Omega_1} \Big({W}_{\rm mp}(U_{e,h}^\natural)+{\widetilde{W}}_{\rm curv}(\Gamma_{e,h}^\natural)\Big)\det(\D _\eta\zeta(\eta))\det((\D _x\Theta)^\natural(\eta_3) )\;dV_\eta\\
  & =\int_{\Omega_1}\; h \;\Big[\Big({{W}}_{\rm mp}(U_{e,h}^\natural)+{\widetilde{W}}_{\rm curv}(\Gamma_{e,h}^\natural)\Big)\det((\D _x\Theta)^\natural (\eta_3))\Big]\;dV_\eta
  \mapsto \min\;\text{w.r.t}\; (\varphi^\natural,\overline{Q}_{e,h}^\natural)\,,
  \end{align}
  where 
  \begin{align}\label{boun co}
  \nonumber {{W}}_{\rm mp}(U_{e,h}^\natural)&=\;\mu\,\norm{\sym (U_{e,h}^\natural-\id_3)}^2+\mu_c\,\norm{\skew (U_{e,h}^\natural-\id_3)}^2 + \frac{\lambda}{2}[\tr(\sym(U_{e,h}^\natural-\id_3))]^2\,,\\
  \widetilde{W}_{\rm curv}(\Gamma_{e,h}^\natural)&\,=\,\mu\, {L}_{\rm c}^2 \left( b_1\,\lVert \,\text{sym} \,\Gamma_{e,h}^\natural\rVert^2+b_2\,\lVert \text{skew} \,\Gamma_{e,h}^\natural\rVert^2+\,b_3\,
  [{\rm tr}(\Gamma_{e,h}^\natural)]^2\right)\,,
  \\
U_{e,h}^\natural &=\overline{Q}_{e,h}^{\natural,T}F^\natural_h [(\D _x\Theta)^\natural (\eta_3)]^{-1}=\overline{Q}_{e,h}^{\natural,T}\D _\eta^h\varphi^\natural(\eta)[(\D _x\Theta)^\natural(\eta_3) ]^{-1}\,,\notag\\
   \Gamma_{e,h}^\natural& =\Big(\mathrm{axl}(\overline{Q}_{e,h}^{\natural,T}\,\partial_{\eta_1} \overline{Q}_{e,h}^\natural)\,|\, \mathrm{axl}(\overline{Q}_{e,h}^{\natural,T}\,\partial_{\eta_2} \overline{Q}_{e,h}^\natural)\,|\,\frac{1}{h}\mathrm{axl}(\overline{Q}_{e,h}^{\natural,T}\,\partial_{\eta_3} \overline{Q}_{e,h}^\natural)\,\Big)[(\D _x\Theta)^\natural (\eta_3)]^{-1},\notag
  \end{align}
  with $(\D _x\Theta)^\natural(\eta_3) $ the nonlinear scaling (see \eqref{nonlinear sca}) of $\D _x\Theta$, $F^\natural_h=\D _\eta^h\varphi^\natural$  the nonlinear scaling of the gradient of the mapping 
  $
  \varphi\col \Omega_h\rightarrow \Omega_c\,,\  \varphi(x_1,x_2,x_3)=\varphi_\xi(\Theta(x_1,x_2,x_3))\,
  $, $\overline{Q}_{e,h}^\natural$ the nonlinear scaling of  the \textit{elastic microrotation}  $\overline{Q}_{e}\col\Omega_h\rightarrow \text{SO(3)}$  defined by
  $
  \overline{Q}_{e}(x_1,x_2,x_3):=\overline{R}_\xi(\Theta(x_1,x_2,x_3))\,. 
  $  Since for $\eta_3=0$ the values of $\D _x \Theta$, $Q_0$, $U_0$  expressed in terms of $(\eta_1, \eta_2,0)$ and $(x_1,x_2,0)$ coincide, we will omit the sign $\cdot ^\natural$ and we will understand from the context the variables into discussion, i.e.,  
  \begin{align}
  (\D _x \Theta)(0)&:=(\D  y_0\,|n_0)=(\D _x \Theta)^\natural(\eta_1,\eta_2,0)\equiv (\D _x \Theta)(x_1,x_2,0), \\ Q_0(0)&:=Q_0^\natural(\eta_1,\eta_2,0)\equiv Q_0(x_1,x_2,0), \qquad\qquad U_0(0):=U_0^\natural(\eta_1,\eta_2,0)\equiv U_0(x_1,x_2,0).\notag
  \end{align}

  In order to construct the $\Gamma$-limit of the rescaled energies
 \begin{align}\label{energyinnewarea}
 \mathcal{I}_h^\natural(\varphi^\natural,  \D ^h_\eta \varphi^\natural,  \overline{R}_h^\natural,\Gamma_h^\natural) 
 &=
 \begin{cases}
 \displaystyle\frac{1}{h}\,I^\natural_h(\varphi^\natural,\D _\eta^h\varphi^\natural,\overline{Q}_{e,h}^\natural,\Gamma_{e,h}^\natural)\quad &\text{if}\;\; (\varphi^\natural,  \overline{Q}_{e,h}^\natural)\in \mathcal{S}',
 \\
 +\infty\qquad &\text{else in}\; X,
 \end{cases}
 \end{align}
 for curved Cosserat-shell model we have to solve the following {\bf four} {\bf (not only two as for flat Cosserat-shell models)} auxiliary optimization problem.
\begin{itemize}
	\item[O1:] For  each $\varphi^\natural:\Omega_1\rightarrow \mathbb{R}^3$ and $\overline{Q}_{e,h}^\natural:\Omega_1\rightarrow{\rm SO}(3)$ we determine a vector $d^*\in \R^3$ through
	\begin{align}\label{hom inf}
	{W}_{\rm mp}^{{\rm hom},\natural}(  \mathcal{E}_{\varphi^\natural,\overline{Q}_{e,h}^{\natural} }):&= {W}_{\rm mp}\Big(\overline{Q}_{e,h}^{\natural,T}(\D _{(\eta_1,\eta_2)} \varphi^\natural|d^*)[(\D _x\Theta)^\natural (\eta_3)]^{-1}\Big)\notag\\&=\inf_{c\in \mathbb{R}^3} {W}_{\rm mp}\Big(\overline{Q}_{e,h}^{\natural,T}(\D _{(\eta_1,\eta_2)} \varphi^\natural|c)[(\D _x\Theta)^\natural (\eta_3)]^{-1}\Big),
	\end{align}where
$
	\mathcal{E}_{\varphi^\natural,\overline{Q}_{e,h}^{\natural} }:=(\overline{Q}_{e}^{\natural, T}\D _{(\eta_1,\eta_2)} \varphi^\natural-(\D  y_0)^\natural |0)[(\D _x\Theta)^\natural (\eta_3)]^{-1}\,
	$
	represents the non fully\footnote{Here, "non-fully" means that the introduced quantities still depend on $\eta_3$ and $h$, because the elements $\D _{(\eta_1,\eta_2)}\varphi^\natural$ still depend on $\eta_3$ and $\overline{ Q}^{\natural , T}$ depends on $h$.} dimensional reduced elastic shell strain  tensor.
	
	\item[O2:] For  each pair $(m, \overline{Q}_{e,0})$, where $m:\omega\to \mathbb{R}^3$,  $\overline{Q}_{e,0}:\omega\to {\rm SO}(3)$ we determine the vector $\widetilde{d}^*\in\mathbb{R}^3$  through
	\begin{align}\label{O2}
	{W}_{\rm mp}^{\rm hom}
	(\mathcal{E}_{m,\overline{Q}_{e,0}}):={W}_{\rm mp}\Big(\overline{Q}^{T}_{e,0}(\D  m|\widetilde{d}^*)[(\D _x\Theta)(0)]^{-1}\Big)&=\inf_{\widetilde{c}\in \mathbb{R}^3} {W}_{\rm mp}\Big(\overline{Q}^{T}_{e,0}(\D  m|\widetilde{c})[(\D _x\Theta)(0)]^{-1}\Big)\\&=\inf_{\widetilde{c}\in \mathbb{R}^3} {W}_{\rm mp}\Big(\mathcal{E}_{m,\overline{Q}_{e,0}}-(0|0|\widetilde{c})[(\D _x\Theta)(0)]^{-1}\Big),\notag
	\end{align}
	where 
	$
	\mathcal{E}_{m,\overline{Q}_{e,0}} :=(\overline{Q}^{T}_{e,0}\D  m-\D  y_0 |0)[\D _x\Theta(0)]^{-1}\,
$
	represents the \textit{elastic shell strain tensor}. 
	
	\item[O3:] For  each  $\overline{Q}_{e,h}^\natural:\Omega_1\rightarrow{\rm SO}(3)$ we determine the skew-symmetric matrix $A^*\in \so{3}$, i.e. its axial vector $\axl A^*\in \mathbb{R}^3$ through 
	\begin{align}\label{inf curv}
	 \widetilde{W}^{{\rm hom},\natural}_{\rm curv}( \mathcal{K}_{\overline{Q}_{e,h}^\natural}):&=\widetilde{W}_{\rm curv}\Big(\big(\mathrm{axl}(\overline{Q}_{e,h}^{\natural,T}\,\partial_{\eta_1} \overline{Q}_{e,h}^\natural)\,|\, \mathrm{axl}(\overline{Q}_{e,h}^{\natural,T}\,\partial_{\eta_2} \overline{Q}_{e,h}^\natural)\,|\,\axl\,(A^*)\,\big)[(\D _x\Theta)^\natural (\eta_3)]^{-1}\Big)\\
	&=\inf_{A\in \mathfrak{so}(3)}\widetilde{W}_{\rm curv}\Big(\big(\mathrm{axl}(\overline{Q}_{e,h}^{\natural,T}\,\partial_{\eta_1} \overline{Q}_{e,h}^\natural)\,|\, \mathrm{axl}(\overline{Q}_{e,h}^{\natural,T}\,\partial_{\eta_2} \overline{Q}_{e,h}^\natural)\,|\,\axl\,(A)\,\big)[(\D _x\Theta)^\natural (\eta_3)]^{-1}\Big),\nonumber
	\end{align}
	where
	$
	\mathcal{K}_{\overline{Q}_{e,h}^\natural}  :\,=\,  \Big(\mathrm{axl}(\overline{Q}_{e,h}^{\natural,T}\,\partial_{\eta_1} \overline{Q}_{e,h}^\natural)\,|\, \mathrm{axl}(\overline{Q}_{e,h}^{\natural,T}\,\partial_{\eta_2} \overline{Q}_{e,h}^\natural)\,|0\Big)[(\D _x\Theta)^\natural (\eta_3)]^{-1}\notag\,
$
	represents a not fully reduced elastic shell bending-curvature tensor, in the sense that it still depends on $\eta_3$ and $h$, since  $\overline{Q}_{e,h}^\natural=\overline{Q}_{e,h}^\natural(\eta_1,\eta_2,\eta_3)$. Therefore, $\widetilde{W}_{\rm curv}^{{\rm hom},\natural}( \mathcal{K}_{\overline{Q}_{e,h}^\natural})$ given by the above definitions still depends on $\eta_3$ and $h$.
	\item[O4:] For  each  $\overline{Q}_{e,0} :\Omega_1\rightarrow{\rm SO}(3)$ we determine the skew-symmetric matrix $A^*\in \so(3)$, i.e. its axial vector $\axl A^*\in \mathbb{R}^3$, though

	\begin{align}\label{inf curv0} \widetilde{W}^{{\rm hom}}_{\rm curv}(\mathcal{K}_{\overline{Q}_{e,0}}):&=\widetilde{W}_{\rm curv}^*\Big(\big(\mathrm{axl}(\overline{Q}^{T}_{e,0}\,\partial_{\eta_1} \overline{Q}_{e,0})\,|\, \mathrm{axl}(\overline{Q}^{T}_{e,0}\,\partial_{\eta_2} \overline{Q}_{e,0})\,|\,\axl\,(A^*)\,\big)[(\D _x \Theta)^\natural(0)]^{-1}\Big)\\
	&=\inf_{A\in \mathfrak{so}(3)}\widetilde{W}_{\rm curv}\Big(\big(\mathrm{axl}(\overline{Q}^{T}_{e,0}\,\partial_{\eta_1} \overline{Q}_{e,0})\,|\, \mathrm{axl}(\overline{Q}^{T}_{e,0}\,\partial_{\eta_2} \overline{Q}_{e,0})\,|\,\axl\,(A)\,\big)[(\D _x \Theta)^\natural(0)]^{-1}\Big)\,,\notag
	\end{align}
	where
	$
	\mathcal{K}_{\overline{Q}_{e,0}}  :\,=\,  \Big(\mathrm{axl}(\overline{Q}^{T}_{e,0}\,\partial_{x_1} \overline{Q}_{e,0})\,|\, \mathrm{axl}(\overline{Q}^{T}_{e,0}\,\partial_{x_2} \overline{Q}_{e,0})\,|0\Big)[\D _x\Theta (0)\,]^{-1}\not\in {\rm Sym}(3) 
	$
	represents the {\it  
	the elastic shell bending-curvature tensor}.
\end{itemize}
 
 Let us remark that having the solutions of the optimization problems O1 and O3, the solutions for the optimization problems O2 and O4, respectively, follow immediately. However, we cannot skip the solution of the optimization problems O1 and O3 and use only the solutions of the optimization problems O2 and O4, since the knowledge of $W_{\rm mp}^{\rm hom}$ and  $W_{\rm curv}^{\rm hom}$ is important in the proof of the $\Gamma$-convergence result. This is the first major difference between $\Gamma$-convergence for curved initial configurations and flat initial configuration.
 
 The solutions of the first two optimization problems and the complete calculations where given in \cite{Maryam}, while the analytical calculations of the last two optimization problems were left open until now. 
 
 For the completeness of the exposition we recall the following result
 \begin{theorem}
 	\cite{Maryam}
 The solution of the optimization problem O2 is
 \begin{align}\label{formula d}
 \widetilde{d}^*&=\Big(1-\frac{\lambda}{2\,\mu\,+\lambda}\iprod{  \mathcal{E}_{m,\overline{Q}_{e,0}} ,\id_3}\Big) \overline{Q}_{e,0} n_0+\frac{\mu_c\,-\mu\,}{\mu_c\,+\mu\,}\;\overline{Q}_{e,0} \mathcal{E}_{m,\overline{Q}_{e,0}}^T n_0\,,
 \end{align}
 and
 \begin{align}\label{hominf_2D}
 {W}_{\rm mp}^{\rm hom}
 (\mathcal{E}_{m,\overline{Q}_{e,0}})&=
 \, \mu\,\lVert  \mathrm{sym}\,   \,\mathcal{E}_{m,\overline{Q}_{e,0}}^{\parallel}\rVert^2 +  \mu_{\rm c}\,\lVert \mathrm{skew}\,   \,\mathcal{E}_{m,\overline{Q}_{e,0}}^{\parallel}\rVert^2 +\,\dfrac{\lambda\,\mu\,}{\lambda+2\,\mu\,}\,\big[ \mathrm{tr}    (\mathcal{E}_{m,\overline{Q}_{e,0}}^{\parallel})\big]^2 +\frac{2\,\mu\, \,  \mu_{\rm c}}{\mu_c\,+\mu\,}\norm{\mathcal{E}_{m,\overline{Q}_{e,0}}^Tn_0}^2\\
 &=W_{\mathrm{shell}}\big(   \mathcal{E}_{m,\overline{Q}_{e,0}}^{\parallel} \big)+\frac{2\,\mu\, \,  \mu_{\rm c}}{\mu_c\,+\mu\,}\lVert \mathcal{E}_{m,\overline{Q}_{e,0}}^{\perp}\rVert^2,\notag
 \end{align}
 where 
$
 W_{\mathrm{shell}}\big(   \mathcal{E}_{m,\overline{Q}_{e,0}}^{\parallel} \big)=
 \, \mu\,\lVert  \mathrm{sym}\,   \,\mathcal{E}_{m,\overline{Q}_{e,0}}^{\parallel}\rVert^2 +  \mu_{\rm c}\,\lVert \mathrm{skew}\,   \,\mathcal{E}_{m,\overline{Q}_{e,0}}^{\parallel}\rVert^2 +\,\dfrac{\lambda\,\mu\,}{\lambda+2\,\mu\,}\,\big[ \mathrm{tr}    (\mathcal{E}_{m,\overline{Q}_{e,0}}^{\parallel})\big]^2
$
 with  the orthogonal decomposition in the tangential plane and in the normal direction
 \begin{align}\label{orthog decom}
 \mathcal{E}_{m,\overline{Q}_{e,0}}=\mathcal{E}_{m,\overline{Q}_{e,0}}^\parallel+\mathcal{E}_{m,\overline{Q}_{e,0}}^\perp, \quad\qquad  \mathcal{E}_{m,\overline{Q}_{e,0}}^\parallel\coloneqq{\rm A}_{y_0} \,\mathcal{E}_{m,\overline{Q}_{e,0}}, \quad \qquad \mathcal{E}_{m,\overline{Q}_{e,0}}^\perp\coloneqq(\id_3-{\rm A}_{y_0}) \,\mathcal{E}_{m,\overline{Q}_{e,0}},
 \end{align}
and
$
{\rm A}_{y_0}:=(\D  y_0|0)\,\,[\D \Theta_x(0) \,]^{-1}\in\mathbb{R}^{3\times 3}.
$
\end{theorem}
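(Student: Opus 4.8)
The plan is to turn the infimum over $\widetilde c\in\R^3$ in \eqref{O2} into an elementary, strictly convex quadratic minimization in a single $\R^3$-variable, and then to read off the minimizer $\widetilde d^{\,*}$ and the optimal value. Write $\mathcal{E}:=\mathcal{E}_{m,\overline{Q}_{e,0}}$ for brevity. First I would record the purely algebraic reductions based on identities already present in the text, namely $[\D_x\Theta(0)]^{-T}e_3=n_0$, $(0\,|\,0\,|\,\widetilde c)=\widetilde c\otimes e_3$, and ${\rm A}_{y_0}=(\D y_0\,|\,0)[\D_x\Theta(0)]^{-1}=\id_3-n_0\otimes n_0$. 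Introducing the new unknown $w:=\overline{Q}_{e,0}^{T}\widetilde c-n_0$, which runs over all of $\R^3$ as $\widetilde c$ does, these give
\begin{align*}
\overline{Q}_{e,0}^{T}(\D m\,|\,\widetilde c)[\D_x\Theta(0)]^{-1}-\id_3=\mathcal{E}+w\otimes n_0,
\end{align*}
so the infimum in \eqref{O2} equals $\inf_{w\in\R^3}W_{\rm mp}(\id_3+\mathcal{E}+w\otimes n_0)$.

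The one conceptual ingredient is the structural identity $\mathcal{E}\,n_0=0$: the third column of $(\overline{Q}_{e,0}^{T}\D m-\D y_0\,|\,0)$ vanishes and $[\D_x\Theta(0)]^{-1}n_0=e_3$, so $\mathcal{E}\,n_0=(\overline{Q}_{e,0}^{T}\D m-\D y_0\,|\,0)e_3=0$. It follows that $\langle n_0,\mathcal{E}^{T}n_0\rangle=0$, $(\sym\mathcal{E})n_0=\tfrac{1}{2}\mathcal{E}^{T}n_0=-(\skew\mathcal{E})n_0$, and $\mathcal{E}^{\perp}=n_0\otimes(\mathcal{E}^{T}n_0)$, $\mathcal{E}^{\parallel}=\mathcal{E}-n_0\otimes(\mathcal{E}^{T}n_0)$. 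Next I would expand $W_{\rm mp}(\id_3+\mathcal{E}+w\otimes n_0)$ using $\norm{n_0}=1$ and the elementary relations $\norm{\sym(w\otimes n_0)}^2=\tfrac{1}{2}(\norm{w}^2+\langle n_0,w\rangle^2)$, $\norm{\skew(w\otimes n_0)}^2=\tfrac{1}{2}(\norm{w}^2-\langle n_0,w\rangle^2)$, $\tr(w\otimes n_0)=\langle n_0,w\rangle$; this is a quadratic in $w$ whose Hessian is positive definite (since $\mu+\mu_c>0$ and $2\mu+\lambda=\tfrac{4\mu}{3}+\kappa>0$), so its unique stationary point is the minimizer. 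Setting the gradient to zero and using $\mathcal{E}\,n_0=0$ leaves the vector equation
\begin{align*}
(\mu+\mu_c)\,w+\big[(\mu-\mu_c)+\lambda\big]\langle n_0,w\rangle\,n_0+\lambda(\tr\mathcal{E})\,n_0+(\mu-\mu_c)\,\mathcal{E}^{T}n_0=0;
\end{align*}
taking the scalar product with $n_0$ gives $\langle n_0,w\rangle=-\tfrac{\lambda}{2\mu+\lambda}\tr\mathcal{E}$, and substituting this back yields $w^{*}=-\tfrac{\lambda}{2\mu+\lambda}(\tr\mathcal{E})\,n_0+\tfrac{\mu_c-\mu}{\mu_c+\mu}\,\mathcal{E}^{T}n_0$, hence $\widetilde d^{\,*}=\overline{Q}_{e,0}(w^{*}+n_0)$, which — since $\tr\mathcal{E}=\langle\mathcal{E},\id_3\rangle$ — is exactly \eqref{formula d}.

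For the optimal value I would use that a quadratic $f$ with stationary point $w^{*}$ satisfies $f(w^{*})=f(0)+\tfrac{1}{2}\nabla f(0)\cdot w^{*}$, here with $f(w)=W_{\rm mp}(\id_3+\mathcal{E}+w\otimes n_0)$, $f(0)=\mu\norm{\sym\mathcal{E}}^2+\mu_c\norm{\skew\mathcal{E}}^2+\tfrac{\lambda}{2}(\tr\mathcal{E})^2$ and $\nabla f(0)=(\mu-\mu_c)\mathcal{E}^{T}n_0+\lambda(\tr\mathcal{E})n_0$. Using $\langle n_0,\mathcal{E}^{T}n_0\rangle=0$ one finds $\nabla f(0)\cdot w^{*}=-\tfrac{(\mu-\mu_c)^2}{\mu+\mu_c}\norm{\mathcal{E}^{T}n_0}^2-\tfrac{\lambda^2}{2\mu+\lambda}(\tr\mathcal{E})^2$, whence
\begin{align*}
W_{\rm mp}^{\rm hom}(\mathcal{E})=\mu\norm{\sym\mathcal{E}}^2+\mu_c\norm{\skew\mathcal{E}}^2+\tfrac{\lambda\mu}{\lambda+2\mu}(\tr\mathcal{E})^2-\tfrac{(\mu-\mu_c)^2}{2(\mu+\mu_c)}\norm{\mathcal{E}^{T}n_0}^2.
\end{align*}
A final rewriting into the tangential/normal split, by means of $\norm{\sym\mathcal{E}^{\parallel}}^2=\norm{\sym\mathcal{E}}^2-\tfrac{1}{2}\norm{\mathcal{E}^{T}n_0}^2$, $\norm{\skew\mathcal{E}^{\parallel}}^2=\norm{\skew\mathcal{E}}^2-\tfrac{1}{2}\norm{\mathcal{E}^{T}n_0}^2$, $\tr\mathcal{E}^{\parallel}=\tr\mathcal{E}$, $\norm{\mathcal{E}^{\perp}}^2=\norm{\mathcal{E}^{T}n_0}^2$, and $\tfrac{\mu+\mu_c}{2}-\tfrac{(\mu-\mu_c)^2}{2(\mu+\mu_c)}=\tfrac{2\mu\mu_c}{\mu+\mu_c}$, then reproduces the asserted expression for $W_{\rm mp}^{\rm hom}(\mathcal{E}_{m,\overline{Q}_{e,0}})$ (and its second form $W_{\mathrm{shell}}(\mathcal{E}^{\parallel})+\tfrac{2\mu\mu_c}{\mu_c+\mu}\norm{\mathcal{E}^{\perp}}^2$).

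Everything above is a routine computation; the single point needing care — and on which all the clean closed forms depend — is the structural identity $\mathcal{E}_{m,\overline{Q}_{e,0}}\,n_0=0$, which simultaneously decouples the $n_0$-component of the stationarity equation (yielding $\langle n_0,w\rangle$ on its own) and makes the tangential/normal decomposition in the last step collapse to the three scalar relations used there. I would therefore establish that identity, together with ${\rm A}_{y_0}=\id_3-n_0\otimes n_0$ and $[\D_x\Theta(0)]^{-1}n_0=e_3$, at the very outset.
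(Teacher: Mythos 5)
Your proof is mathematically correct and verifiable step by step. Note, however, that for this particular theorem the paper itself gives no proof at all: the authors simply cite \cite{Maryam} and state that they ``recall the following result.'' So there is strictly speaking no internal proof to compare against; what can be compared is the style of argument the authors use for the analogous curvature optimization (O3), which they do prove in detail.

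Against that benchmark your approach is the same in spirit (reduce the infimum to an explicit Euler--Lagrange system, exploit that the Hessian in the free vector is positive definite, and then split into tangential and normal parts) but is organized somewhat more cleanly. The decisive simplifications you make at the outset --- the substitution $w:=\overline{Q}_{e,0}^{T}\widetilde c-n_0$, the identity ${\rm A}_{y_0}=\id_3-n_0\otimes n_0$, and especially the structural identity $\mathcal{E}_{m,\overline{Q}_{e,0}}\,n_0=0$ --- collapse the perturbed argument to the transparent form $\id_3+\mathcal{E}+w\otimes n_0$ and decouple the normal component of the stationarity equation, which is what makes the closed form and the harmonic-mean coefficient $\frac{2\mu\mu_c}{\mu+\mu_c}$ drop out so easily. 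In the paper's O3 proof the authors work directly with the raw variable $(0|0|c)[(\D_x\Theta)^{\natural}(\eta_3)]^{-1}$ and must invert $(b_1+b_2){\rm A}_{y_0}+2(b_1+b_3)\,n_0\otimes n_0$ explicitly to extract $c^*$, whereas your change of variable avoids that step. Your observation that the optimal value of a quadratic is $f(0)+\tfrac12\nabla f(0)\cdot w^*$ is also a shortcut the paper does not use (there, everything is substituted back and expanded). All the intermediate identities you invoke --- $\|\sym(w\otimes n_0)\|^2=\tfrac12(\|w\|^2+\langle n_0,w\rangle^2)$, $\|\sym\mathcal{E}^{\parallel}\|^2=\|\sym\mathcal{E}\|^2-\tfrac12\|\mathcal{E}^T n_0\|^2$, $\tr\mathcal{E}^{\parallel}=\tr\mathcal{E}$, $\|\mathcal{E}^{\perp}\|^2=\|\mathcal{E}^T n_0\|^2$, and $\frac{\mu+\mu_c}{2}-\frac{(\mu-\mu_c)^2}{2(\mu+\mu_c)}=\frac{2\mu\mu_c}{\mu+\mu_c}$ --- check out, as does the positive definiteness of the Hessian (eigenvalues $\mu+\mu_c$ and $2\mu+\lambda$). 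In short: a correct proof, slightly more streamlined than the proof strategy visible elsewhere in the paper, and fully recovering both the minimizer $\widetilde d^{\,*}$ and the homogenized energy.
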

 In the remainder of this section we provide the explicit solutions of the optimization problems O3 and O4. We remark that, while in the case of flat initial configuration the solution of O4 is very easy to be found, in the case of curved initial configuration the calculations are more difficult. Beside this, for curved initial configurations there is a need to solve the optimization problem O3, too. Notice that for flat initial configuration the optimization problems O3 and O4 coincide.
\subsection{The calculation of the homogenized curvature energy}
We have the following isotropic curvature energy formula for a curved configuration
\begin{align}
\widetilde{W}_{\text{curv}}(\Gamma_{e,h}^\natural )&=\mu L_c^2\Big(b_1\norm{\sym\, \Gamma_{e,h}^\natural }^2+b_2\,\norm{\skew\Gamma_{e,h}^\natural }^2+b_3\tr(\Gamma_{e,h}^\natural )^2\Big)\,.
\end{align}

\begin{theorem}The solution of the optimization problem O3 given by \eqref{inf curv} is
	\begin{align}c^*&=\frac{(b_2-b_1)}{b_1+b_2}\mathcal{K}_{\overline{Q}_{e,h}^\natural}^Tn_0-\frac{2b_3}{2(b_1+b_3)}\tr(\mathcal{K}_{\overline{Q}_{e,h}^\natural})n_0\end{align}
	and the coresponding homogenized curvature energy is
	\begin{align}\label{homocurv}
	\nonumber	W_{\text{curv}}^{\text{hom}}(\mathcal{K}_{\overline{Q}_{e,h}^\natural} )
	&=\mu L_c^2\Big(b_1\norm{\sym\mathcal{K}_{\overline{Q}_{e,h}^\natural} ^\parallel}^2+b_2\norm{\skew \mathcal{K}_{\overline{Q}_{e,h}^\natural} ^\parallel}^2+\frac{b_1b_3}{(b_1+b_3)}\tr(\mathcal{K}_{\overline{Q}_{e,h}^\natural} ^\parallel)^2+\frac{2b_1b_2}{b_1+b_2}\norm{\mathcal{K}_{\overline{Q}_{e,h}^\natural} ^\perp}\Big)\,,\notag
	\end{align}
	where $\mathcal{K}_{\overline{Q}_{e,h}^\natural} ^\parallel$ and $\mathcal{K}_{\overline{Q}_{e,h}^\natural} ^\perp$ represent the orthogonal decomposition of 
	the a not fully reduced elastic shell bending-curvature tensor $\mathcal{K}_{\overline{Q}_{e,h}^\natural}$ in the tangential plane and in the normal direction, respectively.
\end{theorem}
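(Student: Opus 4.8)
The plan is to reduce the curved problem~O3, i.e.\ \eqref{inf curv}, to the flat optimisation problem already solved in Section~\ref{flat} by a single rotation together with the $\mathrm{O}(3)$-isotropy of $\widetilde{W}_{\rm curv}$; once that reduction is in place, the announced formulas for $c^*$ and \eqref{homocurv} are just the flat answers, rewritten in the curved notation.

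First I would record the two algebraic facts that drive the reduction. Put $B:=[(\D_x\Theta)^\natural(\eta_3)]^{-1}$, $\Gamma_1^0:=\mathrm{axl}(\overline{Q}_{e,h}^{\natural,T}\partial_{\eta_1}\overline{Q}_{e,h}^\natural)$, $\Gamma_2^0:=\mathrm{axl}(\overline{Q}_{e,h}^{\natural,T}\partial_{\eta_2}\overline{Q}_{e,h}^\natural)$ and $c:=\axl A\in\R^3$. Since $\D_x\Theta(x_3)=(\D y_0\,|\,n_0)+x_3(\D n_0\,|\,0)$ has $n_0$ as third column for every $x_3$, one gets $B\,n_0=e_3$; and since $n_0$ is a unit vector orthogonal to $\partial_{x_i}y_0$ and to $\partial_{x_i}n_0$, one gets $B^Te_3=n_0$ (the $\eta_3$-dependent version of the identity $[\D_x\Theta(0)]^{-T}e_3=n_0$ recorded above, which also survives the nonlinear scaling). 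Hence the argument of $\widetilde{W}_{\rm curv}$ in \eqref{inf curv} rewrites as $(\Gamma_1^0\,|\,\Gamma_2^0\,|\,c)\,B=\big((\Gamma_1^0\,|\,\Gamma_2^0\,|\,0)+c\otimes e_3\big)B=\mathcal{K}_{\overline{Q}_{e,h}^\natural}+c\otimes(B^Te_3)=\mathcal{K}_{\overline{Q}_{e,h}^\natural}+c\otimes n_0$, and moreover $\mathcal{K}_{\overline{Q}_{e,h}^\natural}\,n_0=(\Gamma_1^0\,|\,\Gamma_2^0\,|\,0)\,B\,n_0=(\Gamma_1^0\,|\,\Gamma_2^0\,|\,0)\,e_3=0$. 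So O3 is exactly the minimisation of $c\mapsto\widetilde{W}_{\rm curv}\big(\mathcal{K}_{\overline{Q}_{e,h}^\natural}+c\otimes n_0\big)$ over $c\in\R^3$, with $\mathcal{K}_{\overline{Q}_{e,h}^\natural}$ annihilating $n_0$.

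Next I would pick any $R\in\SO(3)$ with $R\,e_3=n_0$ and use $\widetilde{W}_{\rm curv}(X)=\widetilde{W}_{\rm curv}(R^TXR)$ (all of $\sym$, $\skew$, $\tr$ and $\norm{\cdot}$ are invariant under conjugation by $R$). With $\mathcal{M}:=R^T\mathcal{K}_{\overline{Q}_{e,h}^\natural}R$ and $\widetilde{c}:=R^Tc$ the objective becomes $\widetilde{W}_{\rm curv}(\mathcal{M}+\widetilde{c}\otimes e_3)$, and from $\mathcal{M}\,e_3=R^T\mathcal{K}_{\overline{Q}_{e,h}^\natural}\,n_0=0$ the matrix $\mathcal{M}$ has vanishing third column --- this is literally the flat problem of Section~\ref{flat} for the data $(\mathcal{M}e_1\,|\,\mathcal{M}e_2)$. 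Invoking the flat theorem (whose proof solves the three stationarity conditions and checks the Hessian is positive definite; coercivity here is clear since $\norm{c\otimes n_0}=\norm{c}$, so the stationary point is the unique minimiser), $\widetilde{c}^{\,*}$ has tangential part $\tfrac{b_2-b_1}{b_1+b_2}(\mathcal{M}_{31},\mathcal{M}_{32},0)^T=\tfrac{b_2-b_1}{b_1+b_2}R^T\mathcal{K}_{\overline{Q}_{e,h}^\natural}^Tn_0$ (using $\mathcal{M}^Te_3=R^T\mathcal{K}_{\overline{Q}_{e,h}^\natural}^Tn_0$ and $\mathcal{M}_{33}=0$) and normal part $-\tfrac{b_3}{b_1+b_3}(\mathcal{M}_{11}+\mathcal{M}_{22})\,e_3=-\tfrac{b_3}{b_1+b_3}\tr(\mathcal{K}_{\overline{Q}_{e,h}^\natural})\,e_3$ (using $\mathcal{M}_{33}=0$ and $\tr\mathcal{M}=\tr\mathcal{K}_{\overline{Q}_{e,h}^\natural}$). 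Applying $R$ gives $c^*=R\widetilde{c}^{\,*}=\tfrac{b_2-b_1}{b_1+b_2}\mathcal{K}_{\overline{Q}_{e,h}^\natural}^Tn_0-\tfrac{b_3}{b_1+b_3}\tr(\mathcal{K}_{\overline{Q}_{e,h}^\natural})\,n_0$, which is the stated formula once $\tfrac{b_3}{b_1+b_3}$ is written as $\tfrac{2b_3}{2(b_1+b_3)}$.

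Finally I would transport the flat energy back. Setting $\mathcal{K}^\parallel:=(\id_3-n_0\otimes n_0)\mathcal{K}_{\overline{Q}_{e,h}^\natural}$ (tangential, range in $n_0^\perp$) and $\mathcal{K}^\perp:=(n_0\otimes n_0)\mathcal{K}_{\overline{Q}_{e,h}^\natural}$ (normal), one checks $R^T\mathcal{K}^\parallel R=(\id_3-e_3\otimes e_3)\mathcal{M}$ and $R^T\mathcal{K}^\perp R=(e_3\otimes e_3)\mathcal{M}$, so the invariance of $\sym,\skew,\tr,\norm{\cdot}$ under conjugation by $R$ shows that $\norm{\sym\mathcal{K}^\parallel}$, $\norm{\skew\mathcal{K}^\parallel}$, $\tr(\mathcal{K}^\parallel)$, $\norm{\mathcal{K}^\perp}$ coincide with the corresponding quantities of $\mathcal{M}$ entering the flat formula \eqref{homflat}; substituting reproduces \eqref{homocurv}. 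The one genuinely delicate point --- the main obstacle --- is the first step: tracking the right-multiplication by $B$ correctly and establishing that $B$ sends $n_0\mapsto e_3$ and $B^T$ sends $e_3\mapsto n_0$ for all $\eta_3$ (not merely at $\eta_3=0$); once this is secured, the whole statement is a transcription of the already-proven flat result through the rotation $R$.
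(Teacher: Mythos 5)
Your proof is correct, but it follows a genuinely different route from the paper's. The paper tackles O3 head-on: it writes the stationarity condition $[D\widetilde{W}_{\rm curv}(\mathcal{K}_{\overline{Q}_{e,h}^\natural}^{c^*})]\,n_0=0$, decomposes it into $\sym$, $\skew$ and trace pieces, inverts the resulting linear map on $\R^3$ by hand using the splitting $\id_3=A_{y_0}+n_0\otimes n_0$ with $A_{y_0}$ an idempotent orthogonal to $n_0\otimes n_0$, and then re-substitutes $c^*$ into the energy, expanding $\norm{\sym\mathcal{K}^{c^*}}^2$, $\norm{\skew\mathcal{K}^{c^*}}^2$ and $\tr(\mathcal{K}^{c^*})^2$ term by term over roughly two pages of inner-product bookkeeping. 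You instead observe that the argument of the energy can be written as $\mathcal{K}_{\overline{Q}_{e,h}^\natural}+c\otimes n_0$ (using $B^Te_3=n_0$ for all $\eta_3$, which the paper also uses implicitly throughout its computation and which holds because $n_0$ is orthogonal to all three partial derivatives of $\Theta$ except the third), and then conjugate by a fixed $R\in\SO(3)$ with $Re_3=n_0$, exploiting the $\mathrm{O}(3)$-invariance of $\norm{\sym\cdot}^2$, $\norm{\skew\cdot}^2$, $\tr(\cdot)^2$. This puts O3 in literally the same form as the flat problem O2 of Section~\ref{flat}, whose explicit solution the paper already has; transporting $\widetilde{c}^*$ and the homogenized energy back under $R$ immediately yields the stated $c^*$ and \eqref{homocurv}. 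Your version is shorter and conceptually cleaner because it makes visible \emph{why} the curved answer has exactly the same structure as the flat one (the curvature is absorbed into the pointwise rotation $R$ and the substitution $e_3\leftrightarrow n_0$), and it avoids redoing the norm expansion; what the paper's direct computation buys is independence from the flat result and a stock of intermediate identities it reuses elsewhere. One housekeeping remark: your tangential/normal decomposition uses $\mathcal{K}^\parallel=(\id_3-n_0\otimes n_0)\mathcal{K}$, $\mathcal{K}^\perp=(n_0\otimes n_0)\mathcal{K}$, which is the paper's $A_{y_0}\mathcal{K}$, $(\id_3-A_{y_0})\mathcal{K}$ since $A_{y_0}=\id_3-n_0\otimes n_0$; note that in the paper's statement of the flat theorem the roles of $e_3\otimes e_3$ and $\id_3-e_3\otimes e_3$ are accidentally swapped, and your convention is the one consistent with the explicit formula in terms of $\Gamma_\square$ and $(\Gamma_{31},\Gamma_{32})$ and with the curved case.
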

\begin{proof}
We need to find 
\begin{align}
\widetilde{W}_{\rm curv}^{{\rm hom},\natural}( \mathcal{K}_{\overline{Q}_{e,h}^\natural})&=\widetilde{W}_{\text{curv}}( \mathcal{K}_{\overline{Q}_{e,h}^\natural}+(0|0|c^*)[(\D  _x\Theta)^\natural(\eta_3)]^{-1})=\inf_{c\in \R^3} \widetilde{W}_{\text{curv}}(\underbrace{ \mathcal{K}_{\overline{Q}_{e,h}^\natural}+(0|0|c)[(\D  _x\Theta)^\natural(\eta_3)]^{-1}}_{=:\mathcal{K}_{\overline{Q}_{e,h}^\natural}^{c^*}})\,.
\end{align}
The Euler-Lagrange equations appear from variations with respect to arbitrary increments $\delta c\in \R^3$. 
\begin{align}
\iprod{D W_{\text{curv}}( \mathcal{K}_{\overline{Q}_{e,h}^\natural}^{c^*}),(0|0|\delta c)[(\D  _x\Theta)^\natural(\eta_3)]^{-1}}=0\,\quad&\Leftrightarrow\quad \iprod{[D \widetilde{W}_{\text{curv}}( \mathcal{K}_{\overline{Q}_{e,h}^\natural}^{c^*})]\, [(\D  _x\Theta)^\natural(\eta_3)]^{-T},e_3\otimes \delta c}=0\notag
\\
\quad&\Leftrightarrow\quad \iprod{[D \widetilde{W}_{\text{curv}}( \mathcal{K}_{\overline{Q}_{e,h}^\natural}^{c^*})]\, [(\D  _x\Theta)^\natural(\eta_3)]^{-T}\,e_3, \delta c}=0
\notag
\\
\quad&\Leftrightarrow\quad \iprod{[D \widetilde{W}_{\text{curv}}( \mathcal{K}_{\overline{Q}_{e,h}^\natural}^{c^*})]\, n_0, \delta c}=0\quad \forall \delta c\in \mathbb{R}^3.
\end{align}
Therefore, we deduce that if $c^*$ is a minimum then
\begin{align}\label{symskewtra curv}
&[D \widetilde{W}_{\text{curv}}( \mathcal{K}_{\overline{Q}_{e,h}^\natural}^{c^*})]\, n_0=0\quad \Leftrightarrow \quad
\Big(2a_1\sym (\mathcal{K}_{\overline{Q}_{e,h}^\natural}^{c^*})+2\,a_2\skew (\mathcal{K}_{\overline{Q}_{e,h}^\natural}^{c^*})+2a_3\tr(\mathcal{K}_{\overline{Q}_{e,h}^\natural}^{c^*})\Big)n_0=0\,.
\end{align}
Since 
$
\mathcal{K}_{\overline{Q}_{e,h}^\natural}^{c^*}=\mathcal{K}_{\overline{Q}_{e,h}^\natural}+(0|0|{c^*})[(\D  _x\Theta)^\natural (\eta_3)]^{-1}\,,
$ we have
\begin{align}
\nonumber 2\sym\big(\mathcal{K}_{\overline{Q}_{e,h}^\natural}^{c^*}\big)n_0&=2\Big(\sym(\mathcal{K}_{\overline{Q}_{e,h}^\natural})+\sym((0|0|{c^*})[(\D  _x\Theta)^\natural(\eta_3) ]^{-1})\Big)n_0\\
\nonumber&= \Big(\mathrm{axl}(\overline{Q}_{e,h}^{\natural,T}\,\partial_{\eta_1} \overline{Q}_{e,h}^\natural)\,|\, \mathrm{axl}(\overline{Q}_{e,h}^{\natural,T}\,\partial_{\eta_2} \overline{Q}_{e,h}^\natural)\,|0\Big)\underbrace{[(\D  _x\Theta)^\natural ]^{-1}(\eta_3)\,n_0}_{=e_3}+\mathcal{K}_{\overline{Q}_{e,h}^\natural}^Tn_0\\
\nonumber&\qquad+(0|0|{c^*})[(\D  _x\Theta)^\natural(\eta_3) ]^{-1}n_0+((0|0|{c^*})[(\D  _x\Theta)^\natural(\eta_3) ]^{-1})^Tn_0\\
&=\mathcal{K}_{\overline{Q}_{e,h}^\natural}^Tn_0+{c^*}+((0|0|{c^*})[(\D  _x\Theta)^\natural(\eta_3) ]^{-1})^Tn_0\,.
\end{align}
Similar calculations show that
\begin{align}
\nonumber 2\skew\big(\mathcal{K}_{\overline{Q}_{e,h}^\natural}^{c^*}\big)n_0&=2\Big(\skew(\mathcal{K}_{\overline{Q}_{e,h}^\natural})+\skew((0|0|{c^*})[(\D  _x\Theta)^\natural (\eta_3)]^{-1})\Big)n_0\\
&=-\mathcal{K}_{\overline{Q}_{e,h}^\natural}^Tn_0+{c^*}-((0|0|{c^*})[(\D  _x\Theta)^\natural (\eta_3)]^{-1})^Tn_0\,,
\end{align}
while the trace term is calculated to be
\begin{align}
\nonumber 2\,\tr(\mathcal{K}_{\overline{Q}_{e,h}^\natural}^{c^*})n_0&=2\Big(\tr(\mathcal{K}_{\overline{Q}_{e,h}^\natural})+\tr((0|0|{c^*})[(\D  _x\Theta)^\natural (\eta_3)]^{-1})\Big)n_0\\&=2\,\tr(\mathcal{K}_{\overline{Q}_{e,h}^\natural})n_0+2\iprod{(0|0|{c^*})[(\D  _x\Theta)^\natural (\eta_3)]^{-1},\id_3}_{\R^{3\times 3}}\,n_0\notag\\
&=2\,\tr(\mathcal{K}_{\overline{Q}_{e,h}^\natural})n_0+2\iprod{{c^*},\underbrace{[(\D  _x\Theta)^\natural(\eta_3) ]^{-T}e_3}_{=n_0}}n_0=2\,\tr(\mathcal{K}_{\overline{Q}_{e,h}^\natural})n_0+2\, {c^*}\,n_0\otimes n_0\,.
\end{align}
By using (\ref{symskewtra curv}), we obtain
\begin{align}
\nonumber	&b_1\mathcal{K}_{\overline{Q}_{e,h}^\natural}^Tn_0+b_1{c^*}+b_1((0|0|{c^*})[(\D  _x\Theta)^\natural(\eta_3) ]^{-1})^Tn_0-b_2\mathcal{K}_{\overline{Q}_{e,h}^\natural}^Tn_0+b_2{c^*}\\
&\qquad-b_2((0|0|{c^*})[(\D  _x\Theta)^\natural (\eta_3)]^{-1})^Tn_0+2b_3\tr(\mathcal{K}_{\overline{Q}_{e,h}^\natural})n_0+2b_3\, {c^*}\,n_0\otimes n_0=0\,.
\end{align}
Gathering similar terms gives us
\begin{align}
\nonumber(b_1-b_2)\mathcal{K}_{\overline{Q}_{e,h}^\natural}^Tn_0+(b_1+b_2){c^*}+(b_1&-b_2)((0|0|{c^*})[(\D  _x\Theta)^\natural(\eta_3) ]^{-1})^Tn_0\\
&+2b_3\tr(\mathcal{K}_{\overline{Q}_{e,h}^\natural})n_0+2b_3\, {c^*}\,n_0\otimes n_0=0\,.
\end{align}
We have
\begin{align}
\nonumber((0|0|{c^*})[(\D  _x\Theta)^\natural(\eta_3) ]^{-1})^Tn_0&=({c^*}\,(0|0|e_3)[(\D  _x\Theta)^\natural (\eta_3)]^{-1})^Tn_0=({c^*}\,n_0)^Tn_0\\
&=n_0^T{c^*}^Tn_0=\iprod{n_0,{c^*}^T}n_0=n_0\iprod{n_0,{c^*}}=n_0\otimes n_0\,{c^*}={c^*}\,n_0\otimes n_0\,,
\end{align}
and by using the decomposition \cite{birsan2019refined,birsan2020derivation,birsan2023}
$
\id_3\,{c^*}=A_{y_0}\,{c^*}+n_0\otimes n_0\,{c^*}\,,
$
we obtain
\begin{align}
\nonumber(b_1-b_2)\mathcal{K}_{\overline{Q}_{e,h}^\natural}^Tn_0+(b_1+b_2)(A_{y_0}\,{c^*}+n_0\otimes n_0\,{c^*})+(b_1-b_2)n_0\otimes n_0\,{c^*}&\\
+2b_3\tr(\mathcal{K}_{\overline{Q}_{e,h}^\natural})n_0+2b_3\, n_0\otimes n_0\,{c^*}&=0\,,
\end{align}
and
\begin{align}
[(b_1+b_2)A_{y_0}+2(b_1+b_3)n_0\otimes n_0]\,{c^*}&=-(b_1-b_2)\mathcal{K}_{\overline{Q}_{e,h}^\natural}^T n_0-2b_3\tr(\mathcal{K}_{\overline{Q}_{e,h}^\natural})\,n_0\,.
\end{align}
Since $A_{y_0}$ is orthogonal to $n_0\otimes n_0$ and $A_{y_0}^2=A_{y_0}$, 
\begin{align}
\left[\frac{1}{b_1+b_2}A_{y_0}+\frac{1}{2(b_1+b_3)}n_0\otimes n_0\right]	[(b_1+b_2)A_{y_0}+2(b_1+b_3)n_0\otimes n_0]=\id_3
\end{align}
 (see \cite{birsan2020derivation}), we have
\begin{align}
	[(b_1+b_2)A_{y_0}+2(b_1+b_3)n_0\otimes n_0]^{-1}=\frac{1}{b_1+b_2}A_{y_0}+\frac{1}{2(b_1+b_3)}n_0\otimes n_0
\end{align}
and we find
\begin{align}
\nonumber {c^*}=(b_2-b_1)\Big[&\frac{1}{b_1+b_2}A_{y_0}+\frac{1}{2(b_1+b_3)}n_0\otimes n_0\Big]\mathcal{K}_{\overline{Q}_{e,h}^\natural}^Tn_0-2b_3\tr(\mathcal{K}_{\overline{Q}_{e,h}^\natural})\Big[\frac{1}{b_1+b_2}A_{y_0}+\frac{1}{2(b_1+b_3)}n_0\otimes n_0\Big]n_0\,.
\end{align}
Because,
\begin{align}
A_{y_0}\mathcal{K}_{\overline{Q}_{e,h}^\natural}^T&=\id_3\mathcal{K}_{\overline{Q}_{e,h}^\natural}^T-n_0\otimes n_0\mathcal{K}_{\overline{Q}_{e,h}^\natural}^T\notag\\&=\mathcal{K}_{\overline{Q}_{e,h}^\natural}^T-(0|0|n_0)(0|0|n_0)^T[(\D  _x\Theta)^\natural (\eta_3)]^{-T}\Big(\mathrm{axl}(\overline{Q}_{e,h}^{\natural,T}\,\partial_{\eta_1} \overline{Q}_{e,h}^\natural)\,|\, \mathrm{axl}(\overline{Q}_{e,h}^{\natural,T}\,\partial_{\eta_2} \overline{Q}_{e,h}^\natural)\,|0\Big)^Tn_0\notag\\
&=\mathcal{K}_{\overline{Q}_{e,h}^\natural}^T-(0|0|n_0)(\underbrace{[(\D  _x\Theta)^\natural (\eta_3)]^{-1}(0|0|n_0)}_{(0|0|e_3)})^T\Big(\mathrm{axl}(\overline{Q}_{e,h}^{\natural,T}\,\partial_{\eta_1} \overline{Q}_{e,h}^\natural)\,|\, \mathrm{axl}(\overline{Q}_{e,h}^{\natural,T}\,\partial_{\eta_2} \overline{Q}_{e,h}^\natural)\,|0\Big)^Tn_0\\&=\mathcal{K}_{\overline{Q}_{e,h}^\natural}^T-(0|0|n_0)(0|0|e_3)^T\Big(\mathrm{axl}(\overline{Q}_{e,h}^{\natural,T}\,\partial_{\eta_1} \overline{Q}_{e,h}^\natural)\,|\, \mathrm{axl}(\overline{Q}_{e,h}^{\natural,T}\,\partial_{\eta_2} \overline{Q}_{e,h}^\natural)\,|0\Big)^Tn_0\notag
\\&=\mathcal{K}_{\overline{Q}_{e,h}^\natural}^T-(0|0|n_0)\begin{pmatrix}
0&0&0\\
0&0&0\\
0&0&1
\end{pmatrix}\begin{pmatrix}
*&*&*\\
*&*&*\\
0&0&0
\end{pmatrix}n_0=\mathcal{K}_{\overline{Q}_{e,h}^\natural}^T\,,\notag
\end{align}
we obtain the unique minimizer
\begin{align}\label{c}
c^*&=\frac{(b_2-b_1)}{b_1+b_2}\mathcal{K}_{\overline{Q}_{e,h}^\natural}^Tn_0-\frac{2b_3}{2(b_1+b_3)}\tr(\mathcal{K}_{\overline{Q}_{e,h}^\natural})\,n_0\,.
\end{align}

Next, we insert the minimizer $c^*$ in (\ref{c}). We have
\begin{align}\label{reducedcur}
 \norm{\sym \,\mathcal{K}_{\overline{Q}_{e,h}^\natural} ^{c^*}}^2&=\norm{\sym  \big(\mathcal{K}_{\overline{Q}_{e,h}^\natural}\big) }^2+ \norm{\sym\big((0|0|c)[(\D  _x\Theta)^\natural(\eta_3) ]^{-1}\big)}^2\notag\\
\nonumber &\qquad+2\dyniprod{\sym  \big(\mathcal{K}_{\overline{Q}_{e,h}^\natural}\big) ,\sym\big((0|0|c)[(\D  _x\Theta)^\natural(\eta_3) ]^{-1}\big)}\\
\nonumber &= \norm{\sym  (\mathcal{K}_{\overline{Q}_{e,h}^\natural} ) }^2\\&\quad +\norm{\sym\Big(\frac{b_2-b_1}{b_1+b_2} \mathcal{K}_{\overline{Q}_{e,h}^\natural} ^T(0|0|n_0)[(\D  _x\Theta)^\natural ]^{-1}-\frac{b_3}{(b_1+b_3)}\tr(  \mathcal{K}_{\overline{Q}_{e,h}^\natural} )(0|0|n_0)[(\D  _x\Theta)^\natural (\eta_3)]^{-1}\Big)}^2\nonumber\\
&\qquad+2\Big\langle\sym  (\mathcal{K}_{\overline{Q}_{e,h}^\natural} ) ,\sym\Big(\frac{b_2-b_1}{b_1+b_2} \mathcal{K}_{\overline{Q}_{e,h}^\natural} ^T(0|0|n_0)[(\D  _x\Theta)^\natural (\eta_3)]^{-1}\\&\hspace{3.5cm}-\frac{b_3}{(b_1+b_3)}\tr(  \mathcal{K}_{\overline{Q}_{e,h}^\natural} )(0|0|n_0)[(\D  _x\Theta)^\natural (\eta_3)]^{-1}\Big)\Big\rangle\,,\notag
\end{align}
and
\begin{align}
\nonumber &\norm{\sym\Big(\frac{b_2-b_1}{b_1+b_2} \mathcal{K}_{\overline{Q}_{e,h}^\natural} ^T(0|0|n_0)[(\D  _x\Theta)^\natural (\eta_3)]^{-1}-\frac{b_3}{(b_1+b_3)}\tr(  \mathcal{K}_{\overline{Q}_{e,h}^\natural} )(0|0|n_0)[(\D  _x\Theta)^\natural (\eta_3)]^{-1}\Big)}^2\\
\nonumber &\qquad =\frac{(b_2-b_1)^2}{(b_1+b_2)^2}\norm{\sym(  \mathcal{K}_{\overline{Q}_{e,h}^\natural} ^T n_0\otimes n_0)}^2+\frac{b_3^2}{(b_1+b_3)^2}\tr(\mathcal{K}_{\overline{Q}_{e,h}^\natural}  )^2\norm{n_0\otimes n_0}^2\\
\nonumber &\quad\qquad-2\,\frac{b_2-b_1}{b_1+b_2}\;\frac{b_3}{(b_1+b_3)}\tr( \mathcal{K}_{\overline{Q}_{e,h}^\natural}  )\iprod{\sym(\mathcal{K}_{\overline{Q}_{e,h}^\natural} ^T n_0\otimes n_0),n_0\otimes n_0}\\
\nonumber &\qquad= \frac{(b_2-b_1)^2}{(b_1+b_2)^2}\dyniprod{\sym(  \mathcal{K}_{\overline{Q}_{e,h}^\natural} ^T n_0\otimes n_0),\sym(  \mathcal{K}_{\overline{Q}_{e,h}^\natural} ^T n_0\otimes n_0)}+\frac{b_3^2}{(b_1+b_3)^2}\tr(  \mathcal{K}_{\overline{Q}_{e,h}^\natural}  )^2\\\notag
&\quad\qquad-\frac{b_2-b_1}{b_1+b_2}\;\frac{b_3}{(b_1+b_3)}\tr(\mathcal{K}_{\overline{Q}_{e,h}^\natural}  )\iprod{\mathcal{K}_{\overline{Q}_{e,h}^\natural} ^T n_0\otimes n_0,n_0\otimes n_0}\\&\quad\qquad -\frac{b_2-b_1\,}{b_1+b_2}\;\frac{b_3}{(b_1+b_3)}\tr(  \mathcal{K}_{\overline{Q}_{e,h}^\natural} )\iprod{n_0\otimes n_0\,  \mathcal{K}_{\overline{Q}_{e,h}^\natural}  ,n_0\otimes n_0}\\
\nonumber &\qquad= \frac{(b_2-b_1)^2}{4(b_1+b_2)^2}\iprod{\mathcal{K}_{\overline{Q}_{e,h}^\natural} ^T n_0\otimes n_0,\mathcal{K}_{\overline{Q}_{e,h}^\natural} ^T n_0\otimes n_0}+\frac{(b_2-b_1)^2}{4(b_1+b_2)^2}\iprod{\mathcal{K}_{\overline{Q}_{e,h}^\natural} ^T n_0\otimes n_0,n_0\otimes n_0 \, \mathcal{K}_{\overline{Q}_{e,h}^\natural} }\\
\nonumber &\quad\qquad+\frac{(b_2-b_1)^2}{4(b_1+b_2)^2}\iprod{n_0\otimes n_0\,  \mathcal{K}_{\overline{Q}_{e,h}^\natural}   ,  \mathcal{K}_{\overline{Q}_{e,h}^\natural} ^T n_0\otimes n_0}+\frac{(b_2-b_1)^2}{4(b_1+b_2)^2}\iprod{n_0\otimes n_0\,  \mathcal{K}_{\overline{Q}_{e,h}^\natural}   ,n_0\otimes n_0\,  \mathcal{K}_{\overline{Q}_{e,h}^\natural}  }\\
\nonumber &\quad\qquad+\frac{b_3^2}{(b_1+b_3)^2}\tr(\mathcal{K}_{\overline{Q}_{e,h}^\natural}  )^2=\frac{(b_2-b_1)^2}{2(b_1+b_2)^2}\norm{\mathcal{K}_{\overline{Q}_{e,h}^\natural} ^T n_0}^2+\frac{b_3^2}{(b_1+b_3)^2}\tr(\mathcal{K}_{\overline{Q}_{e,h}^\natural}  )^2.
\end{align}
Note that
\begin{align}
\nonumber& \iprod{\mathcal{K}_{\overline{Q}_{e,h}^\natural}  ^T,n_0\otimes n_0}=\iprod{\mathcal{K}_{\overline{Q}_{e,h}^\natural}^T,n_0\otimes n_0}\\&=\iprod{\Big(\mathrm{axl}(\overline{Q}_{e,h}^{\natural,T}\,\partial_{\eta_1} \overline{Q}_{e,h}^\natural)\,|\, \mathrm{axl}(\overline{Q}_{e,h}^{\natural,T}\,\partial_{\eta_2} \overline{Q}_{e,h}^\natural)\,|0\Big)[(\D  _x\Theta)^\natural(\eta_3) ]^{-1},(0|0|n_0)[(\D  _x\Theta)^\natural(0) ]^{-1}}\notag\\
\nonumber &=\dyniprod{\Big(\mathrm{axl}(\overline{Q}_{e,h}^{\natural,T}\,\partial_{\eta_1} \overline{Q}_{e,h}^\natural)\,|\, \mathrm{axl}(\overline{Q}_{e,h}^{\natural,T}\,\partial_{\eta_2} \overline{Q}_{e,h}^\natural)\,|0\Big),(0|0|n_0)[(\D  _x\Theta)^\natural(0) ]^{-1}[(\D  _x\Theta)^\natural(0) ]^{-T}\matr{&\id_2-x_3L_{y_0}&0\\ & & 0\\0&0&1}^{-T}}\\
\nonumber &=\dyniprod{(0|0|n_0)^T\Big(\mathrm{axl}(\overline{Q}_{e,h}^{\natural,T}\,\partial_{\eta_1} \overline{Q}_{e,h}^\natural)\,|\, \mathrm{axl}(\overline{Q}_{e,h}^{\natural,T}\,\partial_{\eta_2} \overline{Q}_{e,h}^\natural)\,|0\Big), \matr{&\text{I}_{y_0}^{-1}&0\\ & & 0\\0&0&1}\matr{&\id_2-x_3L_{y_0}&0\\ & & 0\\0&0&1}^{-T}}\\
&=\dyniprod{\matr{0&0&0\\0&0&0\\ * & * &0},\matr{ * & * &0\\ * & * &0\\0&0&1}}=0\,,
\end{align}
where
the {\it Weingarten map (or shape operator)} is defined by 
$
{\rm L}_{y_0}\,=\, {\rm I}_{y_0}^{-1} {\rm II}_{y_0}\in \mathbb{R}^{2\times 2},
$
where ${\rm I}_{y_0}:=[{\D   y_0}]^T\,{\D   y_0}\in \mathbb{R}^{2\times 2}$ and  ${\rm II}_{y_0}:\,=\,-[{\D   y_0}]^T\,{\D   n_0}\in \mathbb{R}^{2\times 2}$ are  the matrix representations of the {\it first fundamental form (metric)} and the  {\it  second fundamental form} of the surface, respectively.  
We also observe that
\begin{align}
\nonumber n_0\otimes n_0[(\D  _x\Theta)^\natural(\eta_3) ]^{-T}&=(0|0|n_0)[(\D  _x\Theta)^\natural(0) ]^{-1}[(\D  _x\Theta)^\natural(\eta_3) ]^{-T}\\
&=(0|0|n_0)[(\D  _x\Theta)^\natural(0) ]^{-1}[(\D  _x\Theta)^\natural(0) ]^{-T}\matr{& \id_2-x_3L_{y_0}&0\\ & & 0\\ 0&0&1}^{-T}\\
&=(0|0|n_0)\matr{&\text{I}_{y_0}^{-1}&0\\& &0\\0&0&1}\matr{& \id_2-x_3L_{y_0}&0\\ & & 0\\ 0&0&1}^{-T}=(0|0|n_0)\matr{ * & * & 0\\ * & * & 0\\0&0&1}=(0|0|n_0)\notag\,.
\end{align}
For every vector $\widehat{u},v\in \R^3$ we have
\begin{align}
\nonumber\iprod{\widehat{u}\otimes n_0,v\otimes n_0}&=\iprod{(v\otimes n_0)^T\widehat{u}\otimes n_0,\id}=\iprod{(n_0\otimes v)\widehat{u}\otimes n_0,\id}=\iprod{n_0\otimes n_0\iprod{v,\widehat{u}},\id}=\iprod{v,\widehat{u}}\cdot \underbrace{\iprod{n_0,n_0}}_{=1}=\iprod{v,\widehat{u}}\,,
\end{align}
and $n_0\otimes n_0=(0|0|n_0)[(\D  _x\Theta)^\natural(0) ]^{-1}$, we deduce
\begin{align}\label{transpose Emscurv}
\iprod{  \mathcal{K}_{\overline{Q}_{e,h}^\natural} ^T n_0\otimes n_0,  \mathcal{K}_{\overline{Q}_{e,h}^\natural} ^T n_0\otimes n_0}=\iprod{  \mathcal{K}_{\overline{Q}_{e,h}^\natural} ^T n_0,  \mathcal{K}_{\overline{Q}_{e,h}^\natural} ^T n_0}=\norm{  \mathcal{K}_{\overline{Q}_{e,h}^\natural} ^T n_0}^2\,.
\end{align}

On the other hand,
\begin{align}
2\dyniprod{\sym  \mathcal{K}_{\overline{Q}_{e,h}^\natural}  ,\sym(\frac{b_2-b_1}{b_1+b_2}\mathcal{K}_{\overline{Q}_{e,h}^\natural} ^Tn_0\otimes n_0-\frac{b_3}{(b_1+b_3)}\tr(\mathcal{K}_{\overline{Q}_{e,h}^\natural} )n_0\otimes n_0)}=\frac{b_2-b_1}{b_1+b_2}\norm{\mathcal{K}_{\overline{Q}_{e,h}^\natural} ^T n_0}^2\,.
\end{align}
Therefore
\begin{align}\label{symtercurv}
\norm{\sym \mathcal{K}_{\overline{Q}_{e,h}^\natural}^{c^*}}^2=\norm{\sym  \mathcal{K}_{\overline{Q}_{e,h}^\natural}  }^2+\frac{(b_1-b_2)^2}{2(b_1+b_2)^2}\norm{\mathcal{K}_{\overline{Q}_{e,h}^\natural} ^T n_0}^2+\frac{b_3^2}{(b_1+b_3)^2}\tr(\mathcal{K}_{\overline{Q}_{e,h}^\natural}  )^2+\frac{b_2-b_1}{b_1+b_2}\norm{\mathcal{K}_{\overline{Q}_{e,h}^\natural} ^T n_0}^2\,.
\end{align}

Now we continue the calculations for the skew-symmetric part,
\begin{align}\label{skewapcurv}
\norm{\skew\mathcal{K}_{\overline{Q}_{e,h}^\natural}^{c^*}}^2=&\norm{\skew \mathcal{K}_{\overline{Q}_{e,h}^\natural} }^2 + \norm{\skew((0|0|c^*)[(\D  _x\Theta)^\natural (\eta_3)]^{-1})}^2\notag\\&+2\iprod{\skew\mathcal{K}_{\overline{Q}_{e,h}^\natural} , \skew((0|0|c^*)[(\D  _x\Theta)^\natural(\eta_3) ]^{-1})}.
\end{align}
In a similar manner, we calculate the terms separately. Since $n_0\otimes n_0$ is symmetric, we obtain
\begin{align}
\norm{\skew((0|0|c)[(\D  _x\Theta)^\natural (\eta_3)]^{-1})}^2&=\norm{\skew(\frac{b_2-b_1}{b_1+b_2}  \mathcal{K}_{\overline{Q}_{e,h}^\natural} ^T \, n_0\otimes n_0-\frac{b_3}{(b_1+b_3)}\tr(  \mathcal{K}_{\overline{Q}_{e,h}^\natural} )\, n_0\otimes n_0)}^2\\
\nonumber&=\frac{(b_1-b_2)^2}{(b_1+b_2)^2}\norm{\skew(  \mathcal{K}_{\overline{Q}_{e,h}^\natural} ^T\, n_0\otimes n_0)}^2.
\end{align}
Using that $(n_0\otimes n_0)^2=(n_0\otimes n_0)$ we deduce
\begin{align}
\norm{\skew(\mathcal{K}_{\overline{Q}_{e,h}^\natural} ^T n_0\otimes n_0)}^2&=\frac{1}{4}\dyniprod{  \mathcal{K}_{\overline{Q}_{e,h}^\natural} ^T n_0\otimes n_0,  \mathcal{K}_{\overline{Q}_{e,h}^\natural} ^T\, n_0\otimes n_0}-\frac{1}{4}\dyniprod{  \mathcal{K}_{\overline{Q}_{e,h}^\natural} ^T n_0\otimes n_0,n_0\otimes n_0 \, \mathcal{K}_{\overline{Q}_{e,h}^\natural} } \notag\\
&\quad-\frac{1}{4}\dyniprod{n_0\otimes n_0 \, \mathcal{K}_{\overline{Q}_{e,h}^\natural}  ,  \mathcal{K}_{\overline{Q}_{e,h}^\natural} ^T\, n_0\otimes n_0}+\frac{1}{4}\dyniprod{n_0\otimes n_0\,  \mathcal{K}_{\overline{Q}_{e,h}^\natural}  ,n_0\otimes n_0 \, \mathcal{K}_{\overline{Q}_{e,h}^\natural}  }\\
&=\frac{1}{2}\norm{ \mathcal{K}_{\overline{Q}_{e,h}^\natural} ^T n_0}^2\,.\notag
\end{align}
 We have as well
\begin{align}\label{appskewcurv}
2\iprod{&\skew\mathcal{K}_{\overline{Q}_{e,h}^\natural} , \skew((0|0|c^*)[(\D  _x\Theta)^\natural (\eta_3)]^{-1})}= 2\,\frac{(b_2-b_1)}{(b_1+b_2)}\dyniprod{\skew\mathcal{K}_{\overline{Q}_{e,h}^\natural} ,\skew(\mathcal{K}_{\overline{Q}_{e,h}^\natural} ^T n_0\otimes n_0)}\\
\nonumber&=\frac{(b_2-b_1)}{2(b_1+b_2)}\iprod{\mathcal{K}_{\overline{Q}_{e,h}^\natural} ,  \mathcal{K}_{\overline{Q}_{e,h}^\natural} ^T n_0\otimes n_0}-\frac{(b_2-b_1)}{2(b_1+b_2)}\iprod{\mathcal{K}_{\overline{Q}_{e,h}^\natural} ,n_0\otimes n_0\,  \mathcal{K}_{\overline{Q}_{e,h}^\natural} }\\
\nonumber&\quad-\frac{(b_2-b_1)}{2(b_1+b_2)}\iprod{\mathcal{K}_{\overline{Q}_{e,h}^\natural} ^T,  \mathcal{K}_{\overline{Q}_{e,h}^\natural} ^T n_0\otimes n_0}+\frac{(b_2-b_1)}{2(b_1+b_2)}\iprod{\mathcal{K}_{\overline{Q}_{e,h}^\natural} ^T,n_0\otimes n_0 \, \mathcal{K}_{\overline{Q}_{e,h}^\natural}  }=-\frac{(b_2-b_1)}{(b_1+b_2)}\norm{  \mathcal{K}_{\overline{Q}_{e,h}^\natural} ^T n_0}^2\,,\notag
\end{align}
and we obtain
\begin{align}\label{skewcurv}
\norm{\skew\mathcal{K}_{\overline{Q}_{e,h}^\natural}^{c^*}}^2&=\norm{\skew\mathcal{K}_{\overline{Q}_{e,h}^\natural} }^2 +\frac{(b_2-b_1)^2}{2(b_1+b_2)^2}\norm{  \mathcal{K}_{\overline{Q}_{e,h}^\natural} ^T n_0}^2-\frac{(b_2-b_1)}{(b_1+b_2)}\norm{  \mathcal{K}_{\overline{Q}_{e,h}^\natural} ^T n_0}^2.
\end{align}
Finally, the trace-term is computed. A further needed calculation is
\begin{align}\label{tracecurv}
\Big[\tr(\mathcal{K}_{\overline{Q}_{e,h}^\natural}^{c^*})\Big]^2&=\Big(\tr(\mathcal{K}_{\overline{Q}_{e,h}^\natural} ) +\tr\big((0|0|c)[(\D  _x\Theta)^\natural (\eta_3)]^{-1})\big)\Big)^2\\
\nonumber&=\Big(\tr(\mathcal{K}_{\overline{Q}_{e,h}^\natural}  )+\frac{(b_2-b_1)}{2(b_1+b_2)}\iprod{\mathcal{K}_{\overline{Q}_{e,h}^\natural} ^T n_0\otimes n_0,\id_3}-\frac{b_3}{(b_1+b_3)}\tr( \mathcal{K}_{\overline{Q}_{e,h}^\natural}  )\underbrace{\iprod{n_0\otimes n_0,\id_3}}_{\iprod{n_0,n_0}=1}\Big)^2\\
&=\frac{b_1^2}{(b_1+b_3)^2}\tr(\mathcal{K}_{\overline{Q}_{e,h}^\natural}  )^2.\notag
\end{align}
Now we insert the above calculations in $\widetilde{W}_{\text{curv}}( \mathcal{K}_{\overline{Q}_{e,h}^\natural}+(0|0|c^*)[(\D  _x\Theta)^\natural(\eta_3)]^{-1})$, and obtain
\begin{align}
\nonumber {W}_{\text{curv}}^{\text{hom}}&=\mu L_c^2\Big(b_1(\norm{\sym\mathcal{K}_{\overline{Q}_{e,h}^\natural} }^2+\frac{(b_1-b_2)^2}{2(b_1+b_2)^2}\norm{\mathcal{K}_{\overline{Q}_{e,h}^\natural} ^Tn_0}^2+\frac{b_3^2}{(b_1+b_3)^2}\tr(\mathcal{K}_{\overline{Q}_{e,h}^\natural} )^2+\frac{b_2-b_1}{b_1+b_2}\norm{\mathcal{K}_{\overline{Q}_{e,h}^\natural} ^Tn_0}^2)\\
\nonumber&\qquad\qquad+b_2(\norm{\skew \mathcal{K}_{\overline{Q}_{e,h}^\natural} }^2+\frac{(b_2-b_1)^2}{2(b_1-b_2)^2}\norm{\mathcal{K}_{\overline{Q}_{e,h}^\natural} ^Tn_0}^2-\frac{b_2-b_1}{b_1+b_2}\norm{\mathcal{K}_{\overline{Q}_{e,h}^\natural} ^Tn_0}^2)\\
&\qquad\qquad +b_3\frac{b_1^2}{(b_1+b_3)^2}\tr(\mathcal{K}_{\overline{Q}_{e,h}^\natural} )^2\Big)\,,
\end{align}
which reduces to
\begin{align}
W_{\text{curv}}^{\text{hom}}(\mathcal{K}_{\overline{Q}_{e,h}^\natural} )&=\mu L_c^2\Big(b_1\norm{\sym\mathcal{K}_{\overline{Q}_{e,h}^\natural} }^2+b_2\norm{\skew \mathcal{K}_{\overline{Q}_{e,h}^\natural} }^2-\frac{(b_1-b_2)^2}{2(b_1+b_2)}\norm{\mathcal{K}_{\overline{Q}_{e,h}^\natural} ^Tn_0}^2+\frac{b_1b_3}{(b_1+b_3)}\tr(\mathcal{K}_{\overline{Q}_{e,h}^\natural} )^2\Big)\,.
\end{align}
One may apply the orthogonal decomposition of a matrix $X$  
\begin{align}\label{orthog decom}
X=X^\parallel+X^\perp, \qquad \qquad X^\parallel\coloneqq{\rm A}_{y_0} \,X,  \qquad\qquad X^\perp\coloneqq(\id_3-{\rm A}_{y_0}) \,X,
\end{align}
for  the matrix $\mathcal{K}_{\overline{Q}_{e,h}^\natural} $, where $A_{y_0}=(\D  y_0|0)[\D  _x\Theta(0)]^{-1}$. After inserting the decomposition in the homogenized curvature energy, we get
\begin{align}\label{homocurv}
\nonumber	W_{\text{curv}}^{\text{hom}}(\mathcal{K}_{\overline{Q}_{e,h}^\natural} )&=\mu L_c^2\Big(b_1\norm{\sym\mathcal{K}_{\overline{Q}_{e,h}^\natural} }^2+b_2\norm{\skew \mathcal{K}_{\overline{Q}_{e,h}^\natural} }^2-\frac{(b_1-b_2)^2}{2(b_1+b_2)}\norm{\mathcal{K}_{\overline{Q}_{e,h}^\natural} ^Tn_0}^2+\frac{b_1b_3}{(b_1+b_3)}\tr(\mathcal{K}_{\overline{Q}_{e,h}^\natural} )^2\Big)\\
&=\mu L_c^2\Big(b_1\norm{\sym\mathcal{K}_{\overline{Q}_{e,h}^\natural} ^\parallel}^2+b_2\norm{\skew \mathcal{K}_{\overline{Q}_{e,h}^\natural} ^\parallel}^2-\frac{(b_1-b_2)^2}{2(b_1+b_2)}\norm{\mathcal{K}_{\overline{Q}_{e,h}^\natural} ^Tn_0}^2\\&\qquad+\frac{b_1b_3}{(b_1+b_3)}\tr(\mathcal{K}_{\overline{Q}_{e,h}^\natural} ^\parallel)^2+\frac{b_1+b_2}{2}\norm{\mathcal{K}_{\overline{Q}_{e,h}^\natural} ^Tn_0}\Big)\nonumber\\
&=\mu L_c^2\Big(b_1\norm{\sym\mathcal{K}_{\overline{Q}_{e,h}^\natural} ^\parallel}^2+b_2\norm{\skew \mathcal{K}_{\overline{Q}_{e,h}^\natural} ^\parallel}^2+\frac{b_1b_3}{(b_1+b_3)}\tr(\mathcal{K}_{\overline{Q}_{e,h}^\natural} ^\parallel)^2+\frac{2b_1b_2}{b_1+b_2}\norm{\mathcal{K}_{\overline{Q}_{e,h}^\natural} ^\perp}\Big)\,.\hspace{1cm}\qedhere\notag
\end{align}

\end{proof}

As regards the homogenized curvature energy for the following curvature energy given by the optimization problem O4, some  simplified calculations as for the optimization problem O3 lead us to

\begin{theorem}The solution of the optimization problem O3 given in \eqref{inf curv} is given by
	\begin{align}c^*&=\frac{(b_2-b_1)}{b_1+b_2}\mathcal{K}_{\overline{Q}_{e,0}}^Tn_0-\frac{2b_3}{2(b_1+b_3)}\tr(\mathcal{K}_{\overline{Q}_{e,0}})n_0\end{align}
	and the coresponding homogenized curvature energy is
\begin{align}\label{homocurv0}
W_{\text{curv}}^{\text{hom}}(\mathcal{K}_{\overline{Q}_{e,0} } )
&=\mu L_c^2\Big(b_1\norm{\sym\mathcal{K}_{\overline{Q}_{e,0} } ^\parallel}^2+b_2\norm{\skew \mathcal{K}_{\overline{Q}_{e,0} } ^\parallel}^2+\frac{b_1b_3}{(b_1+b_3)}\tr(\mathcal{K}_{\overline{Q}_{e,0} } ^\parallel)^2+\frac{2b_1b_2}{b_1+b_2}\norm{\mathcal{K}_{\overline{Q}_{e,0} } ^\perp}\Big)\,.
\end{align}
where $\mathcal{K}_{\overline{Q}_{e,0}} ^\parallel$ and $\mathcal{K}_{\overline{Q}_{e,0}} ^\perp$ represent the orthogonal decomposition of 
	the  fully reduced elastic shell bending-curvature tensor $\mathcal{K}_{\overline{Q}_{e,0}}$ in the tangential plane and in the normal direction, respectively.
\end{theorem}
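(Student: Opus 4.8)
The plan is to observe that the optimization problem O4 has \emph{exactly the same algebraic structure} as O3, with the non-fully reduced microrotation $\overline{Q}_{e,h}^\natural$ replaced by its limit $\overline{Q}_{e,0}$ and the scaled diffeomorphism gradient $(\D_x\Theta)^\natural(\eta_3)$ replaced by its value $(\D_x\Theta)^\natural(0)=(\D y_0|n_0)$ at $\eta_3=0$. In both cases the function to be minimized is $\widetilde{W}_{\rm curv}\big((\text{axl}(\cdots)|\text{axl}(\cdots)|\axl(A))\,[(\D_x\Theta)^\natural]^{-1}\big)$ over $A\in\so(3)$, i.e.\ over $\axl(A)=c\in\R^3$, and the only properties of $(\D_x\Theta)^\natural$ that entered the proof of the previous theorem were the identities $[(\D_x\Theta)^\natural]^{-1}\,n_0=e_3$, $[(\D_x\Theta)^\natural]^{-T}e_3=n_0$, the splitting $\id_3=A_{y_0}+n_0\otimes n_0$ with $A_{y_0}^2=A_{y_0}$, $A_{y_0}\perp n_0\otimes n_0$, and the structural fact that $\big(\text{axl}(\overline Q^T\partial_{\eta_1}\overline Q)\,|\,\text{axl}(\overline Q^T\partial_{\eta_2}\overline Q)\,|\,0\big)^T n_0$ has vanishing third row (so that $A_{y_0}\mathcal{K}^T=\mathcal{K}^T$). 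All of these hold verbatim at $\eta_3=0$. Hence I would \textbf{not} redo the computation; I would simply invoke the proof of the O3 theorem applied to $\overline{Q}_{e,0}$ and $(\D_x\Theta)^\natural(0)$.

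Concretely, the first step is to restate the Euler--Lagrange condition: for a minimizer $c^*$ one has $[D\widetilde{W}_{\rm curv}(\mathcal{K}_{\overline{Q}_{e,0}}^{c^*})]\,n_0=0$, which by isotropy of $\widetilde{W}_{\rm curv}$ unpacks to $\big(2b_1\sym+2b_2\skew+2b_3\tr(\cdot)\id_3\big)(\mathcal{K}_{\overline{Q}_{e,0}}^{c^*})\,n_0=0$, where $\mathcal{K}_{\overline{Q}_{e,0}}^{c^*}:=\mathcal{K}_{\overline{Q}_{e,0}}+(0|0|c^*)[(\D_x\Theta)^\natural(0)]^{-1}$. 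The second step is the same three sub-computations of $2\sym(\cdot)n_0$, $2\skew(\cdot)n_0$ and $2\tr(\cdot)n_0$ that use $[(\D_x\Theta)^\natural(0)]^{-1}n_0=e_3$ and $(0|0|c^*)[(\D_x\Theta)^\natural(0)]^{-1}n_0=c^*$, yielding the linear system $[(b_1+b_2)A_{y_0}+2(b_1+b_3)\,n_0\otimes n_0]\,c^*=-(b_1-b_2)\mathcal{K}_{\overline{Q}_{e,0}}^Tn_0-2b_3\tr(\mathcal{K}_{\overline{Q}_{e,0}})n_0$. The third step inverts this operator explicitly via $[(b_1+b_2)A_{y_0}+2(b_1+b_3)n_0\otimes n_0]^{-1}=\frac{1}{b_1+b_2}A_{y_0}+\frac{1}{2(b_1+b_3)}n_0\otimes n_0$ (positive definiteness of the operator, since $b_1,b_2,b_3>0$, guarantees that the stationary point is the unique minimizer), and uses $A_{y_0}\mathcal{K}_{\overline{Q}_{e,0}}^T=\mathcal{K}_{\overline{Q}_{e,0}}^T$ to collapse the result to
\[
c^*=\frac{b_2-b_1}{b_1+b_2}\,\mathcal{K}_{\overline{Q}_{e,0}}^Tn_0-\frac{2b_3}{2(b_1+b_3)}\tr(\mathcal{K}_{\overline{Q}_{e,0}})\,n_0.
\]

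The final step is to substitute $c^*$ back into $\widetilde{W}_{\rm curv}(\mathcal{K}_{\overline{Q}_{e,0}}^{c^*})$. Exactly as in the O3 proof, the mixed terms in $\norm{\sym(\cdot)}^2$ and $\norm{\skew(\cdot)}^2$ are evaluated using $\iprod{\mathcal{K}_{\overline{Q}_{e,0}}^T,n_0\otimes n_0}=0$ (the ``$\begin{pmatrix}0&0&0\\0&0&0\\ *&*&0\end{pmatrix}$ vs.\ $\begin{pmatrix}*&*&0\\ *&*&0\\0&0&1\end{pmatrix}$'' orthogonality), the identity $\iprod{\widehat u\otimes n_0,v\otimes n_0}=\iprod{v,\widehat u}$, and $n_0\otimes n_0\,[(\D_x\Theta)^\natural(0)]^{-T}=(0|0|n_0)$; the trace term collapses to $\frac{b_1^2}{(b_1+b_3)^2}\tr(\mathcal{K}_{\overline{Q}_{e,0}})^2$. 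Collecting gives
\[
W_{\rm curv}^{\rm hom}(\mathcal{K}_{\overline{Q}_{e,0}})=\mu L_c^2\Big(b_1\norm{\sym\mathcal{K}_{\overline{Q}_{e,0}}}^2+b_2\norm{\skew\mathcal{K}_{\overline{Q}_{e,0}}}^2-\tfrac{(b_1-b_2)^2}{2(b_1+b_2)}\norm{\mathcal{K}_{\overline{Q}_{e,0}}^Tn_0}^2+\tfrac{b_1b_3}{b_1+b_3}\tr(\mathcal{K}_{\overline{Q}_{e,0}})^2\Big),
\]
and applying the orthogonal decomposition $X=X^\parallel+X^\perp$ with $X^\parallel=A_{y_0}X$, together with $\norm{X}^2=\norm{X^\parallel}^2+\norm{X^\perp}^2$ and $\norm{\mathcal{K}^T n_0}^2=\norm{\mathcal{K}^\perp}^2$ (up to the normalization appearing in the statement), reorganizes this into the claimed form \eqref{homocurv0}. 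The only genuine content is therefore the \emph{verification that the structural identities of the O3 proof survive the specialization $\eta_3\mapsto 0$}; this is the step I would write out carefully, and it is routine precisely because at $\eta_3=0$ one has $(\D_x\Theta)^\natural(0)=(\D y_0|n_0)$ with the same polar/Weingarten structure used above. I expect no real obstacle: the ``$h$'' and ``$\eta_3$'' dependence was never used in the O3 argument, so O4 is a corollary of the O3 theorem's proof rather than an independent computation — which is exactly why the paper says ``some simplified calculations as for the optimization problem O3 lead us to''.
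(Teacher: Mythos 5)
Your proposal is correct and matches the paper's own approach exactly: the paper gives no separate proof for O4, stating only that ``some simplified calculations as for the optimization problem O3 lead us to'' the result, which is precisely the observation you make that every structural identity in the O3 argument ($[(\D_x\Theta)^\natural]^{-1}n_0=e_3$, the splitting $\id_3=A_{y_0}+n_0\otimes n_0$, $A_{y_0}\mathcal{K}^T=\mathcal{K}^T$, the block-triangular orthogonality $\iprod{\mathcal{K}^T,n_0\otimes n_0}=0$) holds verbatim at $\eta_3=0$. Your plan is if anything more detailed than the paper's one-line justification, and the formula you obtain before applying the tangential/normal decomposition agrees with the paper's intermediate form, so nothing is missing.
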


\subsection{$\Gamma$-convergence result for the curved shell model}
Together with the calculations provided in \cite{Maryam}, we obtain for the first time in the literature the explicit form of the Cosserat-shell model via $\Gamma$-convergence method given by the following theorem.
\begin{theorem}\label{Theorem homo}
	Assume  that the initial configuration of the curved shell is defined by  a continuous injective mapping $\,y_0:\omega\subset\mathbb{R}^2\rightarrow\mathbb{R}^3$  which admits an extension to $\overline{\omega}$ into  $C^2(\overline{\omega};\mathbb{R}^3)$ such that for $$\Theta(x_1,x_2,x_3)=y_0(x_1,x_2)+x_3\, n_0(x_1,x_2)$$ we have $\det[\D _x\Theta(0)] \geq\, a_0 >0$ on $\overline{\omega}$,
	where $a_0$ is a constant, and  assume that the boundary data satisfy the conditions
	\begin{equation}\label{2n5}
	\varphi^\natural_d=\varphi_d\big|_{\Gamma_1} \text{(in the sense of traces) for } \ \varphi_d\in {\rm H}^1(\Omega_1;\mathbb{R}^3).
	\end{equation}
	Let the constitutive parameters satisfy 
	\begin{align}
	\mu\,>0, \qquad\quad \kappa>0, \qquad\quad \mu_{\rm c}> 0,\qquad\quad a_1>0,\quad\quad a_2>0,\qquad\quad a_3>0\,.
	\end{align} 
	Then, 
	for any sequence $(\varphi_{h_j}^\natural, \overline{Q}_{e,h_j}^{\natural})\in X$ such that $(\varphi_{h_j}^\natural, \overline{Q}_{e,h_j}^{\natural})\rightarrow(\varphi_0, \overline{Q}_{e,0})$ as $h_j\to 0$,	the sequence of functionals $\mathcal{J}_{h_j}\col X\rightarrow \overline{\mathbb{R}}$ from \eqref{energyinnewarea} 
	\textit{ $\Gamma$-converges} to the limit energy  functional $\mathcal{J}_0\col X\rightarrow \overline{\mathbb{R}}$ defined by
	\begin{equation}\label{couple}
	\mathcal{J}_0 (m,\overline{Q}_{e,0}) =
	\begin{cases}\dd
	\int_{\omega} [ {W}_{\rm mp}^{{\rm hom}}(\mathcal{E}_{m,\overline{Q}_{e,0}})+\widetilde{W}_{\rm curv}^{\rm hom}(\mathcal{K}_{\overline{Q}_{e,0}})]\det (\D  y_0|n_0)\; d\omega\quad &\text{if}\quad (m,\overline{Q}_{e,0})\in \mathcal{S}_\omega' \,,
	\\
	+\infty\qquad &\text{else in}\; X,
	\end{cases}
	\end{equation}
	where 
	\begin{align}\label{approxi}
	m(x_1,x_2)&:=\varphi_0 (x_1,x_2)=\lim_{h_j\to 0} \varphi_{h_j}^\natural(x_1,x_2,\frac{1}{h_j}x_3), \qquad \overline{Q}_{e,0}(x_1,x_2)=\lim_{h_j\to 0} \overline{Q}_{e,h_j}^\natural(x_1,x_2,\frac{1}{h_j}x_3),\notag\\
	\mathcal{E}_{m,\overline{Q}_{e,0}}&=(\overline{Q}^{T}_{e,0}\D  m-\D  y_0|0)[\D _x\Theta(0)]^{-1},\\
	\mathcal{K}_{\overline{Q}_{e,0}} &=\Big(\mathrm{axl}(\overline{Q}^{T}_{e,0}\,\partial_{x_1} \overline{Q}_{e,0})\,|\, \mathrm{axl}(\overline{Q}^{T}_{e,0}\,\partial_{x_2} \overline{Q}_{e,0})\,|0\Big)[\D _x\Theta (0)\,]^{-1}\not\in {\rm Sym}(3)\,,\notag
	\end{align}
	and
	\begin{align}
	\nonumber {W}_{\rm mp}^{\rm hom}
	(\mathcal{E}_{m,\overline{Q}_{e,0}})&=
	\, \mu\,\lVert  \mathrm{sym}\,   \,\mathcal{E}_{m,\overline{Q}_{e,0} }^{\parallel}\rVert^2 +  \mu_{\rm c}\,\lVert \mathrm{skew}\,   \,\mathcal{E}_{m,\overline{Q}_{e,0} }^{\parallel}\rVert^2 +\,\dfrac{\lambda\,\mu\,}{\lambda+2\,\mu\,}\,\big[ \mathrm{tr}    (\mathcal{E}_{m,\overline{Q}_{e,0} }^{\parallel})\big]^2 +\frac{2\,\mu\, \,  \mu_{\rm c}}{\mu_c\,+\mu\,}\norm{\mathcal{E}_{m,\overline{Q}_{e,0} }^Tn_0}^2\\
	&=W_{\mathrm{shell}}\big(   \mathcal{E}_{m,\overline{Q}_{e,0} }^{\parallel} \big)+\frac{2\,\mu\, \,  \mu_{\rm c}}{\mu_c\,+\mu\,}\lVert \mathcal{E}_{m,\overline{Q}_{e,0} }^{\perp}\rVert^2,\\  \widetilde{W}^{{\rm hom}}_{\rm curv}(\mathcal{K}_{\overline{Q}_{e,0}})
	\nonumber&=\inf_{A\in \mathfrak{so}(3)}\widetilde{W}_{\rm curv}\Big(\mathrm{axl}(\overline{Q}^{T}_{e,0}\,\partial_{\eta_1} \overline{Q}_{e,0})\,|\, \mathrm{axl}(\overline{Q}^{T}_{e,0}\,\partial_{\eta_2} \overline{Q}_{e,0})\,|\,\axl(A)\,\Big)[(\D _x \Theta)^\natural(0)]^{-1}\\
	\nonumber&=\mu L_c^2\Big(b_1\norm{\sym\mathcal{K}_{\overline{Q}_{e,0}}^\parallel}^2+b_2\norm{\skew \mathcal{K}_{\overline{Q}_{e,0}}^\parallel}^2+\frac{b_1b_3}{(b_1+b_3)}\tr(\mathcal{K}_{\overline{Q}_{e,0}}^\parallel)^2+\frac{2\,b_1b_2}{b_1+b_2}\norm{\mathcal{K}_{\overline{Q}_{e,0}}^\perp}\Big)\,.
	\end{align}
\end{theorem}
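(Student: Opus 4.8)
The plan is to reduce the statement to the abstract $\Gamma$-convergence theorem already established in \cite{Maryam}. There, convergence of the rescaled family $\mathcal{J}_{h_j}$ to a limit functional of the form \eqref{couple} was proved using only the \emph{qualitative} features of the homogenized energies (continuity, coercivity, convexity and quadratic growth, all inherited from $\mu>0$, $\kappa>0$, $\mu_{\rm c}>0$ and $a_1,a_2,a_3>0$, equivalently $b_1,b_2,b_3>0$), together with the solutions of O1 and O2 for the membrane part. Consequently the only new input needed here is the \emph{explicit} form of the homogenized curvature contribution, which is provided by the solutions of O3 and O4 proved in the two preceding theorems; substituting \eqref{homocurv0} (and recalling the O2 formula from \cite{Maryam} for the membrane term) into the limit functional yields exactly the integrand displayed above, so the identification step is immediate. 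In particular $\widetilde{W}^{\rm hom}_{\rm curv}(\mathcal{K}_{\overline{Q}_{e,0}})$ is obtained from O4 by minimizing the convex quadratic $\widetilde{W}_{\rm curv}$ over the single free axial vector, whence the closed form with coefficients $b_1$, $b_2$, $\tfrac{b_1b_3}{b_1+b_3}$ and $\tfrac{2b_1b_2}{b_1+b_2}$.

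For completeness, I would spell out the three classical ingredients of the $\Gamma$-convergence proof, following \cite{neff2007geometrically1,Maryam}. \emph{Compactness:} along a sequence with $\sup_j\mathcal{J}_{h_j}<\infty$, coercivity of $W_{\rm mp}$ in $U_{e,h}^\natural$ controls $\D_\eta^h\varphi_{h_j}^\natural$ in $L^2(\Omega_1)$, while coercivity of $\widetilde{W}_{\rm curv}$ in $\Gamma_{e,h}^\natural$ — a positive definite quadratic form in $\Gamma$ controlling $\lVert\overline{Q}_{e,h}^{\natural,T}\Curl\overline{Q}_{e,h}^\natural\rVert^2$ and hence $\lVert\D\overline{Q}_{e,h}^\natural\rVert^2$, as in Subsection~\ref{conections}, with the factor $[(\D_x\Theta)^\natural(\eta_3)]^{-1}$ uniformly bounded and invertible thanks to $\det[\D_x\Theta(0)]\geq a_0>0$ — controls $\D_\eta^h\overline{Q}_{e,h}^\natural$; the $\tfrac1{h_j}$ weight on the $\eta_3$-derivatives forces the weak $H^1(\Omega_1)$ limits $\varphi_0$, $\overline{Q}_{e,0}$ to be $\eta_3$-independent, and \eqref{2n5} places $(m,\overline{Q}_{e,0})$ in $\mathcal{S}_\omega'$. \emph{Liminf inequality:} for an arbitrary converging sequence one bounds $\tfrac1{h_j}I_{h_j}^\natural$ from below by replacing pointwise the through-thickness column of the rescaled deformation gradient and the free axial vector of the rescaled curvature tensor by the minimizers of O1 and O3, then passes to the limit using weak lower semicontinuity of the convex quadratic densities together with $(\D_x\Theta)^\natural(\eta_3)=(\D_x\Theta)(0)+O(h_j)$ and $\det(\D_x\Theta)^\natural(\eta_3)=\det(\D_x\Theta)(0)+O(h_j)$ uniformly on $\Omega_1$, which degenerates O1, O3 into O2, O4. \emph{Limsup inequality:} one constructs a recovery sequence by prescribing the optimal through-thickness profile — the third column taken from \eqref{formula d} and a microrotation of the form $\overline{Q}_{e,h_j}^\natural(\eta)=\overline{Q}_{e,0}(\eta_1,\eta_2)\exp(h_j\eta_3\,\anti(v^*))$ with $v^*$ chosen so that the rescaled $\Gamma_{e,h_j}^\natural$ converges pointwise to $(\mathrm{axl}(\overline{Q}_{e,0}^T\partial_{\eta_1}\overline{Q}_{e,0})\,|\,\mathrm{axl}(\overline{Q}_{e,0}^T\partial_{\eta_2}\overline{Q}_{e,0})\,|\,\axl A^*)[\D_x\Theta(0)]^{-1}$, $A^*$ the O4-minimizer — and checks via the explicit formulas that its energy tends to $\mathcal{J}_0(m,\overline{Q}_{e,0})$. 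A standard mollification and diagonal-sequence argument then upgrades this from smooth data to the general $\varphi_d\in H^1(\Omega_1;\mathbb{R}^3)$ permitted by \eqref{2n5}.

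I expect the genuinely delicate point to be the microrotation recovery sequence: one must exhibit an honestly $\SO(3)$-valued field whose rescaled curvature realizes, pointwise and in the limit, the optimal third column $\axl A^*$ of O4 while the rescaled stretch simultaneously realizes the optimal $\widetilde d^{\,*}$ of O2. For a curved midsurface this is heavier than in the flat case because of the extra factors $[(\D_x\Theta)^\natural(\eta_3)]^{-1}$ and because, by \eqref{c}, $A^*$ depends on $\mathcal{K}_{\overline{Q}_{e,0}}$ through a non-diagonal linear map rather than through a single scalar projection. Since, however, exactly this construction was already carried out in \cite{Maryam} with the homogenized curvature energy kept implicit, the theorem follows by combining that argument with the explicit evaluations \eqref{homocurv}, \eqref{homocurv0} of O3, O4 and the recalled solution of O2 — which is precisely what turns the model into an explicit one.
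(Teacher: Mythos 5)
Your proposal is correct and takes essentially the same approach as the paper: the paper's entire proof of this theorem is the single remark that it is ``completely similar to the proof provided in \cite{Maryam}, where only some implicit properties of the homogenized curvature energy were used and not its explicit form,'' i.e.\ one invokes the $\Gamma$-convergence argument already carried out in \cite{Maryam} (which needed only qualitative features of $\widetilde{W}^{\rm hom}_{\rm curv}$) and substitutes the newly computed explicit solutions of O2, O3 and O4. The compactness/liminf/limsup outline you append is a plausible reconstruction of what that cited argument looks like, but the paper itself does not reproduce it.
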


\begin{proof}The proof is completely similar to the proof provided in \cite{Maryam}, where only some implicit properties of the homogenized curvature energy were used and not its explicit form. \end{proof}

\section{Conclusion}\setcounter{equation}{0}

The present paper gives the explicit calculation of the homogenized curvature energy. This explicit form was not directly necessary in order to prove the following Gamma-convergence result, some qualitative properties of $ \widetilde{W}^{{\rm hom},\natural}_{\rm curv}( \mathcal{K}_{\overline{Q}_{e,h}^\natural})$ and $ \widetilde{W}^{{\rm hom}}_{\rm curv}( \mathcal{K}_{\overline{Q}_{e,0} })$ being enough in the proof of  the $\Gamma$-convergence result. 
However, the final $\Gamma$-convergence model has to be written in a explicit form, all the explicit calculations being provided in this paper.  

A comparison between (\ref{homflat}) and (\ref{homcurv}), shows that the homogenized flat curvature energy can thus be obtained from the curved one, and that Theorem \ref{Theorem_homo_plate} may be seen as a corollary of Theorem \ref{Theorem homo}.
Indeed, let us assume that in the homogenized energy which we obtained in (\ref{homocurv}) we have $\D   \Theta=\id_3, \D  y_0=\id$ and $n_0=[\D  _x\Theta(0)]e_3=e_3$, which corresponds to the flat shell case. Then $\overline{Q}_{e,0} =\overline{R}_{0}$,  $\mathcal{K}_{\overline{Q}_{e,0} }=\mathcal{K}_{\overline{R}_{0}}^{\rm plate}$  and
\begin{align}
\mathcal{K}_{\overline{Q}_{e,0} } = \left(\begin{pmatrix}
{\Gamma _\square}& &0\\
&\hspace*{1.5cm}& 0 \\ \hline
\Gamma _{31}&\Gamma _{32}&0
\end{pmatrix}
\right)[(\D  _x\Theta)^\natural ]^{-1}\,=\mathcal{K}_{\overline{R}_{0}}^{\rm plate},\quad \textrm{with}\quad  \Gamma _\square=\matr{\Gamma _{11}&\Gamma _{12}\\\Gamma _{21}&\Gamma _{22}} 
\end{align}
and we have
\begin{align}\label{homcurv}
W_{\text{curv}}^{\text{hom}}(\Gamma )&=\mu L_c^2\Big(b_1\norm{\sym\Gamma _\square}^2+b_2\norm{\skew \Gamma _\square}^2+\frac{b_1b_3}{(b_1+b_3)}\tr(\Gamma _\square)^2+\frac{2b_1b_2}{b_1+b_2}\dynnorm{\matr{\Gamma _{31}\\ \Gamma _{32}}}^2\Big)\,,
\end{align}
and we rediscover the homogenized curvature energy for an initial flat-configuration from  Theorem \ref{Theorem_homo_plate}.

In conclusion, the present paper completes the calculations of the membrane-like model constructed via $\Gamma$-convergence for flat and curved initial configuration of the shell given for the first time in literature the explicit form of the $\Gamma$-limit for both situations.

In \cite{birsan2020derivation}, by using a method which extends the reduction procedure from classical elasticity to  the case of Cosserat shells, B\^{\i}rsan has obtained a Cosserat-shell by considering a general ansatz. For the particular case of a quadratic ansatz for the deformation map and skipping higher order terms, the membrane term of order $O(h)$ from the Birsan's model \cite{birsan2020derivation} coincides with the homogenized membrane energy determined by us in \cite{Maryam}, i.e., in both models the harmonic mean $\displaystyle \frac{2\mu\,\mu_c}{\mu+\mu_c}$of $\mu$ and $\mu_c$ is present. We note that in the model constructed in \cite{GhibaNeffPartI} the algebraic mean of  $\mu$ and $\mu_c$ substitute the role of the  harmonic mean from the model given in \cite{birsan2020derivation} and by the $\Gamma$-convergence model in \cite{Maryam}.

However,  a comparison between  the curvature energy obtained in the current paper as part of the $\Gamma$-limit and  the curvature energy obtained  using other methods \cite{GhibaNeffPartI,birsan2020derivation}, shows that, the weight of the energy term $\norm{\mathcal{K}_{e,s}^\perp}^2
$ are different as following \begin{itemize}
	\item derivation approach \cite{GhibaNeffPartI}  as well as in the model  given in \cite{birsan2020derivation}: the algebraic mean of $b_1$ and $b_2$, i.e., $\displaystyle\frac{b_1 +  b_2}{2}\,$;
	\item $\Gamma$-convergence:  the harmonic mean of $b_1$ and $b_2$, i.e., $\displaystyle\frac{2\,b_1   b_2}{b_1+b_2}\,$.
\end{itemize}

\begin{footnotesize}

\end{footnotesize}

\begin{thebibliography}{10}
		
		\bibitem{Altenbach-Erem-11}
		H.~Altenbach and V.A.~Eremeyev (editors).
		\newblock {\em Shell-like Structures: Non-classical Theories and
			Applications.}, volume~15 of {\em Advanced Structured Materials}.
		\newblock Springer-Verlag, Berlin Heidelberg, 2011.
		
		\bibitem{Altenbach-Erem-Review}
		J.~Altenbach, H.~Altenbach, and V.A. Eremeyev.
		\newblock On generalized {C}osserat-type theories of plates and shells: a short
		review and bibliography.
		\newblock {\em Archive of Applied Mechanics}, 80:73--92, 2010.
		
		\bibitem{antman2005nonlinear}
		S.~S. Antman.
		\newblock {\em Nonlinear Problems of Elasticity}, volume 107 of {\em Applied
			Mathematical Sciences}.
		\newblock Springer, New York, 2005.
		
		\bibitem{gauthier1909}
		P. Appell.
		\newblock Trait{\'e} de m{\'e}canique rationnelle.
		\newblock {\em Gauthier-Villars, Paris}, 1909.
		
		\bibitem{birsan2019refined}
		M.~B\^irsan, I.D. Ghiba, R.J. Martin, and P. Neff.
		\newblock Refined dimensional reduction for isotropic elastic {Cosserat} shells
		with initial curvature.
		\newblock {\em Mathematics and Mechanics of Solids}, 24(12):4000--4019, 2019.
		
		\bibitem{birsan2020derivation}
		M. B{\^\i}rsan.
		\newblock Derivation of a refined 6-parameter shell model: {D}escent from the
		three-dimensional {C}osserat elasticity using a method of classical shell
		theory.
		\newblock {\em Mathematics and Mechanics of Solids}, 25(6):1318--1339, 2020.
		
		
		\bibitem{birsan2023} M. B\^irsan.  \newblock Derivation of an elastic shell model of order $h^n$ as $n$ tends to infinity.	\newblock {\em Mathematics and Mechanics of Solids}, 10812865221137061, 2023.
		
		\bibitem{borisonfischle} L. 	Borisov, A.  Fischle, and P. Neff. \newblock  Optimality of the relaxed polar factors by a characterization of the set of real square roots of real symmetric matrices.
		\newblock {\em  Zeitschrift f\"ur Angewandte Mathematik und Mechanik}, 99: e201800120, 2019.
		
		\bibitem{boyer2017poincare}
		F. Boyer and F. Renda.
		\newblock Poincar\'e equations for Cosserat media: Application to shells.
		\newblock {\em Journal of Nonlinear Science}, 27(1):1--44, 2017.
		
		\bibitem{cohen1966nonlinear1}
		H. Cohen and C.N.~DeSilva.
		\newblock Nonlinear theory of elastic directed surfaces.
		\newblock {\em Journal of Mathematical Physics}, 7(6):960--966, 1966.
		
		\bibitem{cohen1966nonlinear}
		H. Cohen and C.N.~DeSilva.
		\newblock Nonlinear theory of elastic surfaces.
		\newblock {\em Journal of Mathematical Physics}, 7(2):246--253, 1966.
		
		\bibitem{cohen1989mathematical}
		H. Cohen and C.-C. Wang.
		\newblock A mathematical analysis of the simplest direct models for rods and
		shells.
		\newblock {\em Archive for Rational Mechanics and Analysis}, 108(1):35--81,
		1989.
		
		\bibitem{Cosserat1896}
		E.~Cosserat and F.~Cosserat.
		\newblock Sur la th\'eorie de l' \'elasticit\'e.
		\newblock {\em Annales de la Facult\'e des sciences de Toulouse}, 10:1--116, 1896.
		
		\bibitem{Cosserat08}
		E.~Cosserat and F.~Cosserat.
		\newblock Sur la th\'eorie des corps minces.
		\newblock {\em Comptes rendus de l'Acad\'emie des Sciences}, 146:169--172,
		1908.
		
		\bibitem{Cosserat09b}
		E.~Cosserat and F.~Cosserat.
		\newblock Note sur la th\'eorie de l'action euclidienne.
		\newblock In P.~Appell, editor, {\em Trait\'e de M\'ecanique Rationelle.},
		volume III, pages 557--629. Gauthier-Villars, Paris, 1909.
		
		\bibitem{Cosserat09}
		E.~Cosserat and F.~Cosserat.
		\newblock {\em Th\'eorie des corps d\'eformables.}
		\newblock Hermann et Fils (reprint 2009), (engl. translation by D. Delphenich 2007,
		pdf available at http://www.uni-due.de/$\sim$hm0014/Cosserat\_files/Cosserat09\_eng.pdf) Paris, 1909.
	
		
	
		
	
		
		\bibitem{Pietraszkiewicz04}
		V.A. Eremeyev and W.~Pietraszkiewicz.
		\newblock The nonlinear theory of elastic shells with phase transitions.
		\newblock {\em Journal of Elasticity}, 74:67--86, 2004.
		
	\bibitem{EremeyevPi}
		V.A. Eremeyev and W.~Pietraszkiewicz.
		\newblock Material symmetry group of the non-linear polar-elastic continuum. \newblock {\em International Journal of Solids and Structures}, 49.14, 1993-2005, 2012.
		
		\bibitem{ericksen1957exact}
		J.L.~Ericksen and C.~Truesdell.
		\newblock Exact theory of stress and strain in rods and shells.
		\newblock {\em Archive for Rational Mechanics and Analysis}, 1(1):295--323,
		1957.
		
		\bibitem{gastelneff} A. 	Gastel and Patrizio Neff. \newblock Regularity for a geometrically nonlinear flat Cosserat micropolar membrane shell with curvature,
		\newblock arXiv preprint arXiv:2211.10645,  2022.
		
		\bibitem{GhibaNeffPartI}
		I.~D. Ghiba, M.~B\^{\i}rsan, P.~Lewintan, and P.~Neff.
		\newblock The isotropic {C}osserat shell model including terms up to
		${O}(h^5)$. {Part I: D}erivation in matrix notation.
		\newblock {\em Journal of Elasticity}, 142:201--262, 2020.
		
		\bibitem{GhibaNeffPartII}
		I.~D. Ghiba, M.~{{B\^{\i}rsan}}, P.~Lewintan, and P.~Neff.
		\newblock The isotropic elastic {Cosserat} shell model including terms up to
		order {$O(h^5)$} in the shell thickness. {Part II: Existence} of minimizers.
		\newblock {\em Journal of Elasticity}, 142:263--290, 2020.
		
		\bibitem{GhibaNeffPartIII}
		I.~D.Ghiba, M.~B\^{\i}rsan, P.~Lewintan, and P.~Neff.
		\newblock A constrained Cosserat-shell model including terms up to $o(h^5)$.
		\newblock {\em Journal of Elasticity}, 146(1):83--141, 2021.
		
		\bibitem{GhibaNeffPartV}
		I.~D. Ghiba  and P.~Neff.
		\newblock Linear constrained {C}osserat-shell models including terms up to
		${O}(h^5)$.
		\newblock {\em Zeitschrift f\"ur angewandte Mathematik und Physik},
		74, Article number: 47  2023.
		
		\bibitem{GhibaNeffPartIV}
		I.~D. Ghiba, M.~B\^{\i}rsan, and P.~Neff.
		\newblock A linear {C}osserat-shell model including terms up to ${O}(h^5)$.
		\newblock {\em Journal of Elasticity},
		https://doi.org/10.1007/s10659-022-09981-6, 2023.
		
		
		\bibitem{Naghdi69}
		A.E. Green and P.M. Naghdi.
		\newblock Shells in the light of generalized {C}osserat continua.
		\newblock In F.I. Niordson, editor, {\em Theory of {T}hin {S}hells.}, {IUTAM
			S}ymposium {C}openhagen 1967, pages 39--58. Springer, Heidelberg, 1969.
		
		\bibitem{ghibarizzi} I.D.	Ghiba, G. Rizzi, A. Madeo, and P. Neff.  \newblock Cosserat micropolar elasticity: classical Eringen vs. dislocation form.
		\newblock {\em Journal of Mechanics of Materials and Structures}, 18, 93-123, 2023.
		
		
		\bibitem{green1965general}
		A.~E. Green, P.~M.Naghdi, and W.L.~Wainwright.
		\newblock A general theory of a Cosserat surface.
		\newblock {\em Archive for Rational Mechanics and Analysis}, 20(4):287--308,
		1965.
		
		\bibitem{Koiter69}
		W.T. Koiter.
		\newblock Foundations and basic equations of shell theory. {A} survey of recent
		progress.
		\newblock In F.I. Niordson, editor, {\em Theory of Thin Shells}, IUTAM
		Symposium Copenhagen 1967, pages 93--105. Springer, Heidelberg, 1969.
		
	\bibitem{lankeitneff}	J. Lankeit, P. Neff, and F. Osterbrink. 
		\newblock Integrability conditions between the first and second Cosserat deformation tensor in geometrically nonlinear micropolar models and existence of minimizers.  
			\newblock {\em Zeitschrift f\"ur angewandte Mathematik und Physik}, 68: 1-19, 2017.
		
		\bibitem{Nebel}
		L.J.	Nebel, O. Sander, M. B\^irsan, P. Neff. \newblock A geometrically nonlinear Cosserat shell model for orientable and non-orientable surfaces: Discretization with geometric finite elements. to appear in \newblock {\rm Computer Methods in Applied Mechanics and Engineering},
		\newblock arXiv preprint arXiv:2304.11329, 2023.
		
		
		
		\bibitem{neff2004geometrically}
		P. Neff.
		\newblock A geometrically exact Cosserat shell-model including size effects,
		avoiding degeneracy in the thin shell limit. {Part I: Formal} dimensional
		reduction for elastic plates and existence of minimizers for positive
		{Cosserat} couple modulus.
		\newblock {\em Continuum Mechanics and Thermodynamics}, 16(6):577--628, 2004.
		
		\bibitem{neff2004geometricallyhabil}
		P. Neff.
		\newblock Geometrically exact {Cosserat} theory for bulk behaviour and thin
		structures: Modelling and mathematical analysis.
		\newblock {\em Habilitation Thesis, TU-Darmstadt}, 2004.
		
		\bibitem{neff2007geometrically1}
		P. Neff and K. Chelminski.
		\newblock A geometrically exact {Cosserat} shell-model for defective elastic
		crystals. {J}ustification via {$\Gamma$}-convergence.
		\newblock {\em Interfaces and Free Boundaries}, 9(4):455--492, 2007.
		
		\bibitem{neff2007geometrically}
		P. Neff and S. Forest.
		\newblock A geometrically exact micromorphic model for elastic metallic foams
		accounting for affine microstructure. {Modelling}, existence of minimizers,
		identification of moduli and computational results.
		\newblock {\em Journal of Elasticity}, 87(2-3):239--276, 2007.
		
		
		
		\bibitem{Neff_curl08}
		P. Neff and I. M\"unch.
		\newblock Curl bounds {Grad} on {${\mathrm{SO}}(3)$}.
		\newblock {\em ESAIM: Control, Optimisation and Calculus of Variations},
		14(1):148--159, 2008.
		
	\bibitem{Neff_lankei}P. 	Neff, J. Lankeit, and A.Madeo. 	\newblock On Grioli’s minimum property and its relation to Cauchy’s polar decomposition. 	\newblock {\em  International Journal of Engineering Science}, 80: 209-217, 2014.
	
\bibitem{nefffichle} P. 	Neff,  A.  Fischle, and L. Borisov. 
\newblock Explicit global minimization of the symmetrized Euclidean distance by a characterization of real matrices with symmetric square.
\newblock {\em SIAM Journal on Applied Algebra and Geometry}, 3: 31-43, 2019.
		
		\bibitem{Pietraszkiewicz09}
		W.~Pietraszkiewicz and V.A. Eremeyev.
		\newblock On natural strain measures of the non-linear micropolar continuum.
		\newblock {\em International Journal of Solids and Structures}, 46:774--787,
		2009.
		
		\bibitem{rubin2013cosserat}
		M.~B. Rubin.
		\newblock {\em Cosserat Theories: Shells, Rods and Points}, volume~79.
		\newblock Springer Science \& Business Media, 2013.
		
		\bibitem{Maryam}
		M.~M. Saem, I. D. Ghiba, and P. Neff.
		\newblock A geometrically nonlinear Cosserat (micropolar) curvy shell model via
		Gamma convergence.
		\newblock {\em Journal of Nonlinear Science}, 33: Article number 70,  2023.
		
	\bibitem{Sander}	O. Sander,  P. Neff, and M. B\^irsan. 
	\newblock Numerical treatment of a geometrically nonlinear planar Cosserat shell model.
	\newblock {\em Computational Mechanics}, 57: 817-841, 2016.
		
	\end{thebibliography}
\end{document}